\theoremstyle{plain}
\def\Z{\mathbb{Z}}
\newtheorem{thm}{Theorem}%[section]
\newtheorem{thm1}{Theorem}
\newtheorem{thm2}{Theorem}
\newtheorem{thm3}{Theorem}
\newtheorem{thm5}{Theorem}
\newtheorem{thm6}{Theorem}
\newtheorem{prop}[thm2]{Proposition}
\newtheorem{lem}[thm3]{Lemma}
\newtheorem{Rem}[thm6]{Remark}
\newtheorem{Cor}[thm1]{Corollary}
\newtheorem{ex}[thm5]{Example}
\newtheorem*{theo*}{Theorem}
\theoremstyle{definition}
\newtheorem*{definition*}{Definition}
\begin{document}
\sloppy
\title[On Lie isomorphisms of rings]
{On Lie isomorphisms of rings}
\author
{Oksana Bezushchak}
\address{Oksana Bezushchak: Faculty of Mechanics and Mathematics,
Taras Shevchenko National University of Kyiv, 60, Volodymyrska street, 01033  Kyiv, Ukraine}
\email{bezushchak@knu.ua}
\author{Iryna Kashuba}
\address{Iryna Kashuba: Shenzhen International Center for Mathematics, Southern University of Science and Technology, 1088 Xueyuan Avenue, Nanshan District, Shenzhen, Guangdong, China}
          \email{kashuba@sustech.edu.cn}       
\author{Efim Zelmanov}
\address{Efim Zelmanov: Shenzhen International Center for Mathematics, Southern University of Science and Technology, 1088 Xueyuan Avenue, Nanshan District, Shenzhen, Guangdong, China}
          \email{efim.zelmanov@gmail.com}
\keywords{Automorphism, derivation, idempotent, infinite matrix, graded ring, Lie isomorphism, Lie ring}
\subjclass[2000]{Primary 15B30, 16W10, 17B60; Secondary 16W20, 17B40.}

\begin{abstract}
An associative ring $A$ gives rise to the Lie ring 
$A^{(-)}=(A,[a,b ]=ab-ba)$. The subject of isomorphisms of Lie rings $A^{(-)}$ and $[A,A]$ has attracted considerable  attention in the literature. We prove that if the identity element of $A$ decomposes into a sum of at least three  full orthogonal idempotents, then any isomorphism from the Lie ring $[A,A]$ to the Lie ring $[B,B]$ is standard. 

For non-unital rings, the description is more intricate. Under a certain assumption on idempotents, we extend a Lie isomorphism from  $[A,A]$ to $[B,B]$ to a homomorphism of associative rings $\widehat{A\oplus A^{op}}\to B,$ where $A^{op}=(A,a\cdot b= b\cdot a),$ and $\widehat{A\oplus A^{op}}\to A\oplus A^{op}$ is the universal annihilator extension of the ring $A\oplus A^{op}.$

The results obtained are then applied to the description of automorphisms and  derivations of Lie algebras of infinite matrices.
\end{abstract}

\maketitle
%%%%%%%%%%%%%%%%%%%%%%%%%%%%%%%%%%%%%%%%%%%%%%%%%%%%%%%%%%%

\section{Introduction}

An associative ring $A$ gives rise to the Lie ring 
$A^{(-)}=(A,[\,,\,])$, where $[a,b]=ab-ba$, $a,b\in A$. 
I.~N.~Herstein raised the question of the  extent to which the Lie rings $A^{(-)}$ or the Lie ring of skew-symmetric 
elements with respect to an involution determine the structure
of the associative ring $A$. More precisely,
given another associative ring $B$, I.~N.~Herstein in \cite{Her},  formulated a series of conjectures on Lie isomorphisms 
(i.e., isomorphisms of Lie rings) 
$[A,A]\to [B,B]$ and similar isomorphisms of Lie rings of skew-symmetric elements.

 If $\varphi: A\to B$ is a 
homomorphism of associative rings, then $\varphi$ is also a homomorphism of the Lie rings 
$A^{(-)}\to B^{(-)}$ (i.e., Lie homomorphism). If $\varphi: A\to B$ is an 
anti-homomorphism, i.e., $\varphi(ab)=\varphi(b)\varphi(a)$ for arbitrary elements $a,b\in A$, then  
$-\varphi$ is a homomorphism of the Lie rings 
$A^{(-)}\to B^{(-)}$. 

We call an additive mapping $\varphi: A\to B$ {\it standard} if there exists 
a homomorphism $\psi_1:A\to B$ and an anti-homomorphism $\psi_2: A\to B$ such that: 
\begin{enumerate}
	\item[(i)]  $\psi_1(A) \psi_2(A)=\psi_2(A)\psi_1(A)=(0)$;
	\item[(ii)] $\varphi=\psi_1-\psi_2$.
\end{enumerate}
\noindent Clearly,  a standard mapping $\varphi$ is a homomorphism of the Lie rings $A^{(-)}\to B^{(-)}$.

Let $A^{op}$ be the opposite ring of $A$, i.e., it has the same additive structure, but $a^{op} b^{op}=(ba)^{op}$ for $a$, $b\in A$. Consider the direct sum $A\oplus A^{op}$. The mapping $a\to a\oplus (-a^{op})$, $a\in A$, is an embedding of Lie rings $A^{(-)}\hookrightarrow (A\oplus A^{op})^{(-)}$. It is straightforward to check that an additive mapping $\varphi: A\to B$ is standard if and only if there exists a homomorphism $\chi: A\oplus A^{op}\to B$ of associative rings such that the diagram 

$$
\xymatrix{A\ar[rr]\ar[rrd]_{\varphi}& &
	A\oplus A^{op}\ar[d]^{\chi}
	\\ &&B} \qquad \quad \xymatrix{a\ar[rr]\ar[rrd]& &
	a\oplus(-a^{op})\ar[d]^{\chi}
	\\ &&\varphi(a)}$$
is commutative.

W.~S.~Martindale \cite{Mar} showed that for arbitrary prime unital rings $A$ and $B$ without $2$ and $3$ torsion, 
an isomorphism of Lie rings $A^{(-)}\to B^{(-)}$ is a standard mapping, 
provided that $A$ contains a non-trivial idempotent. In the breakthrough paper \cite{Bresar}, M.~Bre\v sar removed the idempotent assumption. In a subsequent series of papers \cite{BBCM,BBCM2,BBCM3}, K.~Beidar, M.~Bre\v sar, M.~Chebotar,
and W.~S.~Martindale completely proved Herstein's conjectures for prime rings without $2$-torsion.

In this paper, we drop the assumptions on (semi)primeness and additive torsion. Instead, we assume
the existence of sufficiently many pairwise orthogonal full idempotents. 
Recall that an idempotent 
$e\in A$ is called  a {\it full idempotent} if $AeA=A$.
\begin{thm}\label{main}
	Let $A$ be an associative unital ring containing three pairwise orthogonal full idempotents $e_1$, $e_2$, $e_3$ with 
	$e_1+e_2+e_3=1$. Let $B$ be another associative ring. Then an 
	arbitrary Lie isomorphism $[A,A]\to[B,B]$ extends to a standard one.
\end{thm}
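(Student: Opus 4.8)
The plan is to exploit the Peirce decomposition of $A$ relative to $e_1,e_2,e_3$ together with the fact that full idempotents force the off-diagonal Peirce components to generate $A$ as an associative ring. Write $A_{ij}=e_iAe_j$, so that $A=\bigoplus_{i,j}A_{ij}$ with $A_{ij}A_{kl}\subseteq\delta_{jk}A_{il}$. For $i\neq j$ and $a\in A_{ij}$ one has $[e_i,a]=a$, so every off-diagonal component already lies in $[A,A]$; and fullness gives $A_{ii}=e_i(Ae_jA)e_i=A_{ij}A_{ji}$ for any $j\neq i$. Hence $[A,A]$ contains $\sum_{i\neq j}A_{ij}$, and the associative subring of $A$ generated by these off-diagonal parts is all of $A$. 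The operators $\mathrm{ad}\,e_i$ act on $[A,A]$ with eigenvalues in $\{-1,0,1\}$ and equip it with a grading by the root lattice of type $A_2$, the root spaces being the $A_{ij}$, $i\neq j$.

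Next I would transport this structure through $\varphi$. Since $\varphi$ is a Lie isomorphism, it carries the $A_2$-grading of $[A,A]$ to a grading of $[B,B]$ of the same type; the decisive point, supplied by fullness, is that all the data needed to reconstruct $A$ already live inside $[A,A]$, namely the root spaces $A_{ij}$ and their products $A_{ij}A_{ji}=A_{ii}$. I would produce in $B$ (or in the subring generated by $[B,B]$) a system of orthogonal idempotents $f_1,f_2,f_3$ together with target Peirce components $B_{pq}$, and show $\varphi(A_{ij})\subseteq B_{ij}\oplus B_{ji}$ for $i\neq j$, the two summands corresponding respectively to the homomorphic and the anti-homomorphic behaviour.

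I would then define the candidate maps on the generators. For each ordered pair $i\neq j$ split $\varphi|_{A_{ij}}=\varphi_{ij}^{+}-\varphi_{ij}^{-}$ according to the decomposition $B_{ij}\oplus B_{ji}$, intending $\psi_1$ to restrict to $\varphi^{+}$ and $\psi_2$ to $\varphi^{-}$ on off-diagonal parts, and extending to the diagonal blocks via products, e.g. $\psi_1(xy)=\psi_1(x)\psi_1(y)$ and $\psi_2(xy)=\psi_2(y)\psi_2(x)$ for $x\in A_{ij}$, $y\in A_{ji}$. The Lie-isomorphism identity $\varphi([x,y])=[\varphi(x),\varphi(y)]$ applied to brackets $[A_{ij},A_{jk}]\subseteq A_{ik}$ forces these products to match $\varphi$ and determines the orientation, while comparing the two ways of reaching a diagonal block through the triangle $1\to2\to3\to1$ yields the required associativity and consistency relations.

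The main obstacle is exactly this consistency: proving that the product definitions on the diagonal blocks $A_{ii}$ are well defined independently of the chosen factorisation into off-diagonal elements, and that the homomorphic and anti-homomorphic parts glue into a genuine homomorphism $\psi_1:A\to B$ and anti-homomorphism $\psi_2:A\to B$ with $\psi_1(A)\psi_2(A)=\psi_2(A)\psi_1(A)=(0)$ and $\varphi=\psi_1-\psi_2$ on $[A,A]$. This is where the hypothesis of \emph{three} orthogonal full idempotents is essential: with only two blocks the Lie structure cannot distinguish a product from its opposite, so the orientation and the sign remain ambiguous, whereas the triangle relations among three mutually linked blocks over-determine and thereby rigidify the choice. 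Equivalently, I would package the outcome as a single homomorphism $\chi:A\oplus A^{op}\to B$ fitting the commutative diagram of the introduction, so that $\varphi$ extends to the standard map $a\mapsto\chi(a\oplus(-a^{op}))$.
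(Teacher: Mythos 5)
There is a genuine gap, and it sits exactly at the step you flag as ``decisive'': producing orthogonal idempotents $f_1,f_2,f_3$ in $B$ with Peirce components $B_{pq}$ such that $\varphi(A_{ij})\subseteq B_{ij}\oplus B_{ji}$. Nothing in the hypotheses gives you this. $B$ is a completely arbitrary associative ring --- no primeness, no semiprimeness, no torsion restrictions --- and the idempotents $e_i$ themselves do not lie in $[A,A]$ (its diagonal part is only $\sum_{i\neq j}[A_{ij},A_{ji}]$), so $\varphi$ gives you no candidate idempotents to work with. A Lie isomorphism does transport the $A_2$-grading to $[B,B]$ as an abstract Lie-ring grading, but that grading carries no a priori compatibility with the \emph{associative} multiplication of $B$; the decomposition $B_{ij}\oplus B_{ji}$ and the splitting $\varphi|_{A_{ij}}=\varphi_{ij}^{+}-\varphi_{ij}^{-}$ presuppose precisely the associative Peirce structure in $B$ that is the whole difficulty. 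In the classical Martindale-type arguments this step is purchased with primeness (or with extended centroid techniques), which the present theorem deliberately avoids.

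The paper's route is structured so that no idempotent in $B$ is ever constructed. Instead it proves (Proposition~\ref{speciality}, via Lemmas~\ref{product-of-two-in-commutator}--\ref{three-different-zero}) that $\varphi$ is a \emph{specialization}: the associative products $\varphi(A_\alpha)\varphi(A_\beta)$ in $B$ vanish whenever $\alpha+\beta\notin\Delta\cup\{0\}$ --- this is the hard content, extracted from delicate commutator identities such as \eqref{derivation-rel} and the fullness relations $e_i=\sum_\mu a_{ij}^{(\mu)}b_{ji}^{(\mu)}$. It then shows that the universal specialization $U$ of the $A_2$-graded Lie ring $[A,A]$ is isomorphic to $A\oplus A^{op}$ (Section~\ref{section4}, using Lemma~\ref{product-of-two} and the involution $*$ to kill the kernel $I$), so the desired $\chi:A\oplus A^{op}\to B$ exists by pure universality, and $\psi_1(a)=\chi(a\oplus 0)$, $\psi_2(a)=\chi(0\oplus a^{op})$ give the standard decomposition. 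Your final paragraph correctly identifies $A\oplus A^{op}$ as the right packaging, and your heuristic about why two idempotents fail matches the paper's Section~\ref{section5} counterexample in spirit; but as written the proposal assumes, rather than proves, the associative rigidity of $\varphi$'s image, so the central argument is missing. To repair it along the paper's lines you would need to replace the idempotent hunt in $B$ by vanishing theorems for the products $\varphi(A_{ij})\varphi(A_{kl})$ and a universal-object argument.
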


The assumptions of Theorem~\ref{main} are satisfied for matrix rings $A=M_n(R)$, $n\geq 3$, over a unital associative ring $R$. In Section~\ref{section5}, we construct an example showing that Theorem~\ref{main} may not hold for rings containing only two orthogonal full idempotents $e_1$, $e_2$ with $e_1+e_2=1$. 

In \cite{Chebotar}, M.~Chebotar  proved that Lie isomorphisms of matrix rings $M_n(\mathbb{F})$, $n\geq 3$, over an algebraically closed field of characteristic $2$ are standard. With applications to Lie rings of infinite matrices in mind, we also consider Lie isomorphisms of non-unital rings. In this context, the description becomes more intricate.

A Lie ring $L$ is called {\it perfect} if $L=[L,L]$. Let $L$ be a perfect Lie ring. An epimorphism of Lie rings $\theta: L'\to L$
is called a {\it central extension} if the Lie ring $L'$ is also perfect and $\operatorname{Ker}\varphi$ lies in the center of  $L'$. I.~Schur \cite{Schur} (for groups) and H.~Garland \cite{Garland} (for Lie algebras) showed that there exists a unique {\it universal central extension} $u:\widehat{L}\to L$, such that for any central extension $\theta: L'\to L$, there exists a homomorphism $\chi:\widehat{L}\to L'$ making the diagram 
$$
\xymatrix{\widehat{L}\ar[rr]^{\chi}\ar[rrd]_u& &
	L'\ar[d]^{\theta}
	\\ &&L}
$$
commutative.

Let $A$ be an associative ring such that $A=AA$. The subset $\operatorname{Ann}(A)=\{a\in A\,|\, aA=Aa=(0)\}$
is called the {\it annihilator} of $A$. An epimorphism of associative rings $\psi: A'\to A$ is called an {\it annihilator extension}
if $A'=A'A'$ and $\operatorname{Ker}\psi\subseteq \operatorname{Ann} (A')$. In Section~\ref{Lie-section}, 
we show that there exists a unique universal annihilator extension $\omega: \widehat{A}\to A$, such that for any annihilator extension $\psi: A'\to A,$ there exists a homomorphism $\chi: \widehat{A}\to A'$ making the diagram
$$
\xymatrix{\widehat{A}\ar[rr]^{\chi}\ar[rrd]_{\omega}& &
	A'\ar[d]^{\psi}
	\\ &&A}
$$
commutative. If the Lie ring $[A,A]$ is perfect, then the Lie subring $[[\widehat{A},\widehat{A}],[\widehat{A},\widehat{A}]]$
of $\widehat{A}^{(-)}$ is also perfect. The restriction $w:[[\widehat{A},\widehat{A}],[\widehat{A},\widehat{A}]]\to[A,A]$ is a central extension. Hence, there exists a homomorphism 
\begin{equation}\label{gamma}
	\gamma: \widehat{[A,A]}\to [[\widehat{A},\widehat{A}],[\widehat{A},\widehat{A}]] 
\end{equation}
making the diagram 
$$
\xymatrix{ \widehat{[A,A]}\ar[rr]^{\gamma}\ar[rrd]_u& &
	\widehat{A} \ar[d]^{\omega}
	\\ && A}
$$
commutative.

We now state the following theorem.
\begin{thm}\label{main2}
	Let $A$ be an associative ring containing three pairwise orthogonal idempotents $e_1$, $e_2$, $e_3$ such that 
	$Ae_i A=A e_4 A=A$, $1\leq i\leq 3$, where $e_4=1-e_1-e_2-e_3$.  Let $B$ be another associative ring, and let $\varphi:[A,A]\to[B,B]$
	be a Lie isomorphism. Then there exists a unique homomorphism $\chi: \widehat{A\oplus A^{op}}\to B$ making the diagram
	$$
	\xymatrix{ \widehat{[A,A]}\ar[rr]\ar[rrd]_{\varphi\circ u=\widehat{\varphi}}& &
		\widehat{A\oplus A^{op}}\ar[d]^{\chi}
		\\ &&B}
	$$
	commutative.
\end{thm}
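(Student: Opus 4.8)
The plan is to reconstruct the associative multiplication from the Lie bracket locally, along the Peirce decomposition cut out by the idempotents, and then to absorb the unavoidable central indeterminacy by passing to the universal annihilator extension. Write $A_{ij}=e_iAe_j$ for $1\le i,j\le 4$, so that $A=\bigoplus_{i,j}A_{ij}$. For $i\ne j$ every $a\in A_{ij}$ satisfies $a=[e_i,a]$, so the off-diagonal blocks lie in $[A,A]$; and for pairwise distinct $i,j,k$ one has $A_{jk}A_{ij}=(0)$, whence $[a,b]=ab$ for $a\in A_{ij}$, $b\in A_{jk}$. Thus the Lie product already records the associative product between off-diagonal blocks, which is the rigidity on which everything rests. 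The fullness $Ae_iA=A$ forces each block to be a sum of such products, so $[A,A]$ is perfect, and $[B,B]\cong[A,A]$ is perfect as well; this legitimizes the universal central extension $u:\widehat{[A,A]}\to[A,A]$ and, via the antidiagonal embedding $[A,A]\hookrightarrow[A\oplus A^{op},A\oplus A^{op}]$ together with \eqref{gamma} applied to $A\oplus A^{op}$, the canonical top arrow $\widehat{[A,A]}\to\widehat{A\oplus A^{op}}$ of the diagram.

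The core step is to extract from $\varphi$ the two ingredients of a standard map: a homomorphism $\psi_1$ and an anti-homomorphism $\psi_2$ with orthogonal images, agreeing with $\varphi$ on $[A,A]$ up to the annihilator. I would first reconstruct, inside a suitable multiplier extension of $B$, a system of orthogonal idempotents compatible with $\varphi$ out of the bracket relations among the images $\varphi(A_{ij})$; this yields a Peirce grading $B=\bigoplus B_{ij}$ and block maps carrying $A_{ij}$ into $B_{ij}$ or into $B_{ji}$. Comparing $\varphi([a,b])=[\varphi(a),\varphi(b)]$ with the product-recovery identity on triples $a\in A_{ij}$, $b\in A_{jk}$, $c\in A_{kl}$ of pairwise distinct indices then shows that on each block $\varphi$ is either multiplicative or anti-multiplicative, and — this is where the fourth full idempotent $e_4$ is spent — the extra index furnishes enough independent triangles to prove that these two behaviours are globally coherent and split off as orthogonal pieces $\psi_1,\psi_2$ with $\psi_1(A)\psi_2(A)=\psi_2(A)\psi_1(A)=(0)$. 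Equivalently, $\varphi$ assembles into a single associative homomorphism $\chi_0$ from the subring of $A\oplus A^{op}$ generated by the off-diagonal blocks into $B$.

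It then remains to extend $\chi_0$ to $\chi:\widehat{A\oplus A^{op}}\to B$. The diagonal blocks $A_{ii}$ are not themselves in $[A,A]$ and are recovered only through brackets $[A_{ij},A_{ji}]$, so their images are determined merely modulo terms that annihilate everything in sight; this is precisely the ambiguity the universal annihilator extension is designed to carry. I would check that the reconstruction respects the defining relations of $\widehat{A\oplus A^{op}}$ and invoke its universal property to obtain $\chi$; the diagram commutes on the generating off-diagonal blocks and hence everywhere, since $\widehat{[A,A]}$ is generated by their canonical lifts, while uniqueness is immediate because $\widehat{A\oplus A^{op}}=\widehat{A\oplus A^{op}}\cdot\widehat{A\oplus A^{op}}$ is generated by the images of the off-diagonal blocks, on which $\chi$ is pinned down by $\widehat{\varphi}$. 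The main obstacle I anticipate is exactly the global coherence of the second step: proving that the per-block dichotomy between homomorphic and anti-homomorphic behaviour is consistent across all pairs of indices and that the cross terms $\psi_1(A)\psi_2(A)$ vanish. This requires delicate Peirce bookkeeping and is the structural reason the hypothesis demands not only three orthogonal idempotents but also a full complementary idempotent $e_4$, so that every relevant product can be triangulated through both a third and a fourth index.
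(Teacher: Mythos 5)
There is a genuine gap, and it sits at both of the places your plan defers. First, the technical heart of the theorem is the specialization property: $\varphi(A_{\alpha_1})\cdots\varphi(A_{\alpha_m})=(0)$ whenever $\alpha_1+\dots+\alpha_m\notin\Delta\cup\{0\}$ (Proposition~\ref{speciality}). You propose to get this by reconstructing a system of orthogonal idempotents in a multiplier extension of $B$ and proving a per-block (anti-)multiplicativity dichotomy, but you yourself flag the global coherence of that dichotomy as an ``anticipated obstacle'' --- that obstacle \emph{is} the proof. Note also that $B$ carries no idempotents a priori: the bracket only hands you the elements $\varphi(h_{ij})$, not idempotents of $B$, and the paper never reconstructs any; instead it proves the vanishing directly by Peirce bookkeeping on the $A$ side (products of two factors land in $[B,B]$; the $\varphi$-preimages of $\varphi(A_{ij})\varphi(A_{ij})$ and $\varphi(A_{ij})\varphi(A_{ik})$ are pushed into $Diag(A)$ and then killed, Lemmas~\ref{commutator-belongs-diagonal}--\ref{three-different-zero}). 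Without this, nothing downstream can start.

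Second, your final step has the logical direction reversed. The subring of $A\oplus A^{op}$ generated by the antidiagonal images of the off-diagonal blocks is \emph{all} of $A\oplus A^{op}$ (since $A_{ik}=A_{ij}A_{jk}$; this is the paper's generation lemma in Section~\ref{section4}), so your $\chi_0$ would be a genuine homomorphism $A\oplus A^{op}\to B$, i.e.\ $\varphi$ would extend to a standard map --- precisely what can fail in the non-unital setting, which is why $\widehat{A\oplus A^{op}}$ appears in the statement and why Corollary~\ref{corollary-main2} needs $\operatorname{Ann}(\langle[B,B]\rangle)=(0)$ to descend. ``Extending'' a map from the quotient $A\oplus A^{op}$ to the cover $\widehat{A\oplus A^{op}}$ would be trivial (compose with the projection); the actual content is a factorization through the cover, because the defining relations of $A\oplus A^{op}$ need not hold among the images in $B$. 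The paper supplies this by constructing the universal specialization $u:[A,A]\to U$ (so $\varphi$, being a specialization, factors as $\psi:U\to B$ for free), and then proving the hard Proposition~\ref{annihilator_extension}: $\tau:U\to A\oplus A^{op}$ is an annihilator extension --- using the involution $*$ on $U$, the symmetry of kernel elements (Lemma~\ref{symmetric}), and the unital corner $U'$ with $e=u(e_1+e_2+e_3)$. Universality of $\widehat{A\oplus A^{op}}$ then gives $\chi_1:\widehat{A\oplus A^{op}}\to U$ and $\chi=\psi\circ\chi_1$. Your plan contains no substitute for this step; ``check that the reconstruction respects the defining relations of $\widehat{A\oplus A^{op}}$'' is exactly where all the work lies. (Your uniqueness argument, by contrast, is correct and is the paper's: the images of the off-diagonal blocks generate $\widehat{A\oplus A^{op}}$, and $\widehat{\varphi}$ pins $\chi$ down there.)
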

\begin{Rem} We do not assume that the ring $A$ contains $1$. The idempotent $e_4$ lies in the unital hull $A+{\mathbb Z}\cdot 1$. 
\end{Rem}

 Let $\langle[B,B]\rangle$ be the subring of  $B$ generated by $[B,B]$.
\begin{Cor}\label{corollary-main2} If $\operatorname{Ann}(\langle[B,B]\rangle)=(0)$, then any  Lie isomorphism 
	$[A,A]\to[B,B]$ extends to a standard homomorphism.
\end{Cor}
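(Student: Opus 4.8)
The plan is to promote the homomorphism $\chi$ produced by Theorem~\ref{main2} to a homomorphism defined on $A\oplus A^{op}$ itself, and then to recognize the latter as the datum of a standard map. Write $R=A\oplus A^{op}$, let $\omega:\widehat R\to R$ be the universal annihilator extension, let $t:\widehat{[A,A]}\to\widehat R$ denote the top arrow of the diagram of Theorem~\ref{main2} (built from $\gamma$ and $u$ so that $\omega\circ t=\iota\circ u$, where $\iota:[A,A]\hookrightarrow R$ is the Lie embedding $c\mapsto c\oplus(-c^{op})$), and set $S=\langle[B,B]\rangle$. The whole argument hinges on identifying the image $\chi(\widehat R)$ with $S$: once this is done, the hypothesis $\operatorname{Ann}(S)=(0)$ forces $\chi$ to kill $\operatorname{Ker}\omega$, and $\chi$ descends through $\omega$.

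First I would record two structural facts supplied by the idempotents. Using the Peirce decomposition of $A$ relative to $e_1,e_2,e_3,e_4$, every off-diagonal component $e_iAe_j$ with $i\neq j$ lies in $[A,A]$: choosing a third index $k$ and using fullness $Ae_kA=A$ to write $x=e_ixe_j=\sum_s(e_ia_se_k)(e_kb_se_j)$, each summand is the commutator $[e_ia_se_k,\,e_kb_se_j]$, since the reversed product vanishes. Fullness $Ae_jA=A$ then writes each diagonal component $e_iAe_i$ as a sum of products of off-diagonal components, so $A=\langle[A,A]\rangle$; the same holds for $A^{op}$, whence $R=\langle[R,R]\rangle$ and $R=RR$ (so $\widehat R$ exists). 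I would then lift this to $\widehat R$: since $\omega(\langle[\widehat R,\widehat R]\rangle)=\langle[R,R]\rangle=R$, one has $\widehat R=\langle[\widehat R,\widehat R]\rangle+\operatorname{Ker}\omega$, and because $\operatorname{Ker}\omega\subseteq\operatorname{Ann}(\widehat R)$ while $\widehat R=\widehat R\widehat R$, squaring this sum kills every term involving $\operatorname{Ker}\omega$ and yields $\widehat R=\langle[\widehat R,\widehat R]\rangle$.

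With that in hand I would compute $\chi(\widehat R)$ from both sides. Applying $\chi$ to $\widehat R=\langle[\widehat R,\widehat R]\rangle$ gives $\chi(\widehat R)=\langle[\chi(\widehat R),\chi(\widehat R)]\rangle\subseteq\langle[B,B]\rangle=S$; conversely the commutativity of the diagram of Theorem~\ref{main2} gives $\chi(t(\widehat{[A,A]}))=\varphi(u(\widehat{[A,A]}))=\varphi([A,A])=[B,B]$, so $S\subseteq\chi(\widehat R)$. Hence $\chi(\widehat R)=S$. Now for $k\in\operatorname{Ker}\omega\subseteq\operatorname{Ann}(\widehat R)$ we get $\chi(k)\,\chi(\widehat R)=\chi(k\widehat R)=(0)$ and symmetrically on the other side, so $\chi(k)$ annihilates $\chi(\widehat R)=S$ on both sides while lying in $S$; the hypothesis $\operatorname{Ann}(S)=(0)$ forces $\chi(k)=0$. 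Thus $\operatorname{Ker}\omega\subseteq\operatorname{Ker}\chi$, and $\chi$ factors uniquely as $\chi=\chi_0\circ\omega$ with $\chi_0:R\to B$ a homomorphism of associative rings.

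Finally I would read off the standard map. By the equivalence recorded in the introduction, the homomorphism $\chi_0:A\oplus A^{op}\to B$ is exactly the datum of a standard additive map $\Phi:A\to B$, namely $\Phi(a)=\chi_0(a\oplus(-a^{op}))=\psi_1(a)-\psi_2(a)$ with $\psi_1(a)=\chi_0(a\oplus 0)$ a homomorphism and $\psi_2(a)=\chi_0(0\oplus a^{op})$ an anti-homomorphism whose images annihilate one another. To see that $\Phi$ extends $\varphi$, I would combine $\omega\circ t=\iota\circ u$ with $\chi=\chi_0\circ\omega$ and $\chi\circ t=\varphi\circ u$, obtaining $\chi_0\circ\iota\circ u=\varphi\circ u$; surjectivity of $u$ then gives $\Phi|_{[A,A]}=\chi_0\circ\iota=\varphi$. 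The main obstacle is the identification $\chi(\widehat R)=S$, and within it the non-obvious point that $\widehat R$ is generated by its own commutators; this is exactly where both the full-idempotent hypothesis (through $R=\langle[R,R]\rangle$) and the annihilator-extension properties ($\operatorname{Ker}\omega\subseteq\operatorname{Ann}(\widehat R)$ and $\widehat R=\widehat R\widehat R$) enter, and it is what makes the assumption $\operatorname{Ann}(\langle[B,B]\rangle)=(0)$ bite.
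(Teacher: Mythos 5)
Your proof is correct and takes essentially the same route as the paper: invoke Theorem~\ref{main2} to get $\chi:\widehat{A\oplus A^{op}}\to B$, observe that $\chi$ kills $\operatorname{Ker}$ of the universal annihilator extension because its image would land in $\operatorname{Ann}(\langle[B,B]\rangle)=(0)$, descend to a homomorphism $A\oplus A^{op}\to B$, and read off the standard map as in the introduction. The only difference is that you carefully justify the step the paper dismisses with ``Clearly'' --- namely that $\chi(\widehat{A\oplus A^{op}})=\langle[B,B]\rangle$, via $\widehat{A\oplus A^{op}}=\langle[\widehat{A\oplus A^{op}},\widehat{A\oplus A^{op}}]\rangle$ --- which is a sound elaboration of the same argument, not a different one.
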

\begin{thm}\label{main3}
	Let $A$ be an associative ring either satisfying the assumptions of Theorem~\ref{main} or  
	Theorem~\ref{main2} with  $\operatorname{Ann}(A)=(0)$. Then any derivation of the Lie ring 
	$[A,A]$ extends to a derivation of the associative ring $A$.
\end{thm}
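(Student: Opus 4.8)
The plan is to reduce the statement to Theorems~\ref{main} and~\ref{main2} by the classical device of dual numbers, which converts a derivation into an automorphism. Let $D$ be a derivation of the Lie ring $[A,A]$ and set $\tilde A = A\otimes_{\mathbb Z}\mathbb Z[\epsilon]/(\epsilon^2) = A\oplus A\epsilon$, where $\epsilon$ is central and $\epsilon^2=0$. Since $[\tilde A,\tilde A] = [A,A]\oplus[A,A]\epsilon$, I would first define the additive map $\varphi\colon [\tilde A,\tilde A]\to[\tilde A,\tilde A]$ by $\varphi(x+y\epsilon)=x+(D(x)+y)\epsilon$ for $x,y\in[A,A]$. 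A direct computation using the derivation identity $D([x,y])=[D(x),y]+[x,D(y)]$ shows that $\varphi$ is a Lie homomorphism, and the formula $x+y\epsilon\mapsto x+(y-D(x))\epsilon$ provides a two-sided inverse, so $\varphi$ is a Lie automorphism of $[\tilde A,\tilde A]$.

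Next I would check that $\tilde A$ inherits the hypotheses of the relevant theorem. The idempotents $e_1,e_2,e_3$ (and $e_4$) remain pairwise orthogonal idempotents in $\tilde A$, and they stay full because $\tilde A e_i\tilde A\supseteq Ae_iA\oplus (Ae_iA)\epsilon = A\oplus A\epsilon = \tilde A$, while $e_1+e_2+e_3=1$ persists in the unital case. In the setting of Theorem~\ref{main2} one computes $\operatorname{Ann}(\tilde A)=\operatorname{Ann}(A)\otimes\mathbb Z[\epsilon]/(\epsilon^2)=(0)$ from $\operatorname{Ann}(A)=(0)$, and $\tilde A=\tilde A\tilde A$ follows from $A=AA$. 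Applying Theorem~\ref{main} (respectively Corollary~\ref{corollary-main2}, whose hypothesis $\operatorname{Ann}(\langle[\tilde A,\tilde A]\rangle)=(0)$ is guaranteed by the idempotent conditions together with $\operatorname{Ann}(A)=(0)$) to the Lie automorphism $\varphi$, I obtain a standard extension $\varphi=\psi_1-\psi_2$, where $\psi_1\colon\tilde A\to\tilde A$ is a ring homomorphism, $\psi_2\colon\tilde A\to\tilde A$ an anti-homomorphism, and $\psi_1(\tilde A)\psi_2(\tilde A)=\psi_2(\tilde A)\psi_1(\tilde A)=(0)$.

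The heart of the argument, and the step I expect to be the main obstacle, is to extract an associative derivation from this standard extension by passing to the $\epsilon$-linear part. Reducing modulo the square-zero ideal $A\epsilon$ (the kernel of $\pi\colon\tilde A\to A$), the orthogonal central idempotents $z_1=\psi_1(1)$, $z_2=\psi_2(1)$ split $A=\bar z_1A\oplus\bar z_2 A$, and $\bar\varphi=\operatorname{id}_{[A,A]}=\bar\psi_1-\bar\psi_2$. On the factor $\bar z_2A$ the identity Lie automorphism would coincide with $-\bar\psi_2$ for an anti-automorphism $\bar\psi_2$; since $\bar z_2A$ again carries three full orthogonal idempotents $f_i=e_i\bar z_2$, I would choose $a\in f_1\bar z_2Af_2$ and $b\in f_2\bar z_2Af_3$ with $ab\neq0$ (available because fullness forces $f_1\bar z_2Af_3\neq(0)$). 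Then $a=[f_1,a]$, $b=[f_2,b]$ and $ab=[f_1,ab]$ all lie in $[\bar z_2A,\bar z_2A]$, so $-ab=\bar\psi_2(ab)=\bar\psi_2(b)\bar\psi_2(a)=(-b)(-a)=ba=0$, a contradiction; hence $\bar z_2=0$. Consequently $\psi_2(\tilde A)\subseteq A\epsilon$, and since $A\epsilon$ is square-zero while $\psi_2$ is an anti-homomorphism on $\tilde A=\tilde A\tilde A$, we conclude $\psi_2=0$, so $\varphi=\psi_1$ is a ring automorphism of $\tilde A$ reducing to $\operatorname{id}_A$.

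Finally, from $\psi_1\equiv\operatorname{id}\pmod{A\epsilon}$ one checks $\psi_1(\epsilon)\in A\epsilon$, hence $\psi_1(A\epsilon)\subseteq A\epsilon$, so for $a\in A$ one may write $\psi_1(a)=a+\bar D(a)\epsilon$ with a well-defined additive map $\bar D\colon A\to A$. Expanding the homomorphism identity $\psi_1(ab)=\psi_1(a)\psi_1(b)$ and comparing $\epsilon$-coefficients yields $\bar D(ab)=\bar D(a)b+a\bar D(b)$, so $\bar D$ is a derivation of the associative ring $A$, and comparing $\epsilon$-coefficients in $\varphi(x)=\psi_1(x)$ for $x\in[A,A]$ gives $\bar D|_{[A,A]}=D$. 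The delicate points I would treat with care are exactly the vanishing of the anti-homomorphism component $\psi_2$ and the invariance of the ideal $A\epsilon$ under $\psi_1$, both of which rest on the abundance of full idempotents supplied by the hypotheses.
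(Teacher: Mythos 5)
Your proposal shares the paper's first half exactly: the dual numbers $A'=A+A\varepsilon$, $\varepsilon^2=0$, the Lie automorphism $x+y\varepsilon\mapsto x+(d(x)+y)\varepsilon$ of $[A',A']$, the verification that $A'$ inherits the hypotheses, and the appeal to Theorem~\ref{main} and Remark~\ref{rem-ann} to obtain a standard extension $\varphi=\psi_1-\psi_2$. After that you genuinely diverge. The paper never shows that $\psi_2$ vanishes: instead it proves that every $a\in A$ admits a presentation $a=\sum_i a_{i,1}\cdots a_{i,n_i}$ with all $a_{i,j}\in[A,A]$, all $n_i$ \emph{even}, and $\sum_i a_{i,n_i}\cdots a_{i,1}=0$, defines $\tilde d$ directly by the Leibniz rule on such presentations, and uses the standard form of $\varphi$ only to check well-definedness --- the even length makes the anti-homomorphism contribute the reversed products with a plus sign, and these die by the hypothesis on the presentation. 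You instead prove the stronger structural fact that the anti-component vanishes outright, so $\varphi=\psi_1$ is a ring endomorphism reducing to the identity modulo $A\varepsilon$, and the derivation is its $\varepsilon$-coefficient. Your route is conceptually cleaner and yields extra information (the automorphism $\operatorname{Id}+\varepsilon d$ restricted to $[A',A']$ is actually multiplicative); the paper's route is more robust precisely because it does not need $\psi_2=0$ and therefore runs uniformly in the unital and non-unital cases.

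Two points in your argument need repair, both concentrated in the step you yourself flag as delicate. First, in the setting of Theorem~\ref{main2} the ring $A$, hence $\tilde A$, need not be unital, so $z_1=\psi_1(1)$ and $z_2=\psi_2(1)$ are undefined and your splitting $A=\bar z_1A\oplus\bar z_2A$ does not parse as written (even in the unital case, centrality of the $z_i$ and $z_1+z_2=1$ require the observation that $[\tilde A,\tilde A]$ generates $\tilde A$, which you do not record; similarly your claim $f_1\bar z_2Af_3\neq(0)$ needs the one-line computation $f_1B'f_3=(f_1B'f_2)(f_2B'f_3)$ and $f_1B'f_3=(0)\Rightarrow B'=B'f_1B'f_3B'=(0)$). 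The gap is repairable without any idempotents: set $\bar\psi_i=\pi\circ\psi_i$, which are a homomorphism and an anti-homomorphism with orthogonal images \emph{regardless} of whether $A\varepsilon$ is invariant; then for $x\in A_{ij}$, $y\in A_{jk}$ with $i,j,k$ distinct, orthogonality gives $xy=\bar\psi_1(xy)+\bar\psi_2(x)\bar\psi_2(y)$, whence $\bar\psi_2(xy)=-\bar\psi_2(x)\bar\psi_2(y)=-\bar\psi_2(yx)=0$ since $yx\in A_{jk}A_{ij}=(0)$; as $A_{ik}=A_{ij}A_{jk}$ and $\cup_{i\neq j}A_{ij}$ generates $A$, this forces $\bar\psi_2=0$, i.e., $\psi_2(\tilde A)\subseteq A\varepsilon$, and then $\psi_2(\tilde A)=\psi_2(\tilde A)\psi_2(\tilde A)=(0)$ exactly as you argue. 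Second, there is a mild circularity in your order of steps: you reduce $\psi_1,\psi_2$ modulo $A\varepsilon$ before establishing that $\psi_1$ preserves $A\varepsilon$, yet you propose to deduce that invariance from $\psi_1\equiv\operatorname{id}$, which presupposes the reduction. Composing with $\pi$ as above removes the circularity; once $\psi_2=0$, one has $\psi_1(y\varepsilon)=\varphi(y\varepsilon)=y\varepsilon$ for $y\in[A,A]$, and multiplicativity together with $A=\langle[A,A]\rangle$ propagates this to $\psi_1(A\varepsilon)\subseteq A\varepsilon$, after which your extraction of $\bar D$ and the identity $\bar D|_{[A,A]}=d$ go through verbatim.
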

In \cite{Bezushchak}, we studied Lie automorphisms and Lie derivations of locally matrix algebras of characteristic $\neq 2$; see \cite{BezOl_2,Bez_Ol__Sush}. Automorphisms and derivations of classical Lie algebras of infinite matrices over fields of characteristic $\neq 2$ were 
described in \cite{Bezushchak2}. Using Theorems~\ref{main} and \ref{main2}, we can drop the characteristic assumption.

Let us recall the necessary definitions. Let $I$ be an infinite set, and let $\mathbb{F}$ be a field. The algebra $M_{\infty}(I,\mathbb{F})$
consists of $I\times I$ matrices over $\mathbb{F}$ with only many nonzero entries.  We also consider  the Lie algebra: 
$$
\mathfrak{sl}_{\infty}(I,\mathbb{F})=[M_{\infty}(I,\mathbb{F}),M_{\infty}(I,\mathbb{F})].
$$ Let $M_{rcf}(I,\mathbb{F})$ denote the algebra of $I\times I$ matrices having finitely many nonzero entries 
in each row and column. 
Let $GL_{rcf}(I,\mathbb{F})$ denote the group of invertible elements of  $M_{rcf}(I,\mathbb{F})$. Theorem~\ref{main2} implies:
\begin{Cor}\label{main4}$[$see Theorem $6$ in \cite{Bezushchak2}$]$
	Let $\mathbb{F}$ be an arbitrary field. Any automorphism of the Lie algebra $L=\mathfrak{sl}_{\infty}(I,\mathbb{F})$ 
	is of the form
	$$\varphi(a)=x^{-1} a x, \ a\in L, \quad \text{or} \quad \varphi(a)=-x^{-1}a^t x, \ a\in L,$$ where $x\in GL_{rcf}(I,\mathbb{F})$
	and $a^t$ is the transpose of $a$.
\end{Cor}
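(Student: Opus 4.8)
The plan is to apply Theorem~\ref{main2}, in the streamlined form of Corollary~\ref{corollary-main2}, with $A=B=M_\infty(I,\mathbb{F})$, and then exploit the simplicity of this ring together with the known description of its (anti-)auto\-morphisms. First I would verify the hypotheses. Fixing three distinct indices $1,2,3\in I$, I set $e_i=E_{ii}$ for $i=1,2,3$; these are pairwise orthogonal idempotents lying in $A=M_\infty(I,\mathbb{F})$, and since $E_{ji}E_{ii}E_{ik}=E_{jk}$ for all $j,k$, we get $Ae_iA=A$. For $e_4=1-e_1-e_2-e_3$ (which lies in the unital hull $A+\mathbb{Z}\cdot 1$ and is the diagonal projection onto $I\setminus\{1,2,3\}$) we have $e_4E_{44}=E_{44}$, so $E_{44}\in Ae_4A$ and hence $Ae_4A\supseteq AE_{44}A=A$. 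Thus the hypotheses of Theorem~\ref{main2} hold, and moreover $\operatorname{Ann}(A)=(0)$, since $aE_{jj}=0$ for all $j$ forces $a=0$.

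Next I would compute $\langle[B,B]\rangle$. Because $I$ is infinite, $E_{ij}=[E_{ik},E_{kj}]\in[A,A]$ for $i\neq j$ (choosing $k\neq i,j$), and $E_{ii}-E_{jj}=[E_{ij},E_{ji}]\in[A,A]$; hence $[A,A]$ consists of the finitely supported trace-zero matrices in every characteristic. Since $E_{ii}=E_{ij}E_{ji}$ is a product of two elements of $[A,A]$, the subring $\langle[B,B]\rangle=\langle[A,A]\rangle$ contains every matrix unit, so $\langle[B,B]\rangle=A$ and therefore $\operatorname{Ann}(\langle[B,B]\rangle)=(0)$. Corollary~\ref{corollary-main2} then guarantees that the given Lie automorphism $\varphi$ extends to a standard homomorphism $\Phi=\psi_1-\psi_2\colon A\to A$, where $\psi_1$ is a ring homomorphism, $\psi_2$ a ring anti-homomorphism, and $\psi_1(A)\psi_2(A)=\psi_2(A)\psi_1(A)=(0)$.

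The third step is to collapse this standard decomposition using the simplicity of $A=M_\infty(I,\mathbb{F})$. Since $\varphi$ is onto $[A,A]$, we obtain $[A,A]\subseteq\psi_1(A)+\psi_2(A)$; as $\langle[A,A]\rangle=A$ and $\psi_1(A)+\psi_2(A)$ is already a subring (the cross products vanish), it follows that $A=\psi_1(A)+\psi_2(A)$. The vanishing of cross products then shows that each $\psi_i(A)$ is a two-sided ideal of $A$, so by simplicity each equals $(0)$ or $A$, and they cannot both equal $A$ (otherwise $\psi_1(A)\psi_2(A)=A^2=A\neq 0$). Hence exactly one of $\psi_1,\psi_2$ is nonzero, giving either an automorphism $\psi_1$ with $\varphi(a)=\psi_1(a)$ or an anti-automorphism $\psi_2$ with $\varphi(a)=-\psi_2(a)$ for $a\in[A,A]$. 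Because $\varphi$ is $\mathbb{F}$-linear on $[A,A]$ and $[A,A]$ generates $A$, the surviving map is $\mathbb{F}$-linear.

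Finally I would invoke the description of the $\mathbb{F}$-algebra automorphisms of $M_\infty(I,\mathbb{F})$: every such automorphism has the form $a\mapsto x^{-1}ax$ with $x\in GL_{rcf}(I,\mathbb{F})$, and every anti-automorphism is the composite of the transpose with an automorphism, hence of the form $a\mapsto x^{-1}a^t x$. Substituting into $\varphi=\psi_1|_{[A,A]}$ or $\varphi=-\psi_2|_{[A,A]}$ yields the two stated forms. I expect the main obstacle to be exactly this last classification: reconstructing from the abstract algebra automorphism $\psi_1$ a conjugating matrix and proving that it, together with its inverse, is row- and column-finite (i.e.\ lies in $GL_{rcf}(I,\mathbb{F})$). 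This forces a careful analysis of the images $\psi_1(E_{ii})$ of the diagonal idempotents and of the associated change of basis, and it is the part that is cited from Theorem~$6$ of \cite{Bezushchak2}.
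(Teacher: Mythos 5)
Your proposal is correct and takes essentially the same route the paper intends: the paper gives no separate proof, deriving the corollary from Theorem~\ref{main2} (in the form of Corollary~\ref{corollary-main2}, equivalently Remark~\ref{rem-ann}, since $\langle[A,A]\rangle=M_\infty(I,\mathbb{F})$) combined with the classification of automorphisms and anti-automorphisms of $M_\infty(I,\mathbb{F})$ cited from \cite{Bezushchak2}, which is exactly your argument with the idempotent hypothesis checks, the simplicity dichotomy for $\psi_1,\psi_2$, and the $\mathbb{F}$-linearity remark filled in. One cosmetic fix: justify $E_{44}\in Ae_4A$ by writing $E_{44}=E_{44}\,e_4\,E_{44}$ rather than just $e_4E_{44}=E_{44}$.
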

Regarding derivations, the most general result is:
\begin{Cor}\label{main5}$[$see Theorem $1$ in \cite{Bezushchak_Res_Math}$]$
	Let $R$ be an arbitrary associative ring with $1$. Any derivation of the Lie ring 
	$\mathfrak{sl}_{\infty}(I,R)$  is of the form $d=ad(a)+u$, where
	$a\in M_{rcf}(I,R)$, and $u$ is a derivation of the ring $R$.
\end{Cor}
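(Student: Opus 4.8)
The plan is to realize $\mathfrak{sl}_\infty(I,R)$ as the commutator Lie ring $[A,A]$ of the associative ring $A=M_\infty(I,R)$, lift the given Lie derivation to an associative derivation by Theorem~\ref{main3}, and then describe associative derivations of $M_\infty(I,R)$ directly. Throughout I write $E_{ij}(r)$ for the matrix with $r\in R$ in position $(i,j)$, set $E_{ij}=E_{ij}(1)$, so that $E_{ij}(r)E_{kl}(s)=\delta_{jk}E_{il}(rs)$, and write $[X]_{pq}$ for the $(p,q)$-entry of a matrix $X$. First I would verify that $A=M_\infty(I,R)$ satisfies the hypotheses of Theorem~\ref{main2} together with $\operatorname{Ann}(A)=(0)$. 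Choosing $e_1=E_{11}$, $e_2=E_{22}$, $e_3=E_{33}$, these are pairwise orthogonal idempotents of $A$; each is full, since $E_{ki}E_{ii}E_{il}=E_{kl}$ gives $Ae_iA=A$, and, choosing any index $p\notin\{1,2,3\}$, the identity $E_{mp}e_4E_{pn}=E_{mn}$ gives $Ae_4A=A$ for $e_4=1-e_1-e_2-e_3$ in the unital hull. The identities $E_{ij}=E_{ik}E_{kj}$ give $A=AA$, and a finitely supported matrix annihilating all $E_{kl}$ on one side must be zero, so $\operatorname{Ann}(A)=(0)$. Hence Theorem~\ref{main3} applies, and the given derivation of $[A,A]=\mathfrak{sl}_\infty(I,R)$ extends to a derivation $D$ of the associative ring $A$.

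It then remains to show that every derivation $D$ of $M_\infty(I,R)$ has the form $D=\operatorname{ad}(a)+\bar u$, where $a\in M_{rcf}(I,R)$ and $\bar u$ applies a derivation $u$ of $R$ entrywise. I would normalize $D$ in two steps. With $e_k=E_{kk}$, the Leibniz identity $D(e_k)=D(e_k)e_k+e_kD(e_k)$ forces $e_kD(e_k)e_k=0$, i.e.\ $[D(e_k)]_{kk}=0$, while applying the same to the idempotent $e_k+e_l$ forces $[D(e_k)]_{lk}=-[D(e_l)]_{lk}$. These compatibilities let me define a matrix $a$ by $a_{lk}=[D(e_k)]_{lk}$ for $l\neq k$, for which $\operatorname{ad}(a)(e_k)=D(e_k)$. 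The crucial point is that $a\in M_{rcf}(I,R)$: column $k$ of $a$ is finite because $D(e_k)$ has finite support, and row $k$ of $a$ is finite for the same reason, since $a_{kl}=[D(e_l)]_{kl}=-[D(e_k)]_{kl}$. This is exactly where row--column finiteness, rather than finite support, becomes unavoidable, and I expect the finiteness bookkeeping here to be the main obstacle to organize cleanly.

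After replacing $D$ by $D-\operatorname{ad}(a)$ I may assume $D(e_k)=0$ for all $k$. Then $E_{ij}=e_iE_{ij}e_j$ yields $D(E_{ij})=E_{ij}(t_{ij})$ for some $t_{ij}\in R$, and applying $D$ to $E_{ij}E_{jk}=E_{ik}$ gives the cocycle relation $t_{ik}=t_{ij}+t_{jk}$ (with $t_{1j}=-t_{j1}$ coming from $E_{1j}E_{j1}=E_{11}$). Solving it as $t_{ij}=s_i-s_j$ with $s_i=t_{i1}$ and $s_1=0$, and subtracting $\operatorname{ad}(\operatorname{diag}(s_i))$---a diagonal, hence row--column finite, matrix---I reduce to a derivation annihilating every matrix unit. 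Such a derivation is determined by $D(E_{11}(r))$, which by the Peirce argument again lands in position $(1,1)$ and defines a map $u\colon R\to R$; the Leibniz identity for $D$ on $E_{11}(r)E_{11}(s)$ shows $u$ is a derivation of $R$, and the factorization $E_{ij}(r)=E_{i1}E_{11}(r)E_{1j}$ shows the remaining derivation acts entrywise as $\bar u$.

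Absorbing $\operatorname{diag}(s_i)$ into $a$ and restricting back to $[A,A]$ then gives $d=\operatorname{ad}(a)+u$ with $a\in M_{rcf}(I,R)$ and $u\in\operatorname{Der}(R)$, as required; one checks that $\operatorname{ad}(a)$ and $\bar u$ preserve $\mathfrak{sl}_\infty(I,R)$, so the decomposition restricts. Besides the finiteness bookkeeping in the second paragraph, the only other care needed is tracking the left/right placement of scalars from the possibly noncommutative ring $R$ through the matrix-unit identities $E_{ij}(r)E_{kl}(s)=\delta_{jk}E_{il}(rs)$; none of these placements causes trouble because every relation used is either additive or an application of a single multiplication.
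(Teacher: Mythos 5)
Your proof is correct and follows the route the paper intends: verify that $A=M_{\infty}(I,R)$ satisfies the hypotheses of Theorem~\ref{main2} with $\operatorname{Ann}(A)=(0)$, lift the Lie derivation to an associative derivation of $A$ via Theorem~\ref{main3}, and then identify associative derivations of $M_{\infty}(I,R)$ as $\operatorname{ad}(a)+\bar u$ with $a\in M_{rcf}(I,R)$. The paper itself delegates this last classification to the cited reference \cite{Bezushchak_Res_Math}, so your matrix-unit computation (including the row--column finiteness of $a$, which is the genuinely delicate point and which you handle correctly via the compatibility $[D(e_k)]_{kl}=-[D(e_l)]_{kl}$) simply supplies the details the paper leaves to the citation.
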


\section{Root Graded Lie Rings}\label{Lie-section}
The following definition is inspired by the Tits-Kantor-Koecher construction \cite{Kantor,Koecher,Tits}, see also \cite{Z}. Consider an integer lattice $$\Gamma=\bigoplus_{i=1}^n \mathbb{Z}\, \omega_i$$ and a root system $\Delta\subset \Gamma$.
A Lie ring $L$ is said to be {\it $\Delta$-graded} if $L$ is $\Gamma$-graded and 
\begin{equation}\label{delta-graded}
L=L_0+\sum_{\alpha\in\Delta} L_{\alpha}, \quad L_0=\sum_{\alpha\in\Delta}[L_{-\alpha},L_{\alpha}].
\end{equation}
\begin{Rem}
	This definition of a $\Delta$-graded Lie ring  differs from the corresponding definitions in \cite{BZ} and \cite{BM}.
\end{Rem}
 Let $A$ be an associative ring, and let $A+\mathbb{Z} \cdot 1$ be the unital hull of $A$ (if  $A$ is not unital).
	Let $e_1,\dots,e_n$ be pairwise orthogonal full idempotents in $A$ or in $A+ \mathbb{Z}\cdot 1$ if $A$ is not unital, such that $\sum_{i=1}^n e_i=1$ with $n\geq 3$. The ring $A$ is graded by the integer lattice $$\Gamma=\bigoplus_{i=1}^n\Z\, \omega_i, \quad \text{where} \quad A_0=\sum_{i=1}^n e_i Ae_i, \quad A_{\omega_i-\omega_j}=e_i Ae_j, \quad
 1\leq i\neq j\leq n.$$ Let $\Delta=\{\omega_i-\omega_j\,|\, 1\leq i\neq j\leq n\}$ be the root system of type $A_{n-1}$. 
 Then 
 \begin{equation*}\label{delta-gradingA}
 A=A_0+\sum_{1\leq i\neq j\leq n} e_i A e_j= A_0+\sum_{\alpha\in\Delta} A_{\alpha}.
 \end{equation*}
 \begin{lem}\label{example-graded}
 The Lie ring $[A,A]$ is $\Delta$-graded:
 \begin{equation*}\label{delta-grading[A,A]}
 [A,A]=\sum_{1\leq i\neq j\leq n}[e_i A e_j,e_j A e_i]+\sum_{1\leq i\neq j\leq n} e_i Ae_j.
 \end{equation*}
 \end{lem}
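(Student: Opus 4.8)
The plan is to exhibit the $\Gamma$-grading on $[A,A]$ induced from the grading $A=\bigoplus_{\gamma\in\Gamma}A_\gamma$ (which is supported on $\{0\}\cup\Delta$), and then to verify the two defining conditions of \eqref{delta-graded} by computing the homogeneous components explicitly. The formalities come first: since $[A_\alpha,A_\beta]\subseteq A_{\alpha+\beta}$ and $A$ is the direct sum of its homogeneous components, the Lie subring $[A,A]=\sum_{\alpha,\beta}[A_\alpha,A_\beta]$ splits as $[A,A]=\bigoplus_\gamma [A,A]_\gamma$ with $[A,A]_\gamma=[A,A]\cap A_\gamma=\sum_{\alpha+\beta=\gamma}[A_\alpha,A_\beta]$. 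In particular $[A,A]$ is $\Gamma$-graded, and it remains only to identify $[A,A]_\gamma$ for $\gamma\in\Delta$ and for $\gamma=0$, all other components vanishing since the support is $\{0\}\cup\Delta$.

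For the root components I fix $i\neq j$ and put $\alpha=\omega_i-\omega_j$. I would show $[A,A]_\alpha=e_iAe_j$ by observing that every $x\in e_iAe_j$ is already a single commutator: writing $x=e_ixe_j$ we have $e_ix=x$ and $xe_i=0$, so $x=[e_i,x]\in[A,A]$. Hence $e_iAe_j=A_\alpha\subseteq[A,A]\cap A_\alpha=[A,A]_\alpha$, and the reverse inclusion is immediate.

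The substance is the component $[A,A]_0$. By the formula above, $[A,A]_0=[A_0,A_0]+\sum_{\alpha\in\Delta}[A_{-\alpha},A_\alpha]$, and the second summand is exactly $\sum_{i\neq j}[e_iAe_j,e_jAe_i]$. So the whole lemma reduces to the inclusion $[A_0,A_0]\subseteq\sum_{i\neq j}[e_iAe_j,e_jAe_i]$, and this is where the fullness hypothesis enters — the one genuinely nontrivial step. Using orthogonality of the idempotents, $[A_0,A_0]=\sum_i[e_iAe_i,e_iAe_i]$, so I must control a commutator $[a,b]$ with $a,b\in e_iAe_i$. Fixing any $j\neq i$, fullness $Ae_jA=A$ lets me write $a=e_iae_i=\sum_m (e_ip_me_j)(e_jq_me_i)=\sum_m u_mw_m$ with $u_m\in e_iAe_j$ and $w_m\in e_jAe_i$ (note this works verbatim in the non-unital case, since $a\in A=Ae_jA$ and $A$ is an ideal of its unital hull). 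The key computation is the identity $[u_mw_m,b]=[u_m,w_mb]-[bu_m,w_m]$, valid because $b\in e_iAe_i$ makes the term $w_mbu_m$ occur with opposite signs and cancel; both terms on the right lie in $[e_iAe_j,e_jAe_i]$, since $u_m,bu_m\in e_iAe_j$ and $w_mb,w_m\in e_jAe_i$. Summing over $m$ gives $[a,b]=\sum_m\bigl([u_m,w_mb]-[bu_m,w_m]\bigr)\in\sum_{i\neq j}[e_iAe_j,e_jAe_i]$, as required.

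Combining the three computations yields $[A,A]=\sum_{i\neq j}[e_iAe_j,e_jAe_i]+\sum_{i\neq j}e_iAe_j$ with homogeneous pieces $L_\alpha=e_iAe_j$ for $\alpha=\omega_i-\omega_j\in\Delta$ and $L_0=[A,A]_0$. Finally, since $L_{\pm\alpha}=A_{\pm\alpha}$ for $\alpha\in\Delta$, the equality $[A,A]_0=\sum_{i\neq j}[e_iAe_j,e_jAe_i]=\sum_{\alpha\in\Delta}[L_{-\alpha},L_\alpha]$ is precisely the second condition of \eqref{delta-graded}, completing the verification that $[A,A]$ is $\Delta$-graded. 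I expect the fullness argument in the third paragraph to be the only real obstacle; the rest is bookkeeping with the Peirce decomposition.
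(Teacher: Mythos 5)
Your proposal is correct and takes essentially the same route as the paper: both reduce the statement, via the Peirce decomposition, to the inclusion $[e_iAe_i,e_iAe_i]\subseteq\sum_{j\neq i}[e_iAe_j,e_jAe_i]$ and prove it by writing $e_iae_i=\sum_m u_mw_m$ with $u_m\in e_iAe_j$, $w_m\in e_jAe_i$ via fullness, your identity $[u_mw_m,b]=[u_m,w_mb]-[bu_m,w_m]$ being exactly the paper's Jacobi-identity step. The only cosmetic point is that when $A$ is non-unital and $e_i\notin A$, your step $x=[e_i,x]$ should be read as $x=[e_i-m\cdot 1,x]$ for the appropriate $m\in\mathbb{Z}$ (or replaced by $x=[x,e_j]$ with $e_j\in A$), so that $x$ is exhibited as a commutator of elements of $A$.
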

 \begin{proof} The Lie ring 
 $$L'=\sum_{1\leq i\neq j\leq n}[e_i Ae_j, e_j Ae_i]+\sum_{1\leq i\neq j\leq n} e_i Ae_j$$ is  $\Delta$-graded. Obviously $L'\subseteq [A,A]$.
 On the other hand, 
 $$
 [A,A]=[\sum_{1\leq k,s\leq n} e_k A e_s, \sum_{1\leq p,q\leq n} e_p A e_q]=\sum_{1\leq k,s,p,q\leq n} [e_k A e_s, e_p A e_q].
 $$
If $s\neq p$ or $k\neq q$, then $[e_k A e_s, e_p A e_q]\subseteq \sum_{1\leq i\neq j\leq n} e_i A e_j$. Thus, to guaranty that $[A,A]\subseteq L'$, 
it suffices to show that  $$[e_i A e_i, e_i A e_i]\subseteq \sum_{j=1,j\neq i}^n[e_iAe_j,e_j A e_i].$$ 
	Let $a,b\in A$. Choose $j\neq i$. Then $$e_i a e_i=\sum_k e_i x^{(k)}e_j y^{(k)}e_i \quad \text{for some elements} \quad x^{(k)}, y^{(k)}\in A.$$
	We have $$[e_i x^{(k)}e_j, e_j y^{(k)}e_i]=e_i x^{(k)}e_j y^{(k)}e_i-e_j y^{(k)} e_i x^{(k)}e_j ,$$ hence 
	$$
	[e_i ae_i, e_i b e_i]=\sum_{k}[[e_i x^{(k)}e_j, e_j y^{(k)}e_i], e_i be_i]\in[e_i A e_j, e_jA e_i].
	$$
This completes the proof of the lemma.
\end{proof}
\begin{Cor}\label{Corollary_4+} The Lie ring $[A,A]$ is perfect.
\end{Cor}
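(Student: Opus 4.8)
The plan is to read the claim directly off the $\Delta$-grading established in Lemma~\ref{example-graded}, exploiting the fact that, as a graded object, $[A,A]$ is controlled by its root spaces $e_iAe_j$ with $i\neq j$. Recall that $[A,A]=L_0+\sum_{i\neq j}e_iAe_j$, where $L_0=\sum_{i\neq j}[e_iAe_j,e_jAe_i]$ is the zero component. Since $[[A,A],[A,A]]\subseteq[A,A]$ is automatic, it suffices to show the reverse inclusion, and for that it is enough to place every homogeneous component of $[A,A]$ inside $[[A,A],[A,A]]$.

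First I would treat a root space $e_iAe_j$ with $i\neq j$. Because $n\geq 3$, I can choose an index $k$ distinct from both $i$ and $j$. For $a,b\in A$ a direct computation gives $[e_iae_k,e_kbe_j]=e_iae_kbe_j$, since the opposite product $e_kbe_je_iae_k$ vanishes using $e_je_i=0$. Letting $a$ and $b$ range over $A$, the span of these brackets is precisely $e_iAe_kAe_j$, and the fullness of $e_k$, namely $Ae_kA=A$, upgrades this to $e_iAe_kAe_j=e_iAe_j$. As both $e_iAe_k$ and $e_kAe_j$ are root spaces, each lies in $[A,A]$, whence $e_iAe_j=[e_iAe_k,e_kAe_j]\subseteq[[A,A],[A,A]]$.

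For the zero component, each summand $[e_iAe_j,e_jAe_i]$ of $L_0$ is already a bracket of two root spaces, both contained in $[A,A]$, so $L_0\subseteq[[A,A],[A,A]]$ as well. Combining the two cases yields $[A,A]\subseteq[[A,A],[A,A]]$, and with the trivial reverse inclusion this gives $[A,A]=[[A,A],[A,A]]$, i.e. $[A,A]$ is perfect.

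The only genuine content — and the step where the hypotheses are actually used — is the identity $e_iAe_j=[e_iAe_k,e_kAe_j]$. It requires the existence of a third index $k\neq i,j$, which is exactly what $n\geq 3$ guarantees, and it requires the fullness $Ae_kA=A$ in order to recover all of $e_iAe_j$ rather than a possibly proper subspace. When $n=2$ no admissible $k$ exists, and indeed in that case both the argument and the conclusion may fail.
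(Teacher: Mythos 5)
Your proof is correct and is essentially the paper's own argument: the paper likewise reduces everything to the identity $e_iAe_j=[e_iAe_k,e_kAe_j]$ for distinct $i,j,k$ (which needs $n\geq 3$ and the fullness $Ae_kA=A$), and handles the zero component by noting that each summand $[e_iAe_j,e_jAe_i]$ is a bracket of root spaces already lying in $[A,A]$. You merely make explicit the one-line computation $[e_iae_k,e_kbe_j]=e_iae_kbe_j$ (via $e_je_i=0$) that the paper leaves implicit.
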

\begin{proof} Indeed, assuming $i,j,k$ are distinct, $e_i Ae_j=[e_i A e_k,e_k Ae_j]\subseteq [[A,A],[A,A]]$, and $[e_i Ae_j, e_j A e_i]\subseteq  [[A,A],[A,A]]$.
\end{proof}
Let $L$, $L'$ be $\Delta$-graded Lie rings. A surjective $\Gamma$-graded homomorphism $\varphi: L'\to L$ is called a {\it graded central extension} if $\operatorname{Ker}\varphi\subseteq L_0'$. Since $L_0'=\sum_{\alpha\in \Delta}[L'_{\alpha},L'_{-\alpha} ]$, it follows that $\operatorname{Ker}\varphi$ lies in the center of $L'$. 

A graded central extension $u: \widehat{L}_{gr}\to L$ is called {\it universal} if, for any graded central 
extension $\varphi: L'\to L$, there exists a $\Gamma$-graded homomorphism $\chi: \widehat{L}_{gr}\to L'$ such that 
the diagram
$$
\xymatrix{\widehat{L}_{gr}\ar[rr]^{\chi}\ar[rrd]_u& &
	L'\ar[d]^{\varphi}
	\\ &&L}
$$
is commutative.
\begin{lem}\label{extension}
	There exists a unique universal graded central extension of a $\Delta$-graded Lie ring. 
\end{lem}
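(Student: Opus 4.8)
The plan is to establish existence and uniqueness separately, with the $\Delta$-grading playing the role that perfectness plays in the classical Schur--Garland theory. One should note at the outset that a $\Delta$-graded Lie ring need \emph{not} be perfect: a Heisenberg ring graded by $A_2$ with only a single pair of nonzero root spaces satisfies \eqref{delta-graded} yet has $[L,L]=L_0\neq L$. Hence the classical theorem cannot be quoted verbatim, and the defining equality $L_0=\sum_{\alpha\in\Delta}[L_{-\alpha},L_\alpha]$ must supply the needed rigidity in its place.

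The first step is a rigidity lemma: if $\varphi\colon L'\to L$ is a graded central extension and $M$ is any $\Delta$-graded Lie ring, then two $\Gamma$-graded homomorphisms $f,g\colon M\to L'$ with $\varphi f=\varphi g$ coincide. Indeed, $h:=f-g$ is a graded additive map whose image lies in $\operatorname{Ker}\varphi\subseteq L_0'$, so $h$ annihilates every component $M_\alpha$ with $\alpha\in\Delta$; and for $x\in M_{-\alpha}$, $y\in M_\alpha$ one gets $h([x,y])=[f(x),f(y)]-[g(x),g(y)]=0$, since $f$ and $g$ already agree on $M_{\pm\alpha}$ and the remaining summands are central, so $h$ also annihilates $M_0=\sum_\alpha[M_{-\alpha},M_\alpha]$. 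Thus $h=0$. Granting existence, uniqueness is then the standard two-sided-inverse argument: from two universal graded central extensions $u\colon\widehat L_{gr}\to L$ and $u'\colon\widehat L_{gr}'\to L$ universality yields graded comparison maps $\chi,\chi'$ in both directions, and the rigidity lemma (applied with $M=\widehat L_{gr}$ and $M=\widehat L_{gr}'$) forces $\chi'\chi=\mathrm{id}$ and $\chi\chi'=\mathrm{id}$.

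For existence I would construct $\widehat L_{gr}$ by generators and relations, keeping it $\Delta$-graded by fixing its root spaces. Set $(\widehat L_{gr})_\alpha=L_\alpha$ for $\alpha\in\Delta$, and let $(\widehat L_{gr})_0$ be the abelian group generated by symbols $x\diamond y$ with $x\in L_{-\alpha}$, $y\in L_\alpha$ ($\alpha\in\Delta$), additive in each argument, modulo the relations $[x,y]\diamond z+[y,z]\diamond x+[z,x]\diamond y=0$ arising from each triple of root vectors $x\in L_\alpha$, $y\in L_\beta$, $z\in L_\gamma$ with $\alpha+\beta+\gamma=0$ (all pairwise sums are then automatically roots of $A_{n-1}$, so every symbol is well formed). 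Define the bracket so that: it restricts to that of $L$ on $\bigoplus_{\alpha\in\Delta}L_\alpha$ whenever the sum of degrees lies in $\Delta$; $[x,y]\mapsto x\diamond y$ for $x\in L_{-\alpha}$, $y\in L_\alpha$; a degree-zero element $x\diamond y$ acts on all of $\widehat L_{gr}$ as the derivation $\operatorname{ad}[x,y]_L$; and all remaining products vanish. Then $u\colon\widehat L_{gr}\to L$ with $u|_{L_\alpha}=\mathrm{id}$ and $u(x\diamond y)=[x,y]_L$ is a surjective $\Gamma$-graded homomorphism with $\operatorname{Ker}u\subseteq(\widehat L_{gr})_0$, i.e. a graded central extension. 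For the universal property, given any graded central extension $\varphi\colon L'\to L$ one lifts each $L_\alpha$ identically (possible since $\varphi$ restricts to an isomorphism $L'_\alpha\to L_\alpha$ for $\alpha\in\Delta$) and sets $\chi(x\diamond y)=[x',y']$ for the chosen preimages; the defining relations make this well defined, and it is the required graded homomorphism, unique by the rigidity lemma.

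The main obstacle is the existence step, and two points there deserve care. First, one must verify the Jacobi identity for the bracket on $\widehat L_{gr}$: the only nontrivial instances involve three factors whose degrees sum into $\{0\}\cup\Delta$, and these reduce precisely to the imposed relations $[x,y]\diamond z+[y,z]\diamond x+[z,x]\diamond y=0$ together with the Jacobi identity already holding in $L$. Second, one must confirm that these relations are weak enough not to collapse $(\widehat L_{gr})_0$—so that $\widehat L_{gr}$ is genuinely the \emph{freest} $\Delta$-graded central extension—while being strong enough to make $\chi$ well defined in the universal property. This balance, rather than the formal uniqueness argument, is where the real content lies.
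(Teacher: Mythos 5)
Your uniqueness half is sound --- the rigidity lemma (graded lifts agree on root spaces because $\operatorname{Ker}\varphi\cap L'_\alpha=(0)$ for $\alpha\neq 0$, hence on $L_0'=\sum_\alpha[L'_{-\alpha},L'_\alpha]$) is correct and in fact fills in what the paper dismisses as immediate, and your observation that $\Delta$-graded rings need not be perfect matches the paper's own remark. But the existence construction has a genuine gap, and it sits exactly at the point you deferred: your relations on $(\widehat L_{gr})_0$ are too weak for the proposed bracket to be a Lie bracket. Beyond the easily repaired omission of the skew relations $x\diamond y+y\diamond x=0$ (without which $[x,y]=-[y,x]$ already fails), the substantive problem is antisymmetry of the bracket of two degree-zero elements, which requires
$$
[[x,y],u]\diamond v+u\diamond[[x,y],v]+[[u,v],x]\diamond y+x\diamond[[u,v],y]=0,
$$
and this is \emph{not} a consequence of your triangle relations. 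Concretely, take $L=\mathfrak{sl}_2(\mathbb{Z}[t])$, graded by $\Delta=A_2$ with $L_{\omega_1-\omega_2}=\mathbb{Z}[t]e$, $L_{\omega_2-\omega_1}=\mathbb{Z}[t]f$, all other root spaces zero, and $L_0=\mathbb{Z}[t]h=[L_{-\alpha},L_\alpha]$; this satisfies \eqref{delta-graded} (the definition does not force the other root spaces to be nonzero). No zero-sum triple of roots has all three root spaces nonzero, so every one of your triangle relations is vacuous and $(\widehat L_{gr})_0\cong \mathbb{Z}[t]\otimes_{\mathbb{Z}}\mathbb{Z}[t]$ (after adding skew), identifying $pf\diamond qe$ with $p\otimes q$. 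With $x=tf$, $y=te$, $u=f$, $v=e$ one computes $[x,y]_L=-t^2h$, $[u,v]_L=-h$, whence $[x\diamond y,u\diamond v]=2t^2\otimes 1-2\otimes t^2$ while $[u\diamond v,x\diamond y]=2t\otimes t-2t\otimes t=0$. Since $t^2\otimes 1\neq 1\otimes t^2$ in $\mathbb{Z}[t]\otimes_{\mathbb{Z}}\mathbb{Z}[t]$, your bracket is not antisymmetric, so the constructed object is not a Lie ring at all. Repairing this forces you to enlarge the relation set to include all degree-zero Jacobi consequences (as in the classical construction via $\Lambda^2 L$ modulo \emph{all} Jacobi elements, not just root triples with zero sum); your verification that $\chi$ is well defined does extend to those relations (they map to Jacobi identities in $L'$), but as written the existence step fails.

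The paper's proof avoids this entire verification burden by a different device: it presents $\widehat L_{gr}$ as a quotient of the \emph{free Lie ring} $Lie\langle X\rangle$ on a graded generating set $X=\bigsqcup_{\alpha\in\Delta}X_\alpha$, dividing by the homogeneous components $I_\alpha$, $\alpha\neq 0$, of the kernel $I$ of the natural map $Lie\langle X\rangle\to L$. All Lie axioms then hold automatically (it is a quotient of a free Lie ring); $\operatorname{Ker}u$ is concentrated in degree zero by construction, since $u(a)=0$ with $a$ homogeneous of degree $\alpha\neq 0$ forces $a\in I_\alpha=(0)$ in the quotient; and universality is obtained by lifting each generator of $X_\alpha$ through a given graded central extension $\varphi:L'\to L$, the relations in $I_\alpha$ dying because their lifts land in $\operatorname{Ker}\varphi\cap L'_\alpha=(0)$. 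Note also that this presentation kills the components of $Lie\langle X\rangle$ in degrees outside $\Delta\cup\{0\}$ wholesale (there $I_\gamma$ is the entire component), which is what makes $\widehat L_{gr}$ itself $\Delta$-graded and your rigidity argument applicable to it. If you want an explicit description of $(\widehat L_{gr})_0$ of the kind you attempted, it must be extracted \emph{from} such a presentation rather than postulated with only the triangle relations.
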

\begin{proof}
	Let $E_{\alpha}$ be a generating system of the abelian group $L_{\alpha}$, $\alpha\in\Delta$. Let $X_{\alpha}$
	be a copy of the set $E_{\alpha}$, and let $\psi_{\alpha}: X_{\alpha}\to E_{\alpha}$ be a bijection. Define  $X=\bigsqcup _{\alpha\in\Delta} X_{\alpha}$, the disjoint union of the  $X_{\alpha}$'s. Let $Lie\langle X\rangle$ be the free Lie ring on the set of free generators
	$X$, and let $\psi: X\to \cup_{\alpha\in\Delta}E_{\alpha}$ be the mapping that extends the mappings $\psi_{\alpha}$,
	$\alpha\in\Delta$. The mapping $\psi$ extends to a homomorphism  $\psi: Lie\langle X\rangle \to L$. Since the mapping $\psi: X\to \cup_{\alpha\in\Delta} E_{\alpha}$ is $\Gamma$-graded, it follows that the homomorphism $\psi$
	is $\Gamma$-graded as well. Then $I=\operatorname{Ker}\psi=\Sigma_{\alpha\in\Gamma} I_{\alpha}$. Consider the Lie ring $\widehat{L}_{gr}$, which is presented by generators $X$ and the set of relations $\Sigma_{0\neq \alpha\in\Gamma} I_{\alpha}$. Since $\Sigma_{0\neq \alpha\in\Gamma} I_{\alpha}\subset I$, there exists a natural homomorphism $u: \widehat{L}_{gr}\to L$. If $a\in (\widehat{L}_{gr})_{\alpha}$, $\alpha\neq 0$,
	and $u(a)=0$, then $a\in I_{\alpha}$, and therefore $a=0$. We have shown that $\operatorname{Ker}u\subseteq (\widehat{L}_{gr})_0$, thus
	$u: \widehat{L}_{gr}\to L$ is a graded central extension. The universality and uniqueness of $u$ immediately follow from the construction. This completes the proof of the lemma.
\end{proof}
\begin{Rem}
The homomorphism $\widehat{L}_{gr}\to L$ is not a universal central extension of $L$ in the sense of I. Schur \cite{Schur} and H. Garland \cite{Garland}. In general, we do not even assume that the ring $L$ is perfect. 
\end{Rem}
There is an analog of the Schur-Garland construction for annihilator extension of associative rings.
\begin{lem}
	Let $A$ be an associative ring with $A=AA$. There exists a unique universal annihilator extension $\omega: U\to A$, $U^2=U$,
	and $\operatorname{Ker}\omega\subset \operatorname{Ann}(U)$, such that for any annihilator extension $\varphi: A'\to A$,
	there exists a homomorphism $\chi: U\to A'$ making the diagram 
	$$
	\xymatrix{U\ar[rr]^{\chi}\ar[rrd]_{\omega}& &
		A'\ar[d]^{\varphi}
		\\ &&A}
	$$
	commutative.
\end{lem}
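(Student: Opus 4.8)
The plan is to imitate the Schur--Garland construction of a universal central extension, transporting it from the Lie to the associative setting, with the annihilator $\operatorname{Ann}(-)$ playing the role previously played by the center. First I would fix a free presentation of $A$: let $F$ be a free associative ring together with a surjective homomorphism $\pi\colon F\to A$, and put $R=\operatorname{Ker}\pi$, a two-sided ideal of $F$ with $F/R\cong A$. By analogy with the quotient $[F,F]/[F,R]$ that produces the universal central extension of a perfect Lie ring, I would define
\[
U=F^2/(FR+RF),
\]
where $F^2=FF$. Since $FR+RF\subseteq F^2$ and $FR+RF\subseteq R$, the homomorphism $\pi$ descends to a well-defined surjection $\omega\colon U\to A$ whose kernel is $(R\cap F^2)/(FR+RF)$.

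Next I would verify that $\omega$ is genuinely an annihilator extension. The hypothesis $A=AA$ enters precisely here: applying $\pi$ gives $\pi(F^2)=A^2=A$, whence $F=F^2+R$. Squaring this identity and using $F^2R\subseteq FR$, $RF^2\subseteq RF$, and $R^2\subseteq FR$, I would obtain $F^2=F^4+FR+RF$, so that modulo $FR+RF$ one has $U^2=U$; thus $U$ is idempotent. For the kernel, any $x\in R\cap F^2$ satisfies $xF^2\subseteq RF$ and $F^2x\subseteq FR$, both of which vanish in $U$, so that $\operatorname{Ker}\omega\subseteq\operatorname{Ann}(U)$. Hence $\omega\colon U\to A$ is an annihilator extension.

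For universality, let $\varphi\colon A'\to A$ be an arbitrary annihilator extension. Freeness of $F$ lets me lift $\pi$ to a homomorphism $\widetilde\pi\colon F\to A'$ with $\varphi\circ\widetilde\pi=\pi$, simply by sending free generators to $\varphi$-preimages of their $\pi$-images. Then for $r\in R$ one has $\varphi(\widetilde\pi(r))=\pi(r)=0$, so $\widetilde\pi(R)\subseteq\operatorname{Ker}\varphi\subseteq\operatorname{Ann}(A')$. Consequently $\widetilde\pi(FR)=\widetilde\pi(F)\,\widetilde\pi(R)\subseteq A'\cdot\operatorname{Ann}(A')=(0)$, and likewise $\widetilde\pi(RF)=(0)$; thus the restriction of $\widetilde\pi$ to $F^2$ annihilates $FR+RF$ and factors through a homomorphism $\chi\colon U\to A'$ with $\varphi\circ\chi=\omega$, giving the required factorization.

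The step I expect to be the main obstacle is the uniqueness of $\chi$, since it is where the two defining properties of an annihilator extension, idempotency and the annihilator condition, must be used in tandem. If $\chi,\chi'\colon U\to A'$ both satisfy $\varphi\circ\chi=\varphi\circ\chi'=\omega$, then $\delta=\chi-\chi'$ maps $U$ into $\operatorname{Ker}\varphi\subseteq\operatorname{Ann}(A')$. Writing an arbitrary element of $U=U^2$ as a sum of products $uv$ and expanding $\chi(uv)-\chi'(uv)$ with $\chi=\chi'+\delta$, every term of the expansion contains a factor $\delta(u)$ or $\delta(v)$ lying in $\operatorname{Ann}(A')$ multiplied by an element of $A'$, and so vanishes; hence $\delta=0$ on $U^2=U$, that is $\chi=\chi'$. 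Finally, uniqueness of the universal object $U$ up to isomorphism is formal: two universal annihilator extensions of $A$ admit mutually inverse comparison maps over $A$, and the uniqueness of factorizations just established forces their composites to equal the identities. The only remaining points are routine bookkeeping with the ideals $FR$ and $RF$ and checking that the relevant inclusions survive passage to the quotient.
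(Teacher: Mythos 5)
Your construction is exactly the paper's: the authors take a free presentation $\mu\colon Ass\langle X\rangle\to A$ with kernel $I$ and set $U=Ass\langle X\rangle^2/(I\,Ass\langle X\rangle+Ass\langle X\rangle\,I)$, explicitly leaving the verification to the reader. Your proposal supplies precisely those omitted checks (idempotency of $U$ via $F=F^2+R$, the annihilator condition on the kernel, the lift along free generators, and uniqueness of $\chi$ using $U=U^2$ together with $\operatorname{Ker}\varphi\subseteq\operatorname{Ann}(A')$), and all of them are correct.
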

\begin{proof}
	We follow the classical scheme from \cite{Garland,Schur}. Let $Ass\langle X\rangle$ be a free associative ring on the set of free 
	generators $X$. Note that $Ass\langle X\rangle$ is a free associative ring without a unit. Let $\mu: Ass\langle X\rangle \to A$
	be an epimorphism, and let $I=\operatorname{Ker}\mu$. We leave it to the reader to verify that the ring 
	$$
	U=Ass\langle X\rangle^2/(I\, Ass\langle X\rangle+Ass\langle X\rangle I)
	$$
	with the natural epimorphism $\omega: U\to A$ is a universal annihilator extension and  is unique. This completes the sketch of the proof.
\end{proof}

Let $L=L_0+\sum_{\alpha\in\Delta} L_{\alpha}$ be a $\Delta$-graded Lie ring (see \eqref{delta-graded}), and let $A$ be an associative ring. Let 
$u:\widehat{L}_{gr}\to L$ be the universal graded central extension of $L$.

We call a Lie homomorphism $\varphi: \widehat{L}_{gr}\to A^{(-)}$  a {\it  specialization} if 
$\varphi(L_{\alpha_1})\cdots\varphi(L_{\alpha_n})=(0)$ for any $\alpha_1,\dots,\alpha_n\in \Delta$ such that $\alpha_1+\dots+\alpha_n\notin \Delta\cup\{0\}$.
Since the root systems that arise in Theorems \ref{main} and \ref{main2}  are $A_2$ and $A_3$,
respectively, we  assume that
$$
\Delta=\{\omega_i -\omega_j\,|\,1\leq i\neq j\leq n\}\subset \bigoplus_{i=1}^n \mathbb{Z}\, \omega_i, \quad n\geq 3.
$$
\begin{lem}\label{tech-lemma} 
	Let $L=L_0+\Sigma_{\alpha\in\Delta} L_{\alpha}$ be a $\Delta$-graded Lie ring, let $A$ be an associative ring, and let 
	$\varphi:\widehat{L}_{gr}\to A^{(-)}$ be a Lie homomorphism. If
	$\varphi(L_{\alpha})\varphi(L_{\beta})=0$ for any $\alpha$, $\beta\in\Delta$ such that $\alpha+\beta\notin\Delta\cup\{0\}$, then $\varphi$ is a specialization.
\end{lem}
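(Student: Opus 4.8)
The plan is to prove, by strong induction on the number $k$ of factors, the following slightly stronger statement: for all $\beta_1,\dots,\beta_k\in\Delta\cup\{0\}$ with $\beta_1+\dots+\beta_k\notin\Delta\cup\{0\}$ and all $y_l\in L_{\beta_l}$, the associative product $\varphi(y_1)\cdots\varphi(y_k)$ vanishes in $A$. Here I write $L_\beta$ for the homogeneous component $(\widehat{L}_{gr})_\beta$; recall from the construction of $\widehat{L}_{gr}$ in Lemma~\ref{extension} that its grading is supported in $\Delta\cup\{0\}$, so that $[L_\beta,L_{\beta'}]\subseteq L_{\beta+\beta'}=(0)$ whenever $\beta+\beta'\notin\Delta\cup\{0\}$. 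Admitting the degree $0$ among the $\beta_l$ is what makes the induction self-contained: the base case $k=2$ reduces to the hypothesis $\varphi(L_\alpha)\varphi(L_\beta)=(0)$ (the cases involving a zero degree being vacuous, since then $\beta_1+\beta_2\in\Delta\cup\{0\}$), while the conclusion of the lemma is exactly the special case in which all $\beta_l$ lie in $\Delta$.

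First I would isolate the single mechanism driving the induction. Since $\varphi$ is a Lie homomorphism into $A^{(-)}$, one has $\varphi(y_i)\varphi(y_{i+1})=\varphi([y_i,y_{i+1}])+\varphi(y_{i+1})\varphi(y_i)$, so transposing two neighbouring factors changes the whole product by $\pm\,\varphi(y_1)\cdots\varphi([y_i,y_{i+1}])\cdots\varphi(y_k)$. The bracket $[y_i,y_{i+1}]$ lies in $L_{\beta_i+\beta_{i+1}}$, so this correction is a product of $k-1$ factors whose degrees again lie in $\Delta\cup\{0\}$ and whose total degree is unchanged, hence still outside $\Delta\cup\{0\}$; by the induction hypothesis it is $0$ (and if $\beta_i+\beta_{i+1}\notin\Delta\cup\{0\}$ the correction is literally $0$ because $[y_i,y_{i+1}]=0$). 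Thus every adjacent transposition preserves the value of the product, and since adjacent transpositions generate the symmetric group, the value of $\varphi(y_1)\cdots\varphi(y_k)$ is invariant under arbitrary permutations of the factors.

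With this in hand the argument finishes combinatorially. Because the total degree is bad, it suffices to exhibit a single pair with $\beta_i+\beta_j\notin\Delta\cup\{0\}$: I may then permute the factors (for free, by the previous paragraph) so that these two become adjacent and apply the hypothesis $\varphi(L_{\beta_i})\varphi(L_{\beta_j})=(0)$ to kill the whole product. So the remaining point is the purely root-theoretic claim that \emph{if all pairwise sums $\beta_i+\beta_j$ lie in $\Delta\cup\{0\}$, then so does $\sum_l\beta_l$}. To prove it I would discard the zero degrees and encode each remaining root $\omega_p-\omega_q$ as the directed edge $q\to p$ on the vertex set $\{1,\dots,n\}$. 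Admissibility (all pairwise sums in $\Delta\cup\{0\}$) forces the edges to be distinct and pairwise intersecting, any two meeting head-to-tail (sum a root) or antiparallel (sum $0$). By the classical dichotomy for pairwise-intersecting edges, such a family either shares a common vertex---where admissibility permits at most one incoming and one outgoing edge, hence at most two edges, with sum $\omega_x-\omega_y\in\Delta\cup\{0\}$---or forms a directed triangle, necessarily a $3$-cycle of sum $0$. In either case the total lies in $\Delta\cup\{0\}$, which proves the claim.

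The step I expect to be the main obstacle is the bookkeeping around degree $0$. The naive induction over products of \emph{root}-homogeneous elements breaks exactly when a transposition merges a root with its negative, since then $[y_i,y_{i+1}]\in L_0$ falls outside the root grading; the resolution above is to widen the inductive statement to allow degree-$0$ factors from the outset, so that such a correction is still a shorter product of the same bad total degree and is annihilated by the induction hypothesis uniformly with the generic case. The only other place requiring care is the classification of admissible pairwise-intersecting edge families, where one must verify that no fourth edge can be adjoined to a directed triangle and that a common vertex admits at most one incoming and one outgoing admissible edge.
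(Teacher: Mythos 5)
Your proof is correct, and its skeleton coincides with the paper's: induct on the number of factors, use the Lie-homomorphism identity to transpose adjacent factors modulo products of length one less, conclude that the product is invariant under arbitrary permutations, and finish by bringing together a single pair of degrees whose sum lies outside $\Delta\cup\{0\}$ (such a pair automatically consists of two roots, since a zero degree never forms a bad pair, so the hypothesis of the lemma applies to it). You deviate in two local respects. First, to produce the bad pair the paper argues with the functionals $f_g$ attached to sign maps $g:\{\omega_1,\dots,\omega_n\}\to\{-1,1\}$: every root has $f_g$-value in $\{-2,0,2\}$, a bad total degree $\gamma$ admits some $g$ with $f_g(\gamma)\geq 4$, and pigeonhole yields two factors of $f_g$-value $2$, whose sum is then not in $\Delta\cup\{0\}$. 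Your directed-graph classification of admissible families (all edges through a common vertex with at most one incoming and one outgoing edge, or a directed $3$-cycle) proves the same fact --- indeed a sharper structural statement --- at the cost of exactly the case analysis you flag; the paper's functional trick is slicker and scales to $A_{n-1}$ with no combinatorial casework. Second, and more substantively, your widening of the induction to admit degree-$0$ factors repairs a point the paper elides: when $\alpha_i+\alpha_{i+1}=0$, the correction term $\varphi(L_{\alpha_1})\cdots\varphi([L_{\alpha_i},L_{\alpha_{i+1}}])\cdots\varphi(L_{\alpha_m})$ contains a factor of degree $0$, which the paper's inductive statement (all degrees in $\Delta$) does not literally cover, yet is treated as covered by the induction on length. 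Your strengthened statement handles this uniformly (the $k=2$ base with a zero degree is vacuous, as you observe), and your appeal to $(\widehat{L}_{gr})_\gamma=(0)$ for $\gamma\notin\Delta\cup\{0\}$, which is immediate from the construction in Lemma~\ref{extension}, correctly justifies $[y_i,y_{i+1}]=0$ for bad adjacent sums. In short, your route buys a self-contained, gap-free induction; the paper's buys brevity in the root combinatorics.
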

\begin{proof}
	An arbitrary mapping $g\,:\,\{\omega_1,\dots,\omega_n\}\to\{-1,1\}$ gives rise to a homomorphism 
	$$f_g:\bigoplus_{i=1}^n \mathbb{Z}\, \omega_i\to\mathbb{Z}, \qquad f_g(\omega_i)=g(\omega_i), \quad 1\leq i\leq n.$$
	All roots $\alpha=\omega_i-\omega_j$, $1\leq i\neq j\leq n$, have values  $-2,$ $0$ or $2$.
	If $\gamma\notin \Delta\cup\{0\}$, then there exists $g$ such that $f_g(\gamma)>2$. We need to show that 
	$$\varphi(L_{\alpha_1})\cdots\varphi(L_{\alpha_m})=(0), \quad m\geq 2,$$ whenever $\alpha_1,\dots,\alpha_m\in\Delta$ and $\gamma=\alpha_1+\dots+\alpha_m\notin \Delta\cup\{0\}$. By the assumption of the lemma, the statement is true for $m=2$.
	Let $m\geq 3$ and assume that the assertion holds for products of length $<m$. Note that we can permute any two factors in $\varphi(L_{\alpha_1})\varphi(L_{\alpha_2})\cdots\varphi(L_{\alpha_m})$.   Indeed, if 
	$\alpha_{i}+\alpha_{i+1}\notin \Delta\cup\{0\}$, $1\leq i<m$, then $\varphi(L_{\alpha_i})\varphi(L_{\alpha_{i+1}})=(0)$ by the assumption. Suppose that $\alpha_{i}+\alpha_{i+1}\in \Delta\cup\{0\}$. We have $$
	\begin{array}{l} \varphi(L_{\alpha_1}) \cdots \varphi(L_{\alpha_i})\varphi(L_{\alpha_{i+1}})\cdots\varphi(L_{\alpha_m})\subseteq\varphi(L_{\alpha_1})\cdots \varphi(L_{\alpha_{i+1}})\varphi(L_{\alpha_i})\cdots\varphi(L_{\alpha_m})+\vspace{0.3cm}\\
    \qquad\qquad\qquad +
    \varphi(L_{\alpha_1})\cdots \varphi([L_{\alpha_i},L_{\alpha_{i+1}}])\cdots\varphi(L_{\alpha_m}).
		\end{array}
	$$
	The product $\varphi(L_{\alpha_1})\cdots \varphi([L_{\alpha_i},L_{\alpha_{i+1}}])\cdots\varphi(L_{\alpha_m})$ has length $m-1,$ and  thus, by the induction hypothesis on $m$, it is equal to zero. Consequently, for an arbitrary permutation $\pi$ on $m$ symbols,  we have $$\varphi(L_{\alpha_1})\cdots\varphi(L_{\alpha_m})=\varphi(L_{\alpha_{\pi(1)}})\cdots\varphi(L_{\alpha_{\pi(m)}}).$$  
    Suppose that $\gamma = \alpha_{1}+\cdots +\alpha_{m}\notin \Delta\cup\{0\}$. Then there exists $g:\{w_1,\ldots, w_n\}\to \{-1,1\}$ such that $f_g (\gamma )> 2.$ Since $f_g (\gamma )=f_g(\alpha_1)+\cdots + f_g(\alpha_m)$, it follows that $f_g(\gamma)\geq 4.$ Then there exist $1\leq i< j\leq n$ such that
	$f_g(\alpha_i)=f_g(\alpha_j)=2$. By moving these two factors together, we obtain
	$$
	\varphi(L_{\alpha_1})\cdots\varphi(L_{\alpha_m}) =\varphi(L_{\alpha_i})\varphi(L_{\alpha_j})\cdots=(0),
	$$
	since $\alpha_i+\alpha_j\notin\Delta\cup\{0\}$. This completes the proof of the lemma.
\end{proof}
Let $U$ be an associative ring. A specialization $u: \widehat{L}_{gr}\to U$ is called \textit{universal} if $U$ is generated by the subset $\cup_{\alpha\in\Delta} u(L_{\alpha})$, and,   for 
	an arbitrary specialization $\varphi :\widehat{L}_{gr}\to A$ in an associative ring $A$, there exists a homomorphism 
	$\chi\,:\, U\to A$ of associative rings such that the diagram
	$$
	\xymatrix{\widehat{L}_{gr}\ar[rr]^u\ar[rrd]_{\varphi}& &
		U\ar[d]^{\chi}
		\\ &&A}
	$$
	is commutative.
\begin{lem}
	For an arbitrary $\Delta$-graded Lie ring $L=L_0+\Sigma_{\alpha\in\Delta} L_{\alpha}$, there exists a unique 
	universal specialization.  
	\end{lem}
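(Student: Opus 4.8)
The plan is to obtain the universal specialization as a quotient of the (non-unital) universal associative enveloping ring of the Lie ring $\widehat{L}_{gr}$, followed by passage to the subring generated by the images of the root spaces. Write $\mathcal{E}$ for this enveloping ring and $\iota\colon \widehat{L}_{gr}\to\mathcal{E}^{(-)}$ for its canonical Lie homomorphism; by construction every Lie homomorphism $\varphi\colon\widehat{L}_{gr}\to A^{(-)}$ into an associative ring factors uniquely as $\varphi=\bar\varphi\circ\iota$ with $\bar\varphi\colon\mathcal{E}\to A$ an associative homomorphism. Let $J\subseteq\mathcal{E}$ be the two-sided ideal generated by all products $\iota(x)\iota(y)$ with $x\in(\widehat{L}_{gr})_\alpha$, $y\in(\widehat{L}_{gr})_\beta$ and $\alpha+\beta\notin\Delta\cup\{0\}$, put $\mathcal{E}'=\mathcal{E}/J$ with projection $\pi\colon\mathcal{E}\to\mathcal{E}'$, and let $U$ be the subring of $\mathcal{E}'$ generated by $\bigcup_{\alpha\in\Delta}\pi\iota((\widehat{L}_{gr})_\alpha)$. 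Because $\widehat{L}_{gr}$ is $\Delta$-graded we have $(\widehat{L}_{gr})_0=\sum_{\alpha\in\Delta}[(\widehat{L}_{gr})_\alpha,(\widehat{L}_{gr})_{-\alpha}]$, so $\widehat{L}_{gr}$ is generated as a Lie ring by its root spaces; since $U$ is a subring, hence closed under commutators, and contains $\pi\iota((\widehat{L}_{gr})_\alpha)$ for every $\alpha\in\Delta$, the image of $\pi\circ\iota$ lands inside $U$, and I define $u\colon\widehat{L}_{gr}\to U$ to be the corestriction of $\pi\circ\iota$.

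Next I would check that $u$ is a specialization. It is a Lie homomorphism, and $U$ is generated by $\bigcup_{\alpha\in\Delta}u((\widehat{L}_{gr})_\alpha)$ by construction. For the vanishing condition, in $\mathcal{E}'$ the products $\pi\iota((\widehat{L}_{gr})_\alpha)\,\pi\iota((\widehat{L}_{gr})_\beta)$ vanish whenever $\alpha+\beta\notin\Delta\cup\{0\}$, directly from the definition of $J$. Thus the degree-two vanishing holds, and Lemma~\ref{tech-lemma} promotes it to $u((\widehat{L}_{gr})_{\alpha_1})\cdots u((\widehat{L}_{gr})_{\alpha_m})=(0)$ for every $m$ with $\alpha_1+\dots+\alpha_m\notin\Delta\cup\{0\}$; hence $u$ is a specialization.

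For universality, take an arbitrary specialization $\varphi\colon\widehat{L}_{gr}\to A$ and factor it as $\bar\varphi\circ\iota$ through $\mathcal{E}$. Since $\varphi$ is a specialization, $\bar\varphi(\iota(x)\iota(y))=\varphi(x)\varphi(y)=0$ for $x\in(\widehat{L}_{gr})_\alpha$, $y\in(\widehat{L}_{gr})_\beta$ with $\alpha+\beta\notin\Delta\cup\{0\}$, so $\bar\varphi$ annihilates $J$ and descends to $\tilde\varphi\colon\mathcal{E}'\to A$. Its restriction $\chi=\tilde\varphi|_U\colon U\to A$ satisfies $\chi\circ u=\varphi$, and because $U$ is generated by $\bigcup_{\alpha\in\Delta}u((\widehat{L}_{gr})_\alpha)$, on which $\chi$ is forced to equal $\varphi$, the homomorphism $\chi$ is unique. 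This exhibits $u\colon\widehat{L}_{gr}\to U$ as a universal specialization, and uniqueness up to a canonical isomorphism then follows by the usual comparison of two universal objects through the maps each produces for the other, whose composites are identities by the generation property.

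The step I expect to require the most care is the bookkeeping around the degree-zero component. Building $U$ from the whole enveloping ring $\mathcal{E}$, rather than presenting it directly by generators indexed by the root spaces, is what lets me avoid re-imposing by hand all the Lie relations involving $(\widehat{L}_{gr})_0$, while Lemma~\ref{tech-lemma} is precisely the tool that collapses the infinitely many defining vanishing conditions of a specialization down to the degree-two relations that cut out $J$. One should also confirm that $\mathcal{E}$ and $J$ are taken in the non-unital category, consistent with the convention used for the free associative rings $Ass\langle X\rangle$ above, so that no spurious unit is introduced.
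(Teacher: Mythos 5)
Your proof is correct, and it arrives at the same object as the paper via a mildly different presentation. The paper's proof recycles the construction of Lemma~\ref{extension}: it takes the free associative ring $A\langle X\rangle$ on the chosen generating set $X=\bigsqcup_{\alpha\in\Delta}X_{\alpha}$ and quotients by the ideal $J$ generated by the graded relations $\sum_{0\neq\alpha\in\Gamma}I_{\alpha}$ together with \emph{all} bad monomials $X_{\alpha_1}\cdots X_{\alpha_r}$ with $\alpha_1+\cdots+\alpha_r\notin\Delta\cup\{0\}$, of every length $r$; with these relations built in, $u$ is a specialization by fiat, and universality is read off by sending generators to their $\varphi$-images. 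You differ in two ways. First, you start from the non-unital enveloping ring of $\widehat{L}_{gr}$, which is canonically the same as $A\langle X\rangle$ modulo the ideal generated by $\sum_{0\neq\alpha}I_{\alpha}$, so this is a coordinate-free repackaging that does not depend on the choice of the generating sets $E_{\alpha}$. Second, and more substantively, you impose only the length-two relations and invoke Lemma~\ref{tech-lemma} to upgrade them to the full specialization condition; by that same lemma all longer bad products already lie in your ideal, so your $\mathcal{E}'$ coincides with the paper's $U$ (and in fact equals your subring $U$, since $\widehat{L}_{gr}$ is generated by its root spaces, which makes your corestriction step harmless but redundant). What each route buys: yours has a leaner set of defining relations and makes independence from choices transparent; the paper's is self-contained and would work verbatim for an arbitrary root system $\Delta$, whereas your reduction is tied to the standing hypothesis $\Delta=\{\omega_i-\omega_j\mid 1\leq i\neq j\leq n\}$, $n\geq 3$, since Lemma~\ref{tech-lemma} is proved only for type $A_{n-1}$ via the functionals $f_g$ --- under the paper's assumptions this costs nothing. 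Your universality and uniqueness arguments (factoring $\bar\varphi$ through $J$, then pinning $\chi$ down on the generating set $\bigcup_{\alpha\in\Delta}u((\widehat{L}_{gr})_{\alpha})$) match the paper's.
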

\begin{proof}
	Recall the construction of Lemma~\ref{extension}. Choose a set of generators $E_{\alpha}$
	in the abelian group $L_{\alpha}$. Let $X_{\alpha}$ be a copy of the set $E_{\alpha}$, $\psi_{\alpha}: X_{\alpha}\to E_{\alpha}$  a bijection, and $X=\bigsqcup_{\alpha\in\Delta} X_{\alpha}$. Let $\psi: Lie\langle X\rangle\to L$ be the natural $\Gamma$-graded epimorphism
	 extending the mapping  $\cup_{\alpha\in\Delta}\psi_{\alpha}: X \to L$. Let  $I=\operatorname{Ker}\psi=\sum_{\alpha\in\Gamma}I_{\alpha}$, and define $$\widehat{L}_{gr}=Lie\big\langle X\,|\, \sum_{0\neq \alpha\in\Gamma} I_{\alpha}=(0)\big\rangle.$$ Consider the free associative ring $A\langle X \rangle$
	on the set of free generators $X$, where  $Lie\langle X\rangle\subset A\langle X\rangle^{(-)}$.

Let $J$ be the ideal of $A\langle X \rangle$ generated by $\sum_{0\neq \alpha\in\Gamma} I_{\alpha}$ and by the set $\cup X_{\alpha_1}\dots X_{\alpha_r}$, with
	$\alpha_1+\dots +\alpha_r\notin\Delta\cup\{0\}$.

Let $U=A\langle X\rangle/J$. Since $\sum_{0\neq \alpha\in\Gamma} I_{\alpha}\subset J$, we obtain a natural Lie ring homomorphism $u:\widehat{L}\to U^{(-)}$. Since elements $X_{\alpha_1}\dots X_{\alpha_r}$, where $\alpha_1+\dots +\alpha_r\notin\Delta\cup\{0\}$,  belong to $J$, the mapping $u$ is a specialization. 
 The universality and uniqueness of the specialization follow immediately. This completes the proof of the lemma.
\end{proof}

From the construction of the ideal $J$, it follows that the ring $U$ is $\Gamma$-graded and  can be written as $$U=U_0+\Sigma_{\alpha\in\Delta} U_{\alpha}.$$
The free associative ring $A\langle X\rangle$ is equipped with a unique involution $*: A\langle X\rangle\to A\langle X\rangle$, such that $x^*=-x$ for any generator $x\in X$. Consequently, every element of $Lie\langle X\rangle$ is
skew-symmetric with respect to $*$. Since $J^*=J$,  the involution $*$ gives rise to an involution on the ring
$U=A\langle X\rangle/J$. We denote this involution by $*$ as well. Moreover, any element from $u(L_{\alpha})$, $\alpha\in\Delta$,
is skew-symmetric.

\section{A Lie Isomorphism is a Specialization}
Let $A$ be an associative ring satisfying the assumptions of Theorem \ref{main} and Theorem \ref{main2}. In Lemma~\ref{example-graded}, we showed that the Lie ring $[A,A]$ is $\Delta$-graded, where $\Delta=A_2$ in the case of Theorem \ref{main},  or 
$\Delta=A_3$ in the  case of Theorem \ref{main2}. Specifically, $$[A,A]_{\omega_i-\omega_j}=e_i Ae_j=A_{ij}, \quad 1\leq i\neq j\leq 3 \text{\ or\ } 4.$$ For any element $a\in A$, the components $a_{ij}=e_i a e_j\in A_{ij}$ are referred to as its \emph{Peirce components}. We will also use the notation $a_{ij}$ for elements
in $A_{ij}$.

Let $B$ be an associative ring and let $\varphi:[A,A]\to[B,B]$ be a Lie isomorphism.
\begin{prop}\label{speciality}
	The Lie isomorphism $\varphi$ is a specialization.
\end{prop}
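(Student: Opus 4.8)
The plan is to invoke Lemma~\ref{tech-lemma}, which reduces the claim to a statement about products of length two. Viewing $\varphi$ (precomposed with the universal graded central extension $u$) as a Lie homomorphism into $B^{(-)}$, it suffices to prove
$$\varphi(A_\alpha)\,\varphi(A_\beta)=(0)\quad\text{in }B\qquad\text{whenever }\alpha,\beta\in\Delta,\ \alpha+\beta\notin\Delta\cup\{0\}.$$
In Peirce terms, writing $\alpha=\omega_i-\omega_j$, $\beta=\omega_k-\omega_l$, the condition $\alpha+\beta\notin\Delta\cup\{0\}$ is exactly $j\neq k$ and $i\neq l$, i.e. the pairs $A_{ij}=e_iAe_j$ and $A_{kl}=e_kAe_l$ that annihilate each other on both sides in $A$. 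Up to the symmetry $\alpha\leftrightarrow\beta$ these split into the cases: $\{i,j\}\cap\{k,l\}=\emptyset$ (possible only for $\Delta=A_3$, i.e. under Theorem~\ref{main2}); a common source $i=k$, $j\neq l$; a common target $j=l$, $i\neq k$; and the diagonal case $\alpha=\beta$ (which includes showing $\varphi(A_{ij})^2=(0)$, since $2\alpha\notin\Delta\cup\{0\}$ in type $A$). In every case $[A_\alpha,A_\beta]=(0)$, so the images commute in $B$; the whole difficulty is upgrading commuting to annihilating.

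The main device I would use is the fullness of the idempotents together with the third index supplied by $n\geq 3$. As in the proof of Lemma~\ref{example-graded}, for $m\notin\{i,j\}$ one has $A_{ij}=[A_{im},A_{mj}]$, and, since $Ae_mA=A$, the identity component resolves as $e_i=\sum_t x_ty_t$ with $x_t\in A_{im}$, $y_t\in A_{mi}$. The elements $h_t=[x_t,y_t]\in[A,A]_0$ then act on Peirce components as \emph{diagonal projectors}: a direct Peirce bookkeeping shows, for instance, that $[h_t,c]=(x_ty_t)c$ for $c\in A_{ij'}$ provided the stray term $c\,(y_tx_t)$ lands in a product of orthogonal blocks, which is arranged by choosing $m$ outside the support $\{i,j'\}$ of $c$; summing gives $c=\sum_t[h_t,c]$. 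Applying $\varphi$ transports each such identity to $B$, e.g. $\varphi(c)=\sum_t[\varphi(h_t),\varphi(c)]$. Feeding these resolutions into $\varphi(a)\varphi(b)$ and repeatedly using the Jacobi identity, I would rewrite the product as a sum of associative monomials in images $\varphi(x_t),\varphi(y_t),\dots$ in which two factors coming from mutually orthogonal blocks are forced adjacent, whence they commute and, by an induction on the length, collapse to zero.

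The hard part will be exactly this last collapse in the $A_2$ case $n=3$ (Theorem~\ref{main}), where no orthogonal pair has disjoint index support: every component used to build $a$ through the third index is itself orthogonal to $b$, so the naive attempt to write $a=\sum_t[c_t,d_t]$ with $c_t,d_t$ \emph{both} commuting with $b$ is circular --- it merely reproduces $\varphi(a)\varphi(b)=\varphi(b)\varphi(a)$. To break the loop one must retain the genuinely non-commuting third-index factors, namely those whose mutual product lands on the diagonal (such as $A_{32}\cdot A_{23}\subseteq A_{33}$), and cancel their contributions only after summation, tracking Peirce degrees through several nested bracket expansions. Once all length-two products are shown to vanish, Lemma~\ref{tech-lemma} promotes this to $\varphi(A_{\alpha_1})\cdots\varphi(A_{\alpha_m})=(0)$ for every $\alpha_1+\dots+\alpha_m\notin\Delta\cup\{0\}$, i.e. $\varphi$ is a specialization.
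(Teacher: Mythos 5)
Your reduction via Lemma~\ref{tech-lemma} to length-two products and your case enumeration are exactly right, and you have correctly located the crux: $[A_\alpha,A_\beta]=(0)$ only gives commutation of the images in $B$, and commuting must somehow be upgraded to annihilating. But your proposed device does not achieve that upgrade, and you say so yourself: rewriting $\varphi(a)\varphi(b)$ inside $B$ using resolutions $\varphi(c)=\sum_t[\varphi(h_t),\varphi(c)]$ and the Jacobi identity only ever produces monomials whose adjacent orthogonal-block factors \emph{commute}; nothing in this calculus makes them vanish, and your closing paragraph concedes that the $A_2$ case remains an unresolved ``loop.'' As it stands, the proposal is a plan with the essential mechanism missing, not a proof.

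The idea you are missing is that $\varphi$ is an \emph{isomorphism} onto $[B,B]$, which the paper exploits by pulling products back into $A$ rather than trying to collapse them inside $B$. First (Lemma~\ref{product-of-two-in-commutator}) one writes one factor as a bracket, e.g.\ $A_{ij}=[A_{ik},A_{kj}]$, and uses the derivation identity \eqref{derivation-rel} to exhibit $\varphi(A_\alpha)\varphi(A_\beta)$ as a subset of $[B,B]$; hence $Y=\varphi^{-1}\bigl(\varphi(A_\alpha)\varphi(A_\beta)\bigr)$ is a well-defined subset of $[A,A]$, amenable to Peirce analysis. Second, one extracts constraints on $Y$: it commutes with the appropriate blocks $A_{pq}$ (see \eqref{condition 1}), and --- this is precisely what breaks your circularity --- the element $h_{ik}$ acts on a product of two $\operatorname{ad}$-eigenvalue-$1$ factors with eigenvalue $2$, i.e.\ $[h_{ik},x]=2x$ for $x\in\varphi^{-1}\bigl(\varphi(A_{ij})\varphi(A_{ij})\bigr)$ (see \eqref{condition 2} and \eqref{double-proof}), which is incompatible with any off-diagonal Peirce component, since those have eigenvalue $0$ or $1$ under $\operatorname{ad}(h_{ik})$. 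Combining this with the fullness statement, Lemma~\ref{zero-commutator-implies-zero}, forces $Y\subseteq Diag(A)$ (Lemmas~\ref{commutator-belongs-diagonal} and \ref{preimage-in-diagonal}); then intersecting with containments such as $\varphi(A_{ij})\varphi(A_{ij})\subseteq[\varphi(A_{ik}),\varphi(A_{kj})\varphi(A_{ij})]\subseteq\varphi(A_{ik})$ and using $Diag(A)\cap A_{ik}=(0)$ yields actual vanishing (Lemmas~\ref{varphi-square-zero}, \ref{three-different-zero}, and the final four-index lemma). Without the pull-back through $\varphi^{-1}$ and the eigenvalue-$2$ relation, no amount of bracket bookkeeping in $B$ alone will close your $A_2$ case.
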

\begin{lem}\label{product-of-two-in-commutator}
	If $\alpha, \beta\in\Delta$ and $\alpha+\beta\notin \Delta\cup\{0\}$, then 
	$\varphi(A_{\alpha})\varphi(A_{\beta})\subseteq [B,B]$.
\end{lem}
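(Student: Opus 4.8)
The plan is to fix single elements $x\in A_\alpha$ and $y\in A_\beta$ and prove $\varphi(x)\varphi(y)\in[B,B]$; the stated inclusion then follows by additivity of $\varphi$. Write $\alpha=\omega_i-\omega_j$ and $\beta=\omega_k-\omega_l$. Two facts will be used repeatedly. First, since $[A,A]$ is $\Delta$-graded (Lemma~\ref{example-graded}) and $\varphi$ is a Lie homomorphism, for $u\in A_\gamma$, $v\in A_\delta$ we have $[\varphi(u),\varphi(v)]=\varphi([u,v])$, and $[u,v]=0$ whenever $\gamma+\delta\notin\Delta\cup\{0\}$ (indeed $uv=vu=0$ already follows from the orthogonality of the Peirce idempotents $e_1,\dots,e_n$). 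Second, I record that the hypothesis $\alpha+\beta\notin\Delta\cup\{0\}$ forces $j\neq k$ and $l\neq i$: if $j=k$ then $\alpha+\beta=\omega_i-\omega_l\in\Delta\cup\{0\}$, and if $l=i$ then $\alpha+\beta=\omega_k-\omega_j\in\Delta\cup\{0\}$, in either case contradicting the hypothesis.

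Next I exploit fullness of the idempotents. For any index $m\notin\{i,j\}$ one has $A_{ij}=[A_{im},A_{mj}]$: indeed $A_{im}A_{mj}=e_iAe_mAe_j=e_iAe_j=A_{ij}$ because $Ae_mA=A$, while $A_{mj}A_{im}=0$ by orthogonality, so $[A_{im},A_{mj}]=A_{im}A_{mj}=A_{ij}$. Hence I may write $x=\sum_r[a_r,b_r]$ with $a_r\in A_{im}$, $b_r\in A_{mj}$, and therefore $\varphi(x)=\sum_r[\varphi(a_r),\varphi(b_r)]$. Applying the elementary associative identity $[\varphi(a_r),\varphi(b_r)]\,\varphi(y)=[\varphi(a_r),\varphi(b_r)\varphi(y)]-\varphi(b_r)\,[\varphi(a_r),\varphi(y)]$ and summing yields
\[
\varphi(x)\varphi(y)=\sum_r[\varphi(a_r),\varphi(b_r)\varphi(y)]-\sum_r\varphi(b_r)\,\varphi([a_r,y]).
\]
The first sum lies in $[B,B]$, being a sum of commutators of elements of $B$. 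If $m$ can be chosen so that $[a_r,y]=0$, the second sum vanishes and $\varphi(x)\varphi(y)\in[B,B]$. Now $a_r\in e_iAe_m$ and $y\in e_kAe_l$, so $a_ry=0$ once $m\neq k$ and $ya_r=0$ since $l\neq i$; thus $m\neq k$ suffices.

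Finally I must secure such a choice of $m\notin\{i,j\}$. The mirror identity $[\varphi(a_r),\varphi(b_r)]\varphi(y)=\varphi(a_r)\varphi([b_r,y])-[\varphi(b_r),\varphi(a_r)\varphi(y)]$ gives the same conclusion provided instead $[b_r,y]=0$, which holds once $m\neq l$ (using $j\neq k$, automatic). So it is enough to find $m\notin\{i,j\}$ with $m\neq k$, or else $m\notin\{i,j\}$ with $m\neq l$. When $n\geq 4$ an index outside $\{i,j,k\}$ always exists and the first identity applies. The delicate case is $n=3$ (type $A_2$), where the only candidate is the unique $m\in\{1,2,3\}\setminus\{i,j\}$: both identities fail simultaneously only if $m=k$ and $m=l$, forcing $k=l$, which contradicts $k\neq l$. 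Hence one of the two identities always applies, and $\varphi(x)\varphi(y)\in[B,B]$. I expect this index bookkeeping in the rank-two case to be the only genuine obstacle: it is exactly where the assumption of three idempotents is used sharply, and where the two Leibniz-type identities must be played off against each other.
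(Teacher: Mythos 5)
Your proof is correct and takes essentially the same route as the paper's: you decompose one factor as $A_{ij}=[A_{im},A_{mj}]$ using fullness of $e_m$ and apply the Leibniz-type identity \eqref{derivation-rel}, killing the correction term $\varphi(b_r)\varphi([a_r,y])$ via Peirce orthogonality. The only difference is organizational --- the paper treats the cases $\varphi(A_{ij})^2$, $\varphi(A_{ij})\varphi(A_{ik})$, $\varphi(A_{ji})\varphi(A_{ki})$ (dispatched ``similarly,'' which is exactly your mirror identity) and $\varphi(A_{12})\varphi(A_{34})$ separately, whereas you unify them through the bookkeeping conditions $m\neq k$ or $m\neq l$.
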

\begin{proof}
	For any $i,j,k,l,m,n$, we have:
	\begin{equation}\label{derivation-rel}
		[\varphi(a_{ij}),\varphi(a_{kl})\varphi(a_{mn})]=[\varphi(a_{ij}),\varphi(a_{kl})]\varphi(a_{mn})
		+\varphi(a_{kl})[\varphi(a_{ij}),\varphi(a_{mn})].
	\end{equation}
	Let $i\neq j$. We aim to show that $\varphi(A_{ij})^2\subseteq [B,B]$. There exists $k$  distinct  from both $i$ and $j$. Thus,  
	$A_{ij}=[A_{ik},A_{kj}]$, and 
	$$\varphi(A_{ij})^2=[\varphi(A_{ik}),\varphi(A_{kj})]\varphi(A_{ij})\stackrel{(\ref{derivation-rel})}{\subseteq}
	[\varphi(A_{ik}), \varphi(A_{kj})\varphi(A_{ij})]\subset [B,B].$$
	Now, let $i,j,k$ be distinct. We  show that $\varphi(A_{ij})\varphi(A_{ik})\subseteq [B,B]$. As before, $A_{ij}=[A_{ik}, A_{kj}]$. Hence:
	$$
	\varphi(A_{ij})\varphi(A_{ik})=[\varphi(A_{ik}),\varphi(A_{kj})]\varphi(A_{ik})\stackrel{(\ref{derivation-rel})}{\subseteq}[\varphi(A_{ik}),\varphi(A_{kj})\varphi(A_{ik})]\subseteq [B,B].
	$$
	Similarly, $\varphi(A_{ji})\varphi(A_{ki})\subseteq [B,B]$. 
	
	It remains to prove that $\varphi(A_{12})\varphi(A_{34})\subseteq [B,B]$ under
	the assumption of Theorem \ref{main2}. Since $A_{12}=[A_{14},A_{42}]$, we have: 
	$$
	\varphi(A_{12})\varphi(A_{34})=[\varphi(A_{14}),\varphi(A_{42})]\varphi(A_{34})\stackrel{(\ref{derivation-rel})}{\subseteq}[\varphi(A_{14}),\varphi(A_{42})\varphi(A_{34})]\subseteq [B,B].
	$$
	This completes the proof of the lemma.
\end{proof}
Let $i\neq j$, $1\leq i\leq 3$. Since $e_i\in A= Ae_jA$, it follows that there exist elements $a_{ij}^{(\mu)}\in A_{ij}$,  and  $b_{ji}^{(\mu)}\in A_{ji}$, such that:
\begin{equation}\label{def-e} e_i=\sum_{\mu} a_{ij}^{(\mu)}b_{ji}^{(\mu)}.\end{equation} Define $h_{ij}\in[A,A]$ as:
\begin{equation*}\label{def-h} h_{ij}=\sum_{\mu} [a_{ij}^{(\mu)},b_{ji}^{(\mu)}]\in e_i+e_j Ae_j.
\end{equation*}
\begin{lem}\label{zero-commutator-implies-zero}
	Let $k$ be distinct from $i$ and $j$, $1\leq j\leq 3$, and let $x_{ij}$ be an element from $A_{ij}$ such that $[x_{ij}, A_{jk}]=(0)$. Then $x_{ij}=0$.
\end{lem}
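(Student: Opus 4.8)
The plan is to convert the Lie-theoretic hypothesis into a purely associative annihilation statement, and then bootstrap that statement using the full idempotent property. First I would compute $[x_{ij},y]$ for an arbitrary element $y=e_j a e_k\in A_{jk}$. The product $y\,x_{ij}=e_j a e_k\cdot e_i x e_j$ contains the factor $e_k e_i$, which vanishes by orthogonality because $k\neq i$; hence the commutator collapses to a single product,
$$
[x_{ij},y]=x_{ij}\,y=e_i x e_j a e_k .
$$
Letting $a$ range over $A$, the element $e_j a e_k$ ranges over all of $A_{jk}$, so the hypothesis $[x_{ij},A_{jk}]=(0)$ is equivalent to the associative identity $x_{ij}A e_k=(0)$.

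Next I would invoke the fullness of $e_k$. Multiplying $x_{ij}A e_k=(0)$ on the right by $A$ gives $x_{ij}A e_k A=(0)$, and since $A e_k A=A$ this yields $x_{ij}A=x_{ij}(A e_k A)=(x_{ij}A e_k)A=(0)$. In the setting of Theorem~\ref{main2} the idempotent $e_k$ is full for every admissible index $k$ by the standing assumption $A e_i A=A e_4 A=A$; in the setting of Theorem~\ref{main} this is immediate. Finally, since $1\leq j\leq 3$, the idempotent $e_j$ lies in $A$ itself, and therefore $x_{ij}=x_{ij}e_j\in x_{ij}A=(0)$, which is the desired conclusion.

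There is no serious obstacle here: once the commutator is computed, the lemma is essentially a one-line reduction. The two points that genuinely require care are, first, the vanishing of the term $y\,x_{ij}$, which is exactly where the orthogonality $e_k e_i=0$ and the hypothesis $k\neq i$ are used; and second, the non-unital case, where the restriction $1\leq j\leq 3$ is precisely what guarantees $e_j\in A$ and hence licenses the passage from $x_{ij}A=(0)$ to $x_{ij}=0$. The full idempotent $e_4$ could not serve in this last step, as it lies only in the unital hull $A+\mathbb{Z}\cdot 1$; this explains why the restriction is imposed on $j$ rather than on $i$ or $k$.
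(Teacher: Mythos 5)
Your proposal is correct and follows essentially the same route as the paper's proof: reduce $[x_{ij},A_{jk}]=x_{ij}Ae_k=(0)$ via orthogonality, use fullness $Ae_kA=A$ to get $x_{ij}A=(0)$, and conclude $x_{ij}=x_{ij}e_j=0$. Your additional remarks on why $k\neq i$ kills the term $y\,x_{ij}$ and why the restriction $1\leq j\leq 3$ (ensuring $e_j\in A$) is needed in the non-unital case are accurate elaborations of points the paper leaves implicit.
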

\begin{proof}
	We have: $[x_{ij}, A_{jk}]=x_{ij}  A_{jk}=x_{ij}A e_k=(0)$. Since $A=Ae_k A$, it follows that $x_{ij}A=(0)$, and in particular, $x_{ij} e_j=x_{ij}=0$. 
	This completes the proof of lemma. 
\end{proof}
\begin{Rem}\label{symmetry}  Similarly, we can prove the following statement: if $k$ is distinct from $i$ and $j$, $1\leq j\leq 3$, $x_{ji}\in A_{ji}$, and $[x_{ji}, A_{kj}]=(0)$, then $x_{ji}=0$.
\end{Rem}
Denote $Diag(A)=\sum_i A_{ii}$.
\begin{lem}\label{commutator-belongs-diagonal} For arbitrary $i\neq j$, $$\varphi^{-1}(\varphi(A_{ij})\varphi(A_{ij}))\subseteq Diag(A).$$
\end{lem}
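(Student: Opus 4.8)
The plan is to show that every off-diagonal Peirce component of
$c:=\varphi^{-1}\bigl(\varphi(x_{ij})\varphi(y_{ij})\bigr)$ vanishes, where $x_{ij},y_{ij}\in A_{ij}$. First, $c$ is well defined: since $(\omega_i-\omega_j)+(\omega_i-\omega_j)\notin\Delta\cup\{0\}$, Lemma~\ref{product-of-two-in-commutator} gives $\varphi(A_{ij})\varphi(A_{ij})\subseteq[B,B]$, so $\varphi^{-1}$ applies. The basic tool is the identity obtained by pushing (\ref{derivation-rel}) through the Lie isomorphism: for any $z\in[A,A]$,
$$\varphi([z,c])=\varphi([z,x_{ij}])\,\varphi(y_{ij})+\varphi(x_{ij})\,\varphi([z,y_{ij}]).$$
Writing $c=\sum_{p,q}c_{pq}$ in Peirce components, the goal is $c_{pq}=0$ for all $p\neq q$.

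I would first dispose of every off-diagonal block except the one of degree $\omega_i-\omega_j$. If $z\in A_{mn}$ with $m\neq j$ and $n\neq i$, then $[z,x_{ij}]=[z,y_{ij}]=0$ (the inner idempotents $e_n,e_i$ resp. $e_j,e_m$ are orthogonal), so the displayed identity yields $[z,c]=0$. Expanding this in Peirce components and isolating a single block gives $c_{pq}A_{qr}=0$ or $A_{sp}c_{pq}=0$ for a suitable third index; since all of $e_1,\dots,e_n$ are full, Lemma~\ref{zero-commutator-implies-zero} and Remark~\ref{symmetry} then force $c_{pq}=0$. Ranging over these commuting probes kills $c_{pq}$ for every $(p,q)\neq(i,j)$. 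The block $A_{ij}$ itself is unreachable this way: detecting $c_{ij}$ requires a $z$ lying in row $j$ or column $i$, and no such $z$ commutes with $x_{ij},y_{ij}$.

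The crux is therefore the \emph{same-degree} component $c_{ij}$, where I would abandon commuting probes and rewrite the product instead. Fix $k\neq i,j$ and factor $x_{ij}=\sum_\mu[a_\mu,b_\mu]$ with $a_\mu\in A_{ik}$, $b_\mu\in A_{kj}$ (possible as $A_{ij}=[A_{ik},A_{kj}]$). Because $a_\mu\in A_{ik}$ and $y_{ij}\in A_{ij}$ have orthogonal inner idempotents, $[a_\mu,y_{ij}]=0$, so (\ref{derivation-rel}) gives
$$\varphi(x_{ij})\varphi(y_{ij})=\sum_\mu\bigl[\varphi(a_\mu),\,\varphi(b_\mu)\varphi(y_{ij})\bigr].$$
Each $\varphi(b_\mu)\varphi(y_{ij})$ again lies in $[B,B]$ by Lemma~\ref{product-of-two-in-commutator}, whence $c=\sum_\mu\bigl[a_\mu,\varphi^{-1}(\varphi(b_\mu)\varphi(y_{ij}))\bigr]\in[A_{ik},[A,A]]$. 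By the $\Gamma$-grading this confines $c$ to the $i$-th row and the $k$-th column; a symmetric factorization of $y_{ij}$ (using $[x_{ij},b'_\nu]=0$) confines $c$ to the $j$-th column and the $k$-th row. Comparing the Peirce supports of the auxiliary mixed products that appear, such as $\varphi(A_{kj})\varphi(A_{ij})$ and $\varphi(A_{ij})\varphi(A_{ik})$, forces the surviving $A_{ij}$-part to vanish, i.e. $c_{ij}=0$.

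The main obstacle is exactly this last step, and its difficulty is governed by how many third indices $k$ are available. When a \emph{second} middle index exists (the $A_3$/Theorem~\ref{main2} situation), intersecting the row/column constraints coming from two choices of $k$ pins $c$ down so tightly that one in fact obtains the stronger conclusion $\varphi(A_{ij})\varphi(A_{ij})=(0)$. In the $A_2$/Theorem~\ref{main} situation there is a unique third index, the support bookkeeping only confines $c$ to $A_{ij}+A_{kk}$, and removing the $A_{ij}$-part while (correctly) allowing a diagonal remainder requires a more delicate argument that again leans essentially on \emph{all} the idempotents being full. This is why the statement is phrased as containment in $Diag(A)$ rather than as outright vanishing.
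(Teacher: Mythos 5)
Your commuting-probe bookkeeping is sound as far as it goes (and in the $A_2$ case it does kill every off-diagonal block other than $A_{ij}$), but the proposal has two genuine gaps, and the second is the heart of the lemma. First, the claim that the probes dispose of \emph{every} block $(p,q)\neq(i,j)$ fails under the assumptions of Theorem~\ref{main2}: Lemma~\ref{zero-commutator-implies-zero} and Remark~\ref{symmetry} require the middle idempotent to be one of $e_1,e_2,e_3$ (their proofs end with $x_{ij}e_j=x_{ij}$, resp.\ $e_jx_{ji}=x_{ji}$, and $e_4$ need not lie in $A$). Concretely, for $(i,j)=(1,2)$ the block $c_{14}$ admits no left probe at all (a probe $z\in A_{m1}$ has $n=i$, so it does not commute with $x_{12},y_{12}$), and the right probes only yield $c_{14}A_{42}=c_{14}A_{43}=(0)$, hence $c_{14}A=(0)$ by fullness --- which, absent any annihilator hypothesis, does not force $c_{14}=0$. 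The paper kills exactly these blocks, and the same-degree block $c_{ij}$, by a mechanism your proposal never invokes: the elements $h_{ik}=\sum_\mu[a_{ik}^{(\mu)},b_{ki}^{(\mu)}]$ built from the fullness decomposition \eqref{def-e}. Pushing $[h_{ik},\cdot]$ through the product via \eqref{derivation-rel} shows that $\operatorname{ad}(h_{ik})$ acts as multiplication by $2$ on all of $X=\varphi^{-1}(\varphi(A_{ij})\varphi(A_{ij}))$ (each factor contributes eigenvalue $1$; see \eqref{double-proof}), while it acts as the identity on $A_{ij}$ and on $A_{i4}$; subtracting gives $x_{ij}=0$ and $x_{i4}=0$ with integer coefficient $1$, so no torsion assumption is needed. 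This weight-doubling trick is the one place the lemma uses fullness beyond probe bookkeeping, and it is absent from your argument.

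Second, your treatment of the crux $c_{ij}$ is a plan, not a proof. The rewriting $x_{ij}=\sum_\mu[a_\mu,b_\mu]$ with $[a_\mu,y_{ij}]=0$ is correct and does confine $c$ to $A_{ij}+A_{kk}$ (and, intersecting over two middle indices in the $A_3$ case, to $A_{ij}$), but the decisive step --- ``comparing the Peirce supports of the auxiliary mixed products \dots forces $c_{ij}=0$'' --- is precisely what must be argued, and your closing paragraph concedes it is open. Moreover, the vanishing of products such as $\varphi(A_{ij})\varphi(A_{ik})$ and $\varphi(A_{ij})\varphi(A_{ij})$ is the content of Lemmas~\ref{varphi-square-zero} and~\ref{three-different-zero}, which the paper proves \emph{after}, and \emph{from}, the present lemma together with Lemma~\ref{preimage-in-diagonal}; so your suggestion that in the $A_3$ case one already obtains $\varphi(A_{ij})\varphi(A_{ij})=(0)$ at this stage inverts the paper's logical order and is unsupported. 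To close the gap along the paper's lines: establish $[h_{ik},x]=2x$ for all $x\in X$ as in \eqref{double-proof}, then compare eigenvalues componentwise to eliminate $c_{ij}$ and the index-$4$ blocks, and reserve the probe relations (as you used them) for the remaining off-diagonal blocks.
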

\begin{proof}
	Let $X=\varphi^{-1}(\varphi(A_{ij})\varphi(A_{ij}))\subseteq [A,A]$. Since an arbitrary element from $\varphi(A_{ij})\varphi(A_{ij})$
	commutes with any element from $\varphi(A_{ik})$, $k\neq i$, and  $\varphi(A_{tj})$, $t\neq j$, we conclude: 
	\begin{equation}\label{condition 1}
		[X,A_{ik}]=[X,A_{tj}]=(0).
	\end{equation}
	At least one of the indices $i,j$ is less than $4$ (in the case of Theorem \ref{main2}). Let $1\leq i\leq 3$. For any element $a_{ij}\in A_{ij}$,
	$k\neq i,j$, we know $[h_{ik},a_{ij}]=a_{ij}$. Thus, for any $x\in X$, we conclude: 
	\begin{equation}\label{condition 2}[h_{ik},x]=2x.
	\end{equation}
	Indeed, if $x\in X$, there exist $a_{ij}, b_{ij}\in A_{ij}$ such that  $x=\varphi^{-1}(\varphi(a_{ij})\varphi(b_{ij}))$. Then, 
	\begin{equation}\label{double-proof}
		\varphi([h_{ik},x])=[\varphi(h_{ik}),\varphi(a_{ij})\varphi(b_{ij})]\stackrel{(\ref{derivation-rel})}{=}\varphi([h_{ik},a_{ij}])\varphi(b_{ij})+\varphi(a_{ij})\varphi([h_{ik},b_{ij}])=
		2\varphi(x).
	\end{equation}
	If $x\in X$, then the equalities \eqref{condition 1} and \eqref{condition 2} hold for every Peirce component of the element $x$.  Let $k\neq i,j$. We have 
	$[h_{ik}, x_{ij}]=x_{ij}$. In view of \eqref{condition 2}, it implies $x_{ij}=0$. Next, consider the Pierce component $x_{iq}$, $q\neq i,j$. By \eqref{condition 1}, $[x_{iq}, A_{qj}]=(0)$. If $1\leq q\leq 3$, then by Lemma \ref{zero-commutator-implies-zero}, it implies $x_{iq}=0$. Let
	$q=4$ (in the case of Theorem \ref{main2}). Then there exists an index $k$ that is distinct  from $i,j$ and $q$. We have $[h_{ik},x_{i4}]=x_{i4}$,
	which, together with \eqref{condition 2}, implies $x_{i4}=0$. Thus, we have proved that all non-diagonal Peirce components $x_{iq}$, $1\leq q\leq 4$, are equal to $0$.
	Similarly, as shown in  Remark~\ref{symmetry}, all non-diagonal Peirce components $x_{pj}$, $1\leq p\leq 4$, are also equal to $0$.
	
	Let $1\leq j\leq 3$. All Peirce components $x_{pq}$, $p\neq q$, $p=i$ or $q=j$, are equal to zero. Consider a Peirce component
	$x_{pq}$, $p\neq q$, $p\neq i$, $q\neq j$. At least one of the indices $p,q$ is different from $4$. Let $p<4$. If $q\neq i$, then by \eqref{condition 1},
	we have $[A_{ip}, x_{pq}]=(0)$. By Lemma \ref{zero-commutator-implies-zero}, it implies $x_{pq}=0$. Let $q=i$. There exists an index $k$ distinct from $i$ and $p$. By \eqref{condition 1}, $[x_{pi}, A_{ik}]=(0)$. Again, by Lemma \ref{zero-commutator-implies-zero}, we obtain $x_{pi}=0$. It remains to consider the case $j=4$. Consider a Peirce component $x_{pq}$, $p\neq i$. If $q< 4$, then  $[x_{pq},A_{q4}]=(0)$ implies $x_{pq}=0$. If $q=4$,
	then $[A_{ip},x_{p4}]=(0)$ implies $x_{p4}=0$. This completes the proof of the lemma.
\end{proof}
\begin{lem}\label{preimage-in-diagonal}
	For arbitrary distinct indices $i,j,k$, we have $$ \varphi^{-1}(\varphi(A_{ij})\varphi(A_{ik}))\subseteq Diag(A).$$
\end{lem}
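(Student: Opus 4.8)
The plan is to follow the template of Lemma~\ref{commutator-belongs-diagonal}, the difference being that the product $\varphi(A_{ij})\varphi(A_{ik})$ now mixes two \emph{distinct} second indices. First, since $(\omega_i-\omega_j)+(\omega_i-\omega_k)=2\omega_i-\omega_j-\omega_k\notin\Delta\cup\{0\}$, Lemma~\ref{product-of-two-in-commutator} guarantees $\varphi(A_{ij})\varphi(A_{ik})\subseteq[B,B]$, so that $Y:=\varphi^{-1}(\varphi(A_{ij})\varphi(A_{ik}))$ is a well-defined subset of $[A,A]$, and I must show that every $y\in Y$ has vanishing off-diagonal Peirce components. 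Writing $y=\varphi^{-1}(\varphi(a_{ij})\varphi(b_{ik}))$, the engine of the argument is again the Leibniz identity \eqref{derivation-rel}, which makes each $[\varphi(a_{st}),\,\cdot\,]$ a derivation of the associative product in $B$.

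Next I would harvest commutation relations. Whenever $t\neq i$ and $s\notin\{j,k\}$, Peirce orthogonality gives $[a_{st},a_{ij}]=[a_{st},b_{ik}]=0$, so $\varphi(A_{st})$ commutes with $\varphi(a_{ij})\varphi(b_{ik})$ and hence $[A_{st},y]=0$. Decomposing each such vanishing commutator grade by grade, and using the fullness of the $e_m$ together with Lemma~\ref{zero-commutator-implies-zero} and Remark~\ref{symmetry}, I would annihilate on one side and then kill all Peirce components of $y$ except those in ``row $i$'', namely $y_{ij}$ and $y_{ik}$ (in the setting of Theorem~\ref{main2} a few components touching the complementary fourth index also survive this step). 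The point is that the relation $[A_{st},y]=0$ is the analogue of \eqref{condition 1}, but here far fewer $(s,t)$ are admissible than in Lemma~\ref{commutator-belongs-diagonal}, because the two factors occupy two different columns $j$ and $k$.

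The decisive step is to dispose of the row-$i$ components, and here I would reduce to the previous lemma. Using $A_{ik}=[A_{ij},A_{jk}]$ and \eqref{derivation-rel}, one checks that $[y,c_{jk}]\in\varphi^{-1}\!\left(\varphi(A_{ik})\varphi(A_{ik})\right)$, which lies in $Diag(A)$ by Lemma~\ref{commutator-belongs-diagonal}, and symmetrically $[y,c_{kj}]\in\varphi^{-1}\!\left(\varphi(A_{ij})\varphi(A_{ij})\right)\subseteq Diag(A)$. Expanding $[y,c_{jk}]$ directly in Peirce components, its grade-$(\omega_i-\omega_k)$ part equals $y_{ij}c_{jk}$; since the whole commutator is diagonal, this off-diagonal part must vanish, so $y_{ij}A_{jk}=0$ and Lemma~\ref{zero-commutator-implies-zero} yields $y_{ij}=0$, and likewise $[y,c_{kj}]$ gives $y_{ik}=0$. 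Thus $y\in Diag(A)$. This ``landing in the diagonal'' trick, feeding the output of Lemma~\ref{commutator-belongs-diagonal} back in, is the new ingredient relative to that lemma.

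The main obstacle is bookkeeping rather than a single hard idea: the scarcity of clean commutations forces the Peirce components to be eliminated in several overlapping families. The genuinely delicate point is the non-unital case of Theorem~\ref{main2}, where $e_4\notin A$: a one-sided annihilation $y_{pq}A=0$ or $Ay_{pq}=0$ no longer forces $y_{pq}=0$ for a component touching the index $4$, since the final step $y_{pq}=y_{pq}e_q$ (or $e_py_{pq}$) needs the relevant idempotent to lie in $A$. For these components I would bring in the clean eigenvalue relations $[h_{il},y]=2y$ and $[h_{jl},y]=[h_{kl},y]=-y$ (clean precisely because the tail $e_lAe_l$ of $h_{\cdot l}$ misses the indices $i,j,k$ when $l$ is the complementary index), and compare graded components: where the resulting integer coefficient is $\pm1$ the component dies at once, and otherwise the single contaminated term $y_{pq}d_{ll}$ is absorbed by the one-sided annihilation already in hand, giving $y_{pq}=0$ with no torsion hypothesis. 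Throughout I must invoke whichever of Lemma~\ref{zero-commutator-implies-zero} or Remark~\ref{symmetry} has its middle index in $\{1,2,3\}$, which is always possible since at most one of any two distinct indices equals $4$.
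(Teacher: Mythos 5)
Your treatment of the generic case $1\leq i,j,k\leq 3$ is essentially the paper's own proof. The harvested relations $[A_{st},y]=0$ are the paper's \eqref{condition 1}/\eqref{condition 3}; your ``landing in the diagonal'' trick, $[y,c_{jk}]\in\varphi^{-1}\big(\varphi(A_{ik})\varphi(A_{ik})\big)\subseteq Diag(A)$ and its mirror with $c_{kj}$, is exactly how the paper derives $[Y,A_{23}],[Y,A_{32}]\subseteq Diag(A)$ from Lemma~\ref{commutator-belongs-diagonal}; and your patch for the last surviving component $y_{14}$ (the one-sided annihilation $y_{14}A=(0)$ coming from $[Y,A_{42}]=(0)$, then the coefficient-$(\pm1)$ relation with $h_{24}$ whose tail term $y_{14}d_{44}$ is absorbed by that annihilation) is precisely the paper's $h_{24}$ argument. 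So for Theorem~\ref{main}, and for triples with $i,j,k\leq3$ under Theorem~\ref{main2}, your plan closes.

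The gap is in the remaining triples of Theorem~\ref{main2}, where $4\in\{i,j,k\}$ --- cases the lemma must cover, since it is later applied to such triples. First, your relation $[h_{kl},y]=-y$ simply does not exist when $k=4$: the element $h_{4l}$ is undefined because $e_4\notin A$ (the construction of $h_{ij}$ needs $e_i\in A$, i.e. $i\leq 3$); your parenthetical about the tail $e_lAe_l$ presumes $4$ is the \emph{complementary} index, which fails here. Concretely, take $Y=\varphi^{-1}\big(\varphi(A_{12})\varphi(A_{14})\big)$ and its component $y_{42}$. The admissible harvested relations ($s\in\{1,3\}$, $t\neq 1$) yield only $A_{14}y_{42}=A_{34}y_{42}=(0)$ --- left annihilation through $e_4$, with middle index $4$, where neither Lemma~\ref{zero-commutator-implies-zero} nor Remark~\ref{symmetry} applies; your claim that one can ``always'' invoke the variant with middle index in $\{1,2,3\}$ is not right, because the middle index is dictated by the relation you actually possess, not freely chosen. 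The diagonal trick gives no constraint on $y_{42}$ (the relevant brackets with $A_{24}$, $A_{42}$ land in diagonal grades), and the clean eigenvalue relations with complementary index $l=3$ give, at grade $\omega_4-\omega_2$, only $2y_{42}=0$ from $[h_{13},y]=2y$ and the tautology $-y_{42}=-y_{42}$ from $[h_{23},y]=-y$: there is no contaminated $d_{ll}$-term left to absorb, so your dichotomy leaves $y_{42}$ alive in the presence of $2$-torsion. The paper avoids this entirely by a reduction you do not have: for $k=4$ it writes $A_{14}=[A_{13},A_{34}]$, so $\varphi(A_{12})\varphi(A_{14})=[\varphi(A_{12})\varphi(A_{13}),\varphi(A_{34})]$, whence $Y\subseteq[Diag(A),A_{34}]\subseteq A_{34}$ by the already-proven triple $(1,2,3)$, and then $[A_{13},Y]=(0)$ kills $Y$; the case $i=4$ gets its own elimination, ending with $A_{43}a_{34}=(0)\Rightarrow Aa_{34}=(0)\Rightarrow a_{34}=e_3a_{34}=0$, which works because the surviving row index is $3\leq 3$. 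Without a reduction of this kind, your proposal does not close the $k=4$ (or $j=4$) case.
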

\begin{proof}
	At first, suppose that $1\leq i,j,k\leq 3$. Without loss of generality,  consider $Y=\varphi^{-1}(\varphi(A_{12})\varphi(A_{13}))$. As in the proof of
	Lemma \ref{commutator-belongs-diagonal},  
	\begin{equation}\label{condition 3}
		[Y,A_{1t}]=(0), \  t\neq 1.
	\end{equation}
	Then 
	$$
	[\varphi(Y),\varphi(A_{23})]=[\varphi(A_{12})\varphi(A_{13}),\varphi(A_{23})]\stackrel{(\ref{derivation-rel})}{\subseteq}[\varphi(A_{12}),\varphi(A_{23})]\varphi(A_{13})
	\subseteq \varphi(A_{13})\varphi(A_{13}).
	$$
	By Lemma \ref{commutator-belongs-diagonal}, $[Y, A_{23}]\subseteq Diag(A)$.  Similarly,  $[Y, A_{32}]\subseteq Diag(A)$. Let $Y_{ij}=e_i Ye_j$
	be the Pierce components of $Y$. From  $[ A_{13}, Y]=(0)$
	and Lemma \ref{zero-commutator-implies-zero}, it follows that $Y_{32}=(0)$. Similarly, $Y_{23}=(0)$. The diagonal condition implies:
	\begin{equation*}\label{condition 4}
		[Y,A_{23}]=[Y,A_{32}]=(0).
	\end{equation*}
	By Lemma \ref{zero-commutator-implies-zero}, $[Y_{12}, A_{23}]=(0)$ implies $Y_{12}=(0)$. Similarly, $[Y_{21}, A_{13}]=(0)$
	implies $Y_{21}=(0)$. Furthermore, $[Y_{13}, A_{32}]=(0)$ implies $Y_{13}=0$, and $[Y_{31}, A_{12}]=(0)$ implies $Y_{31}=(0)$. Additionally,
	$$[Y_{41}, A_{12}]=[Y_{42}, A_{23}]=[Y_{43}, A_{32}]=(0)$$ implies $Y_{41}=Y_{42}=Y_{43}=(0)$, and  $[A_{32}, Y_{24}]=[A_{23}, Y_{34}]=(0)$
	implies $Y_{24}=Y_{34}=(0)$. We have proved that $Y\subseteq Diag(A)+A_{14}$. Let $y\in Y$, $y=d+a_{14}$, where $d\in Diag(A)$, $a_{14}\in A_{14}$. Furthermore,
	$$\varphi([Y, A_{42}])=[\varphi(A_{12})\varphi(A_{13}),\varphi(A_{42})]=(0)$$ implies $[Y,A_{42}]=(0)$. It  then  follows that $a_{14} A_{42}=(0)$. 
	Since $A=Ae_2 A$, we conclude that $a_{14} A=(0)$. Recall that the element $h_{24}$
	lies in $[A_{24}, A_{42}]$, $h_{24}\in e_2+e_4 A e_4$. Hence, $[a_{14}, h_{24}]=0$. On the other hand, for an arbitrary element 
	$x\in\varphi(A_{12}) \varphi(A_{13})$, the arguments in \eqref{double-proof}  imply $[x,\varphi(h_{24})]=x$. This implies $a_{14}=0$, and 
	therefore, $Y\subseteq Diag(A)$. 
	
	Now we  consider the case $4\in\{i,j,k\}$. Let $k=4$ (the case $j=4$ is similar). Without loss of generality, consider 
	$$\varphi(A_{12})\varphi(A_{14}), \quad Y=\varphi^{-1}(\varphi(A_{12})\varphi(A_{14})).$$ We have $A_{14}=[A_{13}, A_{34}]$, and \eqref{derivation-rel} provides
	$\varphi(A_{12})\varphi(A_{14})=[\varphi(A_{12})\varphi(A_{13}),\varphi(A_{34})]$. The first part of the proof of this lemma implies 
	$Y\subseteq [Diag(A),A_{34}]\subseteq A_{34}$.
	Next, $[A_{13}, Y]=(0)$ and Lemma \ref{zero-commutator-implies-zero} imply $Y=(0)$. 
	
	It remains to consider the case $i=4$. Let 
	$Y=\varphi^{-1}(\varphi(A_{41})\varphi(A_{42}))$. We have: 
	\begin{equation}\label{condition 5}
		[Y, A_{31}]=[Y, A_{32}]=(0),
	\end{equation}
	\begin{equation}\label{condition 6}
		[Y, A_{4j}]=(0), \quad j\neq 4.
	\end{equation}
	Equalities \eqref{condition 5} and Lemma \ref{zero-commutator-implies-zero} imply $Y_{12}=Y_{21}=(0)$. On the other hand, by 
	Lemma~\ref{commutator-belongs-diagonal}, $[Y,A_{12}], [Y, A_{21}]\subseteq Diag(A)$. This implies: 
	\begin{equation}\label{condition 7}
		[Y, A_{12}]=[Y, A_{21}]=(0).
	\end{equation}
	Now, equalities \eqref{condition 5}-\eqref{condition 7}, together with Lemma \ref{zero-commutator-implies-zero}, imply that 
	$Y_{ij}=(0)$ for all $1\leq i\neq j\leq 4$, except for $i=3$ and $j=4$. Thus, we proved that $Y\subseteq Diag(A)+A_{34}$. Let $y\in Y,$
	$y=d+a_{34}$, where $d\in Diag(A)$, $a_{34}\in A_{34}$. From $[A_{43}, Y]=(0)$, it follows that $A_{43} a_{34}=(0)$, which implies 
	$Aa_{34}=(0)$, and therefore $e_3 a_{34}=a_{34}=0$. We proved that $Y\subseteq Diag(A)$. This completes the proof of the lemma.
\end{proof}
\begin{lem}\label{varphi-square-zero}
	For arbitrary indices $i\neq j$, we have $\varphi(A_{ij})\varphi(A_{ij})=(0)$.
\end{lem}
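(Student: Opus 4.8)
The plan is to pin the element $\xi:=\varphi^{-1}\big(\varphi(a_{ij})\varphi(b_{ij})\big)$ inside two subspaces of $[A,A]$ whose intersection is trivial. Since $2(\omega_i-\omega_j)\notin\Delta\cup\{0\}$, Lemma~\ref{product-of-two-in-commutator} shows $\varphi(A_{ij})\varphi(A_{ij})\subseteq[B,B]$, so $\xi$ is well defined, and Lemma~\ref{commutator-belongs-diagonal} gives $\xi\in Diag(A)$. The strategy is then to rewrite $\varphi(a_{ij})\varphi(b_{ij})$ as a Lie bracket of $\varphi(A_{kj})$ against an element whose preimage is diagonal; since bracketing a diagonal element with $A_{kj}$ stays inside $A_{kj}$, this will force $\xi\in A_{kj}\cap Diag(A)=(0)$.

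Fix a third index $k\neq i,j$, available because $n\geq 3$. As $e_k$ is full, $A_{ij}=[A_{ik},A_{kj}]=A_{ik}A_{kj}$ (Corollary~\ref{Corollary_4+}, using $A_{kj}A_{ik}=(0)$), so write $b_{ij}=\sum_\nu s_{ik}^{(\nu)}t_{kj}^{(\nu)}$ with $s_{ik}^{(\nu)}\in A_{ik}$ and $t_{kj}^{(\nu)}\in A_{kj}$. Because $t_{kj}^{(\nu)}s_{ik}^{(\nu)}=0$, each summand is a commutator, $s_{ik}^{(\nu)}t_{kj}^{(\nu)}=[s_{ik}^{(\nu)},t_{kj}^{(\nu)}]$, whence $\varphi(b_{ij})=\sum_\nu[\varphi(s_{ik}^{(\nu)}),\varphi(t_{kj}^{(\nu)})]$. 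I would now invoke $[A_{ij},A_{kj}]=(0)$, that is $[t_{kj}^{(\nu)},a_{ij}]=0$: feeding $\varphi(t_{kj}^{(\nu)})$ and the product $\varphi(a_{ij})\varphi(s_{ik}^{(\nu)})$ into the Leibniz identity~\eqref{derivation-rel} annihilates the term carrying $[\varphi(t_{kj}^{(\nu)}),\varphi(a_{ij})]$ and leaves
\begin{equation*}
\varphi(a_{ij})\varphi(b_{ij})=\sum_\nu\big[\varphi(a_{ij})\varphi(s_{ik}^{(\nu)}),\ \varphi(t_{kj}^{(\nu)})\big].
\end{equation*}

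To finish, observe that each inner factor $\varphi(a_{ij})\varphi(s_{ik}^{(\nu)})$ is a product of the same-first-index type $\varphi(A_{ij})\varphi(A_{ik})$: by Lemma~\ref{product-of-two-in-commutator} it lies in $[B,B]$, and by Lemma~\ref{preimage-in-diagonal} its preimage $\eta^{(\nu)}:=\varphi^{-1}\big(\varphi(a_{ij})\varphi(s_{ik}^{(\nu)})\big)$ lies in $Diag(A)$. Applying $\varphi^{-1}$ to the displayed identity gives $\xi=\sum_\nu[\eta^{(\nu)},t_{kj}^{(\nu)}]$, and since $[Diag(A),A_{kj}]\subseteq A_{kj}$ we get $\xi\in A_{kj}$. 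Together with $\xi\in Diag(A)$ and $A_{kj}\cap Diag(A)=(0)$ this forces $\xi=0$, i.e. $\varphi(a_{ij})\varphi(b_{ij})=0$; as $a_{ij},b_{ij}\in A_{ij}$ are arbitrary, $\varphi(A_{ij})\varphi(A_{ij})=(0)$.

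The delicate point is the rewriting in the second paragraph: one must split exactly one of the two equal factors via $A_{ij}=A_{ik}A_{kj}$ and pull the $A_{kj}$-component outside as a Lie bracket, which is legitimate only because $[A_{ij},A_{kj}]=(0)$ and because the residual inner product $\varphi(A_{ij})\varphi(A_{ik})$ is precisely the same-first-index shape controlled by Lemma~\ref{preimage-in-diagonal}. Everything else is a clean two-location argument, and---crucially for the paper's aims---no torsion or primeness assumption enters.
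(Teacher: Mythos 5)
Your proof is correct and takes essentially the same route as the paper: both rewrite one $\varphi(A_{ij})$ factor as a bracket of $\varphi(A_{ik})$ against $\varphi(A_{kj})$, pull the bracket outside via the vanishing cross-commutator, feed the inner product into Lemmas~\ref{product-of-two-in-commutator}, \ref{commutator-belongs-diagonal} and \ref{preimage-in-diagonal}, and conclude from the trivial intersection of a root space with $Diag(A)$ (the paper lands in $A_{ik}$, you in $A_{kj}$). The only cosmetic difference is that you decompose the right-hand factor elementwise through $A_{ij}=A_{ik}A_{kj}$, which lets you invoke Lemma~\ref{preimage-in-diagonal} verbatim for same-first-index products $\varphi(A_{ij})\varphi(A_{ik})$, whereas the paper brackets from the left and implicitly uses the transposed, same-second-index variant of that lemma.
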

\begin{proof}
	By Lemma \ref{commutator-belongs-diagonal}, $\varphi(A_{ij})\varphi(A_{ij})\subseteq \varphi(Diag(A)\cap [A,A])$. Let $k\neq i,j$. Then:
	$$
	\begin{array}{l}
		\varphi(A_{ij})\varphi(A_{ij})=[\varphi(A_{ik}),\varphi(A_{kj})]\varphi(A_{ij})=[\varphi(A_{ik}),\varphi(A_{kj}) \varphi(A_{ij})]\subseteq\\
		\qquad \qquad \qquad \qquad \subseteq[\varphi(A_{ik}),\varphi(Diag(A)\cap [A,A])]\subseteq \varphi(A_{ik}),
	\end{array}
	$$
	where the first inclusion follows from Lemma \ref{preimage-in-diagonal}. Finally, 
	notice that $Diag(A)\cap A_{ik}=(0)$, which completes the proof of the lemma.
\end{proof}
\begin{lem}\label{three-different-zero}
	For arbitrary distinct indices $i,j,k$, we have $\varphi(A_{ij})\varphi(A_{ik})=\varphi(A_{ji})\varphi(A_{ki})=(0)$. 
\end{lem}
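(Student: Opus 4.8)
The plan is to establish $\varphi(A_{ij})\varphi(A_{ik})=(0)$; the companion identity $\varphi(A_{ji})\varphi(A_{ki})=(0)$ will follow by the transpose symmetry already used in Remark~\ref{symmetry}. Write $P=\varphi(A_{ij})\varphi(A_{ik})$. By Lemma~\ref{preimage-in-diagonal} we already have $P\subseteq\varphi(Diag(A)\cap[A,A])$, so everything reduces to showing that this diagonal element vanishes. The guiding idea is the one behind Lemma~\ref{varphi-square-zero}: produce a second description of $P$ as a commutator of a single off-diagonal block with a diagonal element, placing $P$ inside some $\varphi(A_{pq})$ with $p\neq q$; since $A_{pq}\cap Diag(A)=(0)$, the two containments force $P=(0)$.

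I would first carry this out whenever a fourth index is available (the situation of Theorem~\ref{main2}). Pick $l\notin\{i,j,k\}$. Fullness gives $A_{ij}=[A_{il},A_{lj}]$, hence $\varphi(A_{ij})=[\varphi(A_{il}),\varphi(A_{lj})]$, and a direct computation with \eqref{derivation-rel} gives
\[
[\varphi(A_{lj}),\varphi(A_{il})\varphi(A_{ik})]=[\varphi(A_{lj}),\varphi(A_{il})]\varphi(A_{ik})+\varphi(A_{il})[\varphi(A_{lj}),\varphi(A_{ik})].
\]
Here $[A_{lj},A_{il}]=-A_{ij}$, while $[A_{lj},A_{ik}]=(0)$ because $l\neq k$ and $i\neq j$; the second summand therefore dies and we obtain
\[
P=\bigl[\varphi(A_{il})\varphi(A_{ik}),\varphi(A_{lj})\bigr].
\]
The inner product $\varphi(A_{il})\varphi(A_{ik})$ is again a product of two blocks sharing their first index (with $l\neq k$), so Lemma~\ref{preimage-in-diagonal} yields $\varphi(A_{il})\varphi(A_{ik})\subseteq\varphi(Diag(A)\cap[A,A])$. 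Bracketing a diagonal element against $\varphi(A_{lj})$ lands in $\varphi(A_{lj})$, so $P\subseteq\varphi(A_{lj})$, and together with $P\subseteq\varphi(Diag(A))$ and $A_{lj}\cap Diag(A)=(0)$ this gives $P=(0)$. The transpose statement is handled by the mirror rotation $A_{ji}=[A_{jl},A_{li}]$, whose inner factor $\varphi(A_{li})\varphi(A_{ki})$ shares the second index and is diagonal by the symmetric form of Lemma~\ref{preimage-in-diagonal}.

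The step I expect to be the real obstacle is the three-idempotent case of Theorem~\ref{main}, where $\{i,j,k\}=\{1,2,3\}$, no fourth index exists, and the rotation above is unavailable. What remains accessible is softer: since $[A_{ij},A_{ik}]=(0)$ the blocks $\varphi(A_{ij}),\varphi(A_{ik})$ commute elementwise, and expanding through \eqref{derivation-rel} together with Lemma~\ref{varphi-square-zero} shows that each of $\varphi(A_{ij}),\varphi(A_{ik}),\varphi(A_{jk}),\varphi(A_{kj})$ commutes with $P$. On $Y=\varphi^{-1}(P)\subseteq Diag(A)$ this says $Y_{pp}A_{pq}=A_{pq}Y_{qq}$ for the corresponding Peirce components. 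These intertwining relations are not enough by themselves---a ``scalar'' diagonal element satisfies all of them---so the crux is to inject the weight information: acting by $\mathrm{ad}\,\varphi(h_{ij})$ and $\mathrm{ad}\,\varphi(h_{ik})$ via \eqref{derivation-rel} registers that $P$ carries weight $2\omega_i-\omega_j-\omega_k$, which is incompatible with the weight $0$ of $Diag(A)$. Converting this incompatibility into $P=(0)$ \emph{without any hypothesis on torsion} is the delicate point; I would try to break the symmetry of the intertwining relations by combining the scaling with the one-sided fullness principle (the refinement of Lemma~\ref{zero-commutator-implies-zero} that $A_{jk}Y_{kk}=(0)$ already forces $Y_{kk}=(0)$), so that killing a single diagonal component $Y_{pp}$ cascades through the relations to $Y=(0)$.
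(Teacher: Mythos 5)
Your four-idempotent argument coincides with the paper's proof of this case: with $t=l\notin\{i,j,k\}$ the paper writes $\varphi(A_{ij})\varphi(A_{ik})=[\varphi(A_{it})\varphi(A_{ik}),\varphi(A_{tj})]$, places the inner product in $\varphi(Diag(A)\cap[A,A])$ by Lemma~\ref{preimage-in-diagonal}, and concludes from $Diag(A)\cap A_{tj}=(0)$; your transpose reduction for $\varphi(A_{ji})\varphi(A_{ki})$ is also how the paper disposes of that half. The genuine gap is the three-idempotent case of Theorem~\ref{main}, which you explicitly leave as a plan rather than a proof --- and the plan as described would not go through. The weight mechanism you propose is unavailable: the clean relation $[h_{pq},a]=a$ for $a\in A_{ij}$ holds only when $q\notin\{i,j\}$, since $h_{ij}\in e_i+A_{jj}$ gives $[h_{ij},a_{ij}]=a_{ij}-a_{ij}d_{jj}$ with an uncontrolled $A_{ij}$-term. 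Hence with only three indices no single $h$ acts as a scalar on \emph{both} factors of the mixed product $\varphi(A_{ij})\varphi(A_{ik})$, which is exactly why the paper derives the eigenvalue relation \eqref{condition 2} only for products $\varphi(A_{ij})\varphi(A_{ij})$ of the same block and never for mixed ones. Your fallback --- cascading a single killed diagonal component through the intertwining relations --- is left unsubstantiated, and as you yourself observe, the relations alone admit scalar-like diagonal solutions.

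The paper closes this case by a maneuver your proposal does not contain. Taking $P=\varphi(A_{12})\varphi(A_{13})$ and $Y=\varphi^{-1}(P)$, it first shows $\varphi(A_{32})\varphi(A_{13})\subseteq[B,B]$ by writing $\varphi(A_{32})=[\varphi(A_{31}),\varphi(A_{12})]$ and moving the bracket with \eqref{derivation-rel}, using $[A_{12},A_{13}]=(0)$. Then
$$P=[\varphi(A_{13}),\varphi(A_{32})\varphi(A_{13})]\subseteq\varphi([A_{13},[A,A]]),$$
so $Y\subseteq[A_{13},A]$; intersecting with $Y\subseteq Diag(A)$ from Lemma~\ref{preimage-in-diagonal} forces the Peirce-support constraint $Y\subseteq A_{11}+A_{33}$. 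Finally the relations $[Y,A_{12}]=(0)$ and $[Y,A_{32}]=(0)$, combined with Lemma~\ref{zero-commutator-implies-zero} (fullness of $e_2$), kill $Y_{11}$ and then $Y_{33}$. In short, the missing idea is the re-expression of $P$ as a commutator of $\varphi(A_{13})$ against an element of $[B,B]$, which replaces the unavailable weight argument by a support restriction to two diagonal blocks; without it, the hardest case of the lemma --- the only one Theorem~\ref{main} actually needs --- remains unproven.
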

\begin{proof}
	Let us start with the case of Theorem \ref{main2}, where there are four idempotents. Let $1\leq t\leq 4$, $t\notin\{i,j,k\}$. Then $A_{ij}=[A_{it}, A_{tj}]$, and:
	$$
	\varphi(A_{ij})\varphi(A_{ik})=[\varphi(A_{it}),\varphi(A_{tj})] \varphi(A_{ik})=[\varphi(A_{it})\varphi(A_{ik}), \varphi(A_{tj})].
	$$
	Since $\varphi$ is a Lie homomorphism and $[A_{ik},A_{tj}]=(0)$, the left-hand side 
	lies in $\varphi(Diag(A)\cap[A,A])$, and the right-hand side lies in: 
	$$[\varphi(Diag(A)\cap[A,A]),\varphi(A_{tj})]\subseteq \varphi(A_{tj}).$$ Since  $Diag(A)\cap A_{tj}=(0)$, we conclude that 
	$\varphi(A_{ij})\varphi(A_{ik})=(0)$. 
	
	Next, consider the case of Theorem \ref{main}. Without loss of generality, we will focus on the product $\varphi(A_{12})\varphi(A_{13})$.
	Let $$Y=\varphi^{-1}(\varphi(A_{12})\varphi(A_{13})).$$ By \eqref{condition 3}, we have $[A_{12}, Y]=[A_{13}, Y]=(0)$. Following the proof of Lemma  \ref{preimage-in-diagonal},
	$[A_{23}, Y]=[A_{32},Y]=(0)$, and
	\begin{equation}\label{A1213}
		\varphi(A_{12})\varphi(A_{13})=[\varphi(A_{13}),\varphi(A_{32})] \varphi(A_{13})=[\varphi(A_{13}),\varphi(A_{32})\varphi(A_{13})].
	\end{equation}
	Let us show that $\varphi(A_{32})\varphi(A_{13})\subseteq [B,B]$. Indeed, $\varphi(A_{32})=[\varphi(A_{31}),\varphi(A_{12})]$.
	Hence, $$\varphi(A_{32})\varphi(A_{13})=[\varphi(A_{31}),\varphi(A_{12})]\varphi(A_{13})=[\varphi(A_{31})\varphi(A_{13}),\varphi(A_{12})].$$
	Then \eqref{A1213} implies that $Y\subseteq [A_{13},A]$. By Lemma \ref{preimage-in-diagonal}, $Y\subseteq Diag(A)$. Thus, we conclude that
	$Y\subseteq A_{11}+A_{33}$. Applying Lemma \ref{zero-commutator-implies-zero} to the equality $[Y,A_{12}]=(0)$, we get $Y\subseteq A_{33}$.  The equality $[Y,A_{32}]=(0)$  implies $Y=(0)$. 
	
	Similarly, $\varphi(A_{ji})\varphi(A_{ki})=(0)$. This completes the proof of the lemma.
\end{proof}
The following lemma makes sense only in the context of Theorem \ref{main2}.
\begin{lem} Let $1\leq i,j,k,t \leq 4$ be distinct. Then $\varphi(A_{ij})\varphi(A_{kt})=(0)$.
\end{lem}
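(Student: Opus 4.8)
The plan is to reduce to a single configuration and then run a Peirce analysis on the preimage, in the spirit of Lemmas~\ref{preimage-in-diagonal} and \ref{three-different-zero}. Since $i,j,k,t$ are distinct, $\{i,j,k,t\}=\{1,2,3,4\}$, and as the hypotheses of Theorem~\ref{main2} are symmetric in the four full idempotents, it suffices (after relabeling) to treat $\varphi(A_{12})\varphi(A_{34})$. Because $(\omega_1-\omega_2)+(\omega_3-\omega_4)=\omega_1+\omega_3-\omega_2-\omega_4\notin\Delta\cup\{0\}$, Lemma~\ref{product-of-two-in-commutator} gives $\varphi(A_{12})\varphi(A_{34})\subseteq[B,B]$, so the set $W=\varphi^{-1}\bigl(\varphi(A_{12})\varphi(A_{34})\bigr)\subseteq[A,A]$ is well defined, and the goal is to show $W=0$.

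First I would collect commutation relations. Applying \eqref{derivation-rel} to $[\varphi(A_\beta),\varphi(A_{12})\varphi(A_{34})]=[\varphi(A_\beta),\varphi(A_{12})]\varphi(A_{34})+\varphi(A_{12})[\varphi(A_\beta),\varphi(A_{34})]$ and using that $\varphi$ is a Lie homomorphism, each summand turns into a product of the form $\varphi(A_{pq})\varphi(A_{rs})$. Whenever both such products vanish---either trivially, when $[A_\beta,A_{12}]=[A_\beta,A_{34}]=(0)$, or by Lemmas~\ref{varphi-square-zero} and \ref{three-different-zero}---one concludes $[W,A_\beta]=(0)$. Carrying this out produces eight roots for which this works, namely those with $A_\beta\in\{A_{12},A_{13},A_{14},A_{24},A_{31},A_{32},A_{34},A_{42}\}$.

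Next I would decompose $W=\sum_{p,q}W_{pq}$ into Peirce components. For fixed $\beta$ and off-diagonal $(p,q)$, the summand $[W_{pq},A_\beta]$ is the unique contribution to $[W,A_\beta]$ of homogeneous degree $(\omega_p-\omega_q)+\beta$, so $[W,A_\beta]=(0)$ forces $[W_{pq},A_\beta]=(0)$ componentwise. For every off-diagonal $(p,q)$ other than the corner $(4,2)$, one of the eight roots above has the form required by Lemma~\ref{zero-commutator-implies-zero} or Remark~\ref{symmetry}, and those lemmas give $W_{pq}=(0)$ at once. For $W_{42}$, the isolated degree-zero part of $[W,A_{24}]=(0)$ yields $[W_{42},A_{24}]=(0)$, hence $W_{42}A_{24}=(0)$; invoking $A=Ae_4A$ upgrades this to $W_{42}A=(0)$, and then $W_{42}=W_{42}h_{24}\in W_{42}A=(0)$, where $h_{24}\in e_2+e_4Ae_4\subseteq A$ acts as a right identity on $A_{42}$. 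Thus every off-diagonal component vanishes and $W\subseteq Diag(A)$.

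It remains to eliminate the diagonal part, and this is the step I expect to be the main obstacle: the diagonal directions are exactly the ones invisible to the relations above, which only give intertwining identities $d_{pp}a_{pq}=a_{pq}d_{qq}$ rather than annihilation. Here I would argue by weights, as in \eqref{double-proof}: for $x=\varphi(a_{12})\varphi(b_{34})$ and the elements $h_{ij}$ introduced before Lemma~\ref{zero-commutator-implies-zero}, one computes $[\varphi(h_{ij}),x]$, whose ``clean'' eigenvalue contributions are $\pm x$ while the correction terms again land in $\varphi(A_{12})\varphi(A_{34})=\varphi(W)$; playing these weight relations against the fullness identities $A=Ae_mA$, exactly as in the closing arguments of Lemmas~\ref{commutator-belongs-diagonal} and \ref{preimage-in-diagonal}, should force each diagonal block $d_{pp}$ into an annihilator and then to $0$. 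The genuine difficulty is that a single $h_{ij}$ cannot simultaneously diagonalize both tensor factors---there are only four indices---so the correction terms must be tracked carefully and the weight operators balanced against the fullness conditions to close the argument.
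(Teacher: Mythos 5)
Your off-diagonal analysis is correct: the reduction to $\varphi(A_{12})\varphi(A_{34})$, the eight relations $[W,A_\beta]=(0)$ (each does check out against Lemma~\ref{three-different-zero} and the vanishing brackets), the componentwise separation via the $\Gamma$-grading, and the special treatment of $W_{42}$ using $A=Ae_4A$ all work. But the proof is not complete, and the gap you flag at the end is genuine and, with the tools you have assembled, unclosable. For diagonal $w=\sum_p w_{pp}\in W$ the relations $[W,A_\beta]=(0)$ only give intertwining identities $w_{pp}a_{pq}=a_{pq}w_{qq}$, and these are satisfied by central elements; since the paper makes no torsion assumptions, central diagonal elements can genuinely lie in $[A,A]$ (e.g.\ the identity matrix of $M_4(\mathbb{F})$ with $\operatorname{char}\mathbb{F}=2$ is in $\mathfrak{sl}_4$ and commutes with every root space), so no amount of further bracketing of $W$ against root spaces can force $W\cap Diag(A)=(0)$. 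Your fallback---a weight argument in the style of \eqref{double-proof}---degenerates for exactly the reason you notice: the pairs $\{1,2\}$ and $\{3,4\}$ exhaust the four indices, so every $h_{ij}$ acts with non-scalar correction terms on at least one factor (e.g.\ $[h_{13},b_{34}]=(h_{13}-e_1)b_{34}$), and what one actually obtains are stability relations $[h,W]\subseteq W$ rather than an eigenvalue equation $[h,w]=2w$ of the kind that closed Lemmas~\ref{commutator-belongs-diagonal} and~\ref{preimage-in-diagonal}. ``Should force each diagonal block into an annihilator'' is a hope, not an argument, and the obstruction above shows it cannot be realized by these means.

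The paper's proof avoids the diagonal entirely, and this membership step is what your proposal is missing. Writing $A_{12}=[A_{13},A_{32}]$ and using \eqref{derivation-rel} together with $[A_{32},A_{34}]=(0)$, one gets $\varphi(A_{12})\varphi(A_{34})=[\varphi(A_{13})\varphi(A_{34}),\varphi(A_{32})]$, whence $Y=\varphi^{-1}\bigl(\varphi(A_{12})\varphi(A_{34})\bigr)\subseteq[A,A_{32}]$; symmetrically, $A_{12}=[A_{14},A_{42}]$ gives $\varphi(A_{12})\varphi(A_{34})=[\varphi(A_{14}),\varphi(A_{42})\varphi(A_{34})]$ and $Y\subseteq[A_{14},A]$. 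The intersection $[A,A_{32}]\cap[A_{14},A]$ consists of components in column $2$ or row $3$ that are simultaneously in row $1$ or column $4$, i.e.\ it is $A_{12}+A_{34}$---no diagonal part ever appears. Two of your eight relations, $[Y,A_{24}]=(0)$ and $[Y,A_{13}]=(0)$, then kill $Y_{12}$ and $Y_{34}$ via Lemma~\ref{zero-commutator-implies-zero} and Remark~\ref{symmetry}, exactly as in your off-diagonal step. So your work reproduces (in more generality than needed) the paper's final two lines, while the decisive move---trapping $Y$ inside $A_{12}+A_{34}$ \emph{before} any Peirce analysis, so that the diagonal problem never arises---is absent.
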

\begin{proof} Without loss of generality, consider the product $\varphi(A_{12})\varphi(A_{34})$. Let $$Y=\varphi^{-1}(\varphi(A_{12})\varphi(A_{34})).$$ We have $A_{12}=[A_{13}, A_{32}]$. Hence, 
	$$
	\varphi(A_{12})\varphi(A_{34})=[\varphi(A_{13}),\varphi(A_{32})] \varphi(A_{34})=[\varphi(A_{13})\varphi(A_{34}),\varphi(A_{32})],
	$$
	which implies  $Y\subseteq [A, A_{32}]$. Similarly, since $A_{12}=[A_{14}, A_{42}]$, we have  $$\varphi(A_{12})\varphi(A_{34})=[\varphi(A_{14}),\varphi(A_{42})]\varphi(A_{34})=[\varphi(A_{14}),\varphi(A_{42})\varphi(A_{34})]$$  leading to $Y\subseteq [A_{14},A]$. Thus, $Y \subseteq [A,A_{32}]\cap[A_{14}, A]=A_{12}+A_{34}$.
	By Lemma \ref{three-different-zero}, $$[\varphi(A_{12})\varphi(A_{34}),\varphi(A_{24})]\subseteq \varphi(A_{14})\varphi(A_{34})=(0),$$ so $[Y, A_{24}]=(0)$. Lemma \ref{zero-commutator-implies-zero} then implies $Y\subseteq A_{34}$. Similarly, by Lemma \ref{three-different-zero},
	$$[\varphi(A_{13}),\varphi(A_{12})\varphi(A_{34})]\subseteq \varphi(A_{12})\varphi(A_{14})=(0),$$ so $[Y,A_{13}]=(0)$. Again, by Lemma \ref{zero-commutator-implies-zero}, it follows that $Y=(0)$. This completes the proof of the lemma and  Proposition \ref{speciality}.
\end{proof}

\section{Universal Specializations and Proofs of Theorem \ref{main} and  Theorem \ref{main2}}\label{section4}
\subsection*{4.1} Let $A$ be an associative ring with pairwise orthogonal idempotents $e_1$, $e_2$, $e_3$ (resp. $e_1$, $e_2$, $e_3$, $e_4$)
satisfying the assumptions of Theorem \ref{main} (resp. Theorem \ref{main2}). Then the ring $[A,A]$ is $\Delta$-graded,
where $\Delta$ is the root system $A_2$ (resp. $A_3$); see Lemma \ref{example-graded}. Recall that $A^{op}$ is the opposite ring of  $A$, $A^{op}=(A,a\cdot b=ba)$.
Consider the direct sum $A\oplus A^{op}$. Then the mapping $\theta: a\to a\oplus (-a^{op})$ defines a specialization of the $\Delta$-graded ring
$L=[A,A]$. 
\begin{lem}
	The associative ring $A\oplus A^{op}$ is generated by the subset $\cup_{\alpha\in\Delta}\,\theta(L_{\alpha})$.
\end{lem}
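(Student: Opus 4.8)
The plan is to show that the subring $R\subseteq A\oplus A^{op}$ generated by $\bigcup_{\alpha\in\Delta}\theta(L_{\alpha})=\bigcup_{i\neq j}\theta(A_{ij})$ contains both $A\oplus 0$ and $0\oplus A^{op}$; since the reverse inclusion $R\subseteq A\oplus A^{op}$ is automatic, this yields $R=A\oplus A^{op}$. The computational engine is the product rule in $A\oplus A^{op}$, namely $\theta(a)\theta(b)=ab\oplus(ba)^{op}$ for $a,b\in A$, which I will combine with the fullness hypothesis $Ae_lA=A$ (valid for every idempotent $e_l$, $1\leq l\leq n$, under the assumptions of Theorem~\ref{main} as well as Theorem~\ref{main2}) in the multiplicative form $A_{il}A_{lj}=e_i(Ae_lA)e_j=A_{ij}$.

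First I would manufacture the off-diagonal blocks of $A\oplus 0$. Fix $i\neq j$ and, using $n\geq 3$, choose a mediating index $l\notin\{i,j\}$. For $a_{il}\in A_{il}$ and $b_{lj}\in A_{lj}$ the product is $\theta(a_{il})\theta(b_{lj})=a_{il}b_{lj}\oplus(b_{lj}a_{il})^{op}$; here the $A^{op}$-component vanishes because $b_{lj}a_{il}\in e_lAe_j\,e_iAe_l=(0)$ (the outer indices $i,j$ are distinct), while the $A$-component ranges over $A_{il}A_{lj}=A_{ij}$ by fullness of $e_l$. Hence $A_{ij}\oplus 0\subseteq R$ for all $i\neq j$.

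Next I would fill in the diagonal and the opposite summand. Multiplying two of the blocks just obtained gives $(A_{ij}\oplus 0)(A_{ji}\oplus 0)=A_{ij}A_{ji}\oplus 0=A_{ii}\oplus 0$, again by fullness, so every Peirce block $A_{kl}\oplus 0\subseteq R$, and summing over $1\leq k,l\leq n$ yields $A\oplus 0\subseteq R$. For the opposite summand I use that each generator $\theta(a_{ij})=a_{ij}\oplus(-a_{ij}^{op})$ already lies in $R$; subtracting the element $a_{ij}\oplus 0\in R$ gives $0\oplus A_{ij}^{op}\subseteq R$ for every $i\neq j$. Multiplying, $(0\oplus A_{ji}^{op})(0\oplus A_{ij}^{op})=0\oplus(A_{ij}A_{ji})^{op}=0\oplus A_{ii}^{op}$, so $0\oplus A^{op}\subseteq R$ as well. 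Therefore $R\supseteq(A\oplus 0)+(0\oplus A^{op})=A\oplus A^{op}$, giving the claim.

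I do not expect a genuine obstacle here: the argument is a direct Peirce-component computation. The only points requiring care are the vanishing of the $A^{op}$-component in the key product (which is what forces the choice of \emph{distinct} outer indices $i\neq j$) and the systematic use of fullness to realize an arbitrary block $A_{ij}$ as a product $A_{il}A_{lj}$. Both are available precisely because $n\geq 3$ guarantees the existence of a mediating index $l$ and every $e_l$ satisfies $Ae_lA=A$.
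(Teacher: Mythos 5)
Your proof is correct and takes essentially the same route as the paper's: the central computation $\theta(a_{il})\theta(b_{lj})=a_{il}b_{lj}\oplus 0$ for distinct outer indices, combined with fullness of the mediating idempotent to get $A_{il}A_{lj}=A_{ij}$, so that every block $A_{ij}\oplus 0$ lies in the generated subring. The only difference is expository: you explicitly derive the diagonal blocks and the $0\oplus A^{op}$ summand (via subtraction of $a_{ij}\oplus 0$ from the generator), steps the paper compresses into the remark that $A$ is generated by $\bigcup_{i\neq j}A_{ij}$ and the word ``similarly.''
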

\begin{proof}
	For distinct indices $i,j,k$ and elements $a_{ij}\in A_{ij}$, $b_{jk}\in A_{jk}$, we have 
	$$
	(a_{ij}\oplus{ -a_{ij}^{op}})(b_{jk}\oplus{ -b_{jk}^{op}})=a_{ij} b_{jk}\oplus (b_{jk}a_{ij})^{op}=a_{ij} b_{jk}\oplus 0.
	$$
	Since $A_{ik}=A_{ij }A_{jk}$, it follows that $A_{ik}\oplus  (0)$ lies in the subring generated by $\cup_{\alpha\in\Delta} 
	\theta(L_{\alpha})$. 
	Since the ring $A$ is generated by $\cup_{i\neq j} A_{ij}$, it follows that
	$A\oplus (0)$  lies in this subring and similarly, so does $ (0) \oplus A^{op}$. This completes the
	proof of the lemma. 
\end{proof}
Let $\widehat{L}_{gr}\to L$ be the universal graded central extension of $L=[A,A]$,  and let $u:\widehat{L}_{gr}\to U$ be the universal specialization of the $\Delta$-graded Lie algebra $L$. The ring $U$ is
$\Gamma$-graded,  $U=U_0+\oplus_{\alpha\in \Delta} U_{\alpha}$, and is equipped with an involution $*: U\to U$.
For an arbitrary element $x\in u(L_{\alpha})$, where $\alpha\in\Delta$, we have $x^*=-x$. By the universality of the specialization 
$u$, there exists an epimorphism $\tau: U\to A\oplus A^{op}$ such that $\tau(u(a))=a\oplus -a^{op}$
for any element $a\in L_{\alpha}$, $\alpha\in \Delta$.  

In this section, we prove the following proposition.
\begin{prop}\label{annihilator_extension}
	The epimorphism $U\to A\oplus A^{op}$ is an annihilator extension.
\end{prop}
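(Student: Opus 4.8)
The plan is to show that the kernel $K = \operatorname{Ker}(\tau: U \to A \oplus A^{op})$ is contained in $\operatorname{Ann}(U)$; since $\tau$ is already an epimorphism and the universal specialization $U$ satisfies $U^2 = U$ (it is generated by $\cup_{\alpha} u(L_\alpha)$, and products of such generators already generate $U$), establishing $K \subseteq \operatorname{Ann}(U)$ is exactly what is needed for $\tau$ to be an annihilator extension. The natural strategy is to exploit the $\Gamma$-grading $U = U_0 + \bigoplus_{\alpha \in \Delta} U_\alpha$ together with the involution $*$, and to compare $U$ with the target $A \oplus A^{op}$ degree by degree.

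First I would observe that $\tau$ is a $\Gamma$-graded homomorphism, so $K = \bigoplus_{\alpha} K_\alpha$ is a graded ideal, and it suffices to kill each homogeneous component against all of $U$. The key point is that on each root space $\alpha \in \Delta$, the map $\tau$ restricted to $U_\alpha$ should be injective: the composite $u: \widehat{L}_{gr} \to U$ sends $L_\alpha$ isomorphically onto its image (since $u$ is a graded central extension and $\alpha \neq 0$ forces the kernel into $U_0$), and $\tau \circ u$ recovers $a \mapsto a \oplus -a^{op}$ on $L_\alpha = A_\alpha$, which is injective. Because $U_\alpha$ is generated as an abelian group by $u(L_\alpha)$ for $\alpha \in \Delta$, this gives $K_\alpha = (0)$ for every $\alpha \in \Delta$. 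Hence $K = K_0$ is concentrated in degree zero.

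It then remains to show that $K_0 \subseteq \operatorname{Ann}(U)$, i.e. that any degree-zero element killed by $\tau$ annihilates every $U_\beta$, $\beta \in \Delta$, from both sides (which, since $U = U_0 + \sum U_\beta$ and $U_0 = \sum_\alpha [U_{-\alpha}, U_\alpha]$-type products, suffices to annihilate all of $U$). Take $z \in K_0$ and a generator $x_\beta = u(b_\beta) \in U_\beta$. The product $z \, x_\beta$ lies in $U_\beta$, and applying $\tau$ gives $\tau(z)\tau(x_\beta) = 0$ since $\tau(z) = 0$; by the injectivity of $\tau$ on $U_\beta$ established above, $z\, x_\beta = 0$, and symmetrically $x_\beta\, z = 0$. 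Using $U^2 = U$ and the fact that $U_0$ is spanned by products of root-space generators, one propagates this to $z\, U = U\, z = (0)$.

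The main obstacle I expect is the injectivity of $\tau$ on the root components $U_\alpha$: one must verify carefully that $U_\alpha$ receives nothing beyond the image $u(L_\alpha)$, i.e. that the defining ideal $J$ of the universal specialization does not force extra relations \emph{within} a single root degree that are absent in $A \oplus A^{op}$. Here the specialization property (Proposition~\ref{speciality}) is essential: it guarantees that the Lie map $\varphi$, and hence the structural relations imposed in $U$, are precisely those compatible with the associative products $a_{ij} b_{jk} = a_{ik}$ in $A$ together with the reversed products in $A^{op}$, so that degree-wise $U_\alpha$ is exactly $A_\alpha \oplus (A^{op})_\alpha$ collapsed appropriately onto the single summand detected by $\tau$. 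Once the grading argument localizes $K$ to degree zero and the root-space injectivity is in hand, the annihilator conclusion follows by the propagation step above, completing the proof that $U \to A \oplus A^{op}$ is an annihilator extension.
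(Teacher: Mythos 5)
Your proposal has a genuine gap at its central step. You claim that $\tau$ is injective on each root component $U_{\alpha}$ because ``$U_{\alpha}$ is generated as an abelian group by $u(L_{\alpha})$,'' but this is false. By Lemma~\ref{product-of-two}, $U_{\alpha}=\sum_{\mu+\nu=\alpha}u(L_{\mu})u(L_{\nu})$, and these associative products do not lie in $u(L_{\alpha})$: only the commutator $[u(L_{\mu}),u(L_{\nu})]=u([L_{\mu},L_{\nu}])$ does, while the symmetric part escapes the image of $u$. Concretely, $\tau\bigl(u(a_{12})u(b_{23})\bigr)=\bigl(a_{12}\oplus(-a_{12}^{op})\bigr)\bigl(b_{23}\oplus(-b_{23}^{op})\bigr)=a_{12}b_{23}\oplus 0$, which is not of the form $c\oplus(-c^{op})$; so $U_{\alpha}$ strictly contains $u(L_{\alpha})$, and the injectivity of $\theta$ on $L_{\alpha}$ only yields $I\cap u(L_{\alpha})=(0)$, not $I_{\alpha}=(0)$. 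Proving $I_{\alpha}=(0)$ is precisely the hard content of the proposition. Your third paragraph names this as ``the main obstacle'' but then resolves it by appeal to ``the specialization property,'' which is not an argument: Proposition~\ref{speciality} concerns the map $\varphi$ and says nothing about which relations the defining ideal $J$ of $U$ imposes within a fixed root degree.

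The paper closes this gap with machinery your sketch never touches. First, the involution $*$ on $U$ together with $I\cap u(L_{\alpha})=(0)$ shows every $x\in I_{\alpha}$ is symmetric (Lemma~\ref{symmetric}: $x-x^*$ is a sum of $u([x_{\mu_1},x_{\mu_2}])\in u(L_{\alpha})$). Then, in the $A_2$ case, the \emph{fullness} of the idempotents is used to build elements $E_1=\sum_{\mu}u(a_{12}^{(\mu)})u(b_{21}^{(\mu)})$ and $E_3$ satisfying $E_1x_{13}=x_{13}E_3=(1-E_3^*)x_{13}$ and $E_1E_3^*=0$; combining this with symmetry of kernel elements forces $I_{\alpha}=(0)$ (Lemmas~\ref{pierce-component}--\ref{E1-zero}). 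In the $A_3$ case the argument runs instead through the unital subring $U'\cong A'\oplus(A')^{op}$ and a Peirce decomposition relative to its identity $e$, again using $e_i\in Ue_jU$. Note that your proposed proof nowhere uses the hypothesis $Ae_iA=A$ with $n\geq 3$, yet Section~\ref{section5} exhibits a counterexample with two full idempotents, so no argument ignoring fullness can be correct. Your final propagation step --- once $I=I_0$ is known, $UI=IU=(0)$ because $U$ is generated by $\bigcup_{\alpha\in\Delta}U_{\alpha}$ and $I_{\alpha}=(0)$ --- is sound and matches the paper, but it rests entirely on the unproven degree-wise statement.
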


Since the mapping $\theta: L_{\alpha}\to (A\oplus A^{op})_{\alpha}$
is injective, it follows that the mapping $u:L_{\alpha}\to U_{\alpha}$ for $\alpha\in\Delta$, is also injective. 
Let $I=\operatorname{Ker}\tau$. It is clear that $I^*=I$ and $I\cap u(L_{\alpha})=(0)$
for $\alpha\in\Delta$.
\begin{lem}\label{product-of-two}
	For an arbitrary root $\alpha\in \Delta$, we have $$U_{\alpha}=\sum_{\mu,\nu\in\Delta,\, \mu+\nu=\alpha} u(L_{\mu})u(L_{\nu}).$$
\end{lem}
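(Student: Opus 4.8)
The inclusion $\supseteq$ is immediate, since $u(L_\mu)u(L_\nu)\subseteq U_{\mu+\nu}=U_\alpha$ whenever $\mu+\nu=\alpha$. For the reverse inclusion, write $P_\alpha=\sum_{\mu+\nu=\alpha}u(L_\mu)u(L_\nu)$. Because $U$ is generated by $\bigcup_{\beta\in\Delta}u(L_\beta)$, the group $U_\alpha$ is spanned by monomials $m=u(x_1)\cdots u(x_r)$ with $x_s\in L_{\beta_s}$, $\beta_s\in\Delta$ and $\beta_1+\cdots+\beta_r=\alpha$, and I would prove $m\in P_\alpha$ by induction on $r$. The first reduction I would make is to discard vanishing monomials: by the very construction of the universal specialization, every monomial $X_{\gamma_1}\cdots X_{\gamma_t}$ with $\gamma_1+\cdots+\gamma_t\notin\Delta\cup\{0\}$ lies in the defining ideal $J$, so any $m$ containing a consecutive subword whose partial degree sum leaves $\Delta\cup\{0\}$ is zero. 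Hence I may assume that every prefix sum $\sigma_t=\beta_1+\cdots+\beta_t$, and more generally every consecutive partial sum, lies in $\Delta\cup\{0\}$.

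For the base cases, $r=2$ is a defining summand of $P_\alpha$, while for $r=1$ I would use that $L_\alpha=[L_\mu,L_\nu]$ for a suitable factorization $\alpha=\mu+\nu$: writing $\alpha=\omega_i-\omega_j$ and choosing a third index $k$, one has $[A_{ik},A_{kj}]=A_{ik}A_{kj}=A_{ij}$ because the opposite product $A_{kj}A_{ik}$ vanishes by orthogonality of the idempotents and $A_{ik}A_{kj}=e_iAe_kAe_j=e_iAe_j$ by fullness of $e_k$. Lifting this identity to $\widehat{L}_{gr}$, where $u$ is injective in each root degree, and applying the specialization gives $u(L_\alpha)=[u(L_\mu),u(L_\nu)]\subseteq u(L_\mu)u(L_\nu)+u(L_\nu)u(L_\mu)\subseteq P_\alpha$.

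For the inductive step $r\geq 3$, the main tool is the derivation identity $u(x_s)u(x_{s+1})=u([x_s,x_{s+1}])+u(x_{s+1})u(x_s)$. When an adjacent pair chains, i.e. $\beta_s+\beta_{s+1}\in\Delta$, the bracket $[x_s,x_{s+1}]$ lies in a root space, so the first summand is a length-$(r-1)$ monomial in root degrees and belongs to $P_\alpha$ by the induction hypothesis; the transposed summand $\cdots u(x_{s+1})u(x_s)\cdots$ then has a new consecutive subword, and a short finite computation in the rank two or rank three system $A_{n-1}$ shows that in the relevant configurations this subword acquires a degree outside $\Delta\cup\{0\}$ (typically $2\alpha$, or a genuinely three-index combination), so that the transposed term vanishes.

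The step I expect to be the genuine obstacle is the family of monomials with vanishing interior partial sums, the extreme case being the alternating products $u(L_\alpha)u(L_{-\alpha})u(L_\alpha)\cdots$, which cannot be shortened by transpositions alone and collapse only to a product of a root component and a degree-zero component. To close these cases I would prove the auxiliary inclusions $u(L_0)\,u(L_\alpha)\subseteq P_\alpha$ and $u(L_\alpha)\,u(L_0)\subseteq P_\alpha$, expanding $u(L_0)=\sum_{\gamma\in\Delta}[u(L_\gamma),u(L_{-\gamma})]$ and again using the forbidden-degree vanishing to eliminate the recurring terms; here the involution $*$ is helpful, since $u(L_\alpha)^*=-u(L_\alpha)$, $u(L_0)^*=-u(L_0)$ and $P_\alpha^*=P_\alpha$, so that the two inclusions are interchanged by $*$ and it suffices to establish one of them. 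Carrying out this auxiliary argument without circularity, ensuring that the recursive appearance of degree-zero factors terminates, is the crux of the proof.
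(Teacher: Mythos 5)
Your skeleton is the same as the paper's (reduce $U_\alpha$ to ``regular products'' $u(a_{\mu_1})\cdots u(a_{\mu_r})$, induct on length, transpose adjacent factors via $u(x)u(y)=u([x,y])+u(y)u(x)$, and kill any word containing a consecutive subword of degree outside $\Delta\cup\{0\}$), and your base cases $r=1,2$ are handled correctly. But there are two genuine gaps, and you explicitly concede the second one. First, in the chaining case you claim that after one transposition ``the transposed term vanishes'' by ``a short finite computation'' --- this is unproven, and it is not how the reduction actually closes. The paper does not claim single transpositions vanish; it only uses transposition to show that factors commute \emph{modulo regular products of length} $r-1$, so arbitrary permutations are available, and then finishes with two root-system facts: (1) if $\mu_i+\mu_j$, $\mu_i+\mu_k$, $\mu_j+\mu_k\in\Delta$ then $\mu_i+\mu_j+\mu_k=0$; (2) for $\alpha,\beta\in\Delta$, not both $\alpha+\beta$ and $\alpha-\beta$ lie in $\Delta$. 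In a non-shortenable word of length $r\geq 3$ every pairwise sum must lie in $\Delta$ (otherwise permute the pair together and the word dies or shortens), so by (1) every triple of roots sums to zero; $r=3$ then forces $\alpha=0$, a contradiction, while $r\geq 4$ forces $\mu_3=\mu_4$, so $\mu_3+\mu_4=2\mu_3\notin\Delta\cup\{0\}$ and the word vanishes. That combinatorial endgame is exactly what your deferred ``finite computation'' would have to supply.

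Second, and more seriously, the degree-zero pairs $u(x_\beta)u(y_{-\beta})$ --- which you correctly identify as the obstacle and then leave open (``the crux of the proof'') --- are dispatched by a specific trick you are missing, and it removes all the circularity and termination worries of your proposed auxiliary inclusions $u(L_0)u(L_\alpha)\subseteq P_\alpha$. For $\gamma\neq\beta$ one has $u(x_\beta)u(y_{-\beta})u(z_\gamma)=u(x_\beta)u([y_{-\beta},z_\gamma])+u(x_\beta)u(z_\gamma)u(y_{-\beta})$, and the last term is zero: by fact (2) above, one of the subword degrees $\beta+\gamma$, $\gamma-\beta$ leaves $\Delta\cup\{0\}$. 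For the alternating case $\gamma=\beta$, write $u(x_\beta)u(y_{-\beta})u(z_\beta)=[u(x_\beta),u(y_{-\beta})]u(z_\beta)$, choose $\beta_1,\beta_2\in\Delta$ with $\beta_1+\beta_2=\beta$ and $[L_{\beta_1},L_{\beta_2}]=L_\beta$ (so $\beta_1,\beta_2\neq\pm\beta$), and use the Jacobi identity to get $[u(L_\beta),u(L_{-\beta})]\subseteq[u(L_{\beta_1}),u(L_{-\beta_1})]+[u(L_{\beta_2}),u(L_{-\beta_2})]$; substituting back reduces to the already-settled case $\gamma\neq\beta$. Thus every zero-sum pair followed (or, symmetrically, preceded) by another factor collapses to regular products of length two in a single step --- no recursion through degree-zero components, and no need for the involution $*$, which the paper reserves for the later Lemma on $I_\alpha$. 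Without these two ingredients your induction does not close, so the proposal as written is incomplete precisely at the points where the paper's proof does its real work.
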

\begin{proof}
	We begin with two observations:
	\begin{enumerate}
		\item[(1)] if $\alpha$, $\beta$, $\gamma\in \Delta$ and $\alpha+\beta$, $\alpha+\gamma$, $\beta+\gamma\in\Delta$,
		then $\alpha=\omega_i-\omega_j$, $\beta=\omega_j-\omega_k$, $\gamma=\omega_k-\omega_i$ for some distinct indices $i,j,k$. Observe that $\alpha+\beta+\gamma=0$.
		\item[(2)] If $\alpha,\beta\in\Delta$, then either $\alpha+\beta\notin \Delta$ or $\alpha-\beta\notin \Delta$.
	\end{enumerate}
	An arbitrary element from $U_{\alpha}$ is a sum of products of the form $$u(a_{\mu_1})\cdots u(a_{\mu_r}), \quad \text{where} \quad  \mu_i\in\Delta, \quad 
	a_{\mu_i}\in L_{\mu_i}, \quad \text{and} \quad \sum \mu_i=\alpha.$$ We  refer to these products as regular products. Suppose that a product  $u(a_{\mu_1})\cdots u(a_{\mu_r})$ can not be represented as a sum of regular products of shorter length. We will show that $r\leq 2$. Let $\beta, \gamma\in\Delta$, $x_{\beta}\in L_{\beta}$, $y_{-\beta}\in L_{-\beta}$, and $z_{\gamma}\in L_{\gamma}$, assuming that $\gamma\neq \beta$. Then 
	$$
	u(x_{\beta}) u(y_{-\beta}) u(z_{\gamma})=u(x_{\beta}) u([y_{-\beta},z_{\gamma}])+u(x_{\beta})u(z_{\gamma}) u(y_{-\beta}).
	$$
	By  Observation $(2)$, we have $u(x_{\beta}) u(z_{\gamma}) u(y_{-\beta})=0$. If $\gamma=\beta$, then 
	\begin{equation}\label{tech1}
		u(x_{\beta}) u(y_{-\beta}) u(z_{\beta})=[u(x_{\beta}), u(y_{-\beta}) ]u(z_{\beta}).
	\end{equation} 
	There exist roots $\beta_1,\beta_2\in\Delta$ such that $\beta_1+\beta_2=\beta$ and $[L_{\beta_1}, L_{\beta_2}]=L_{\beta}$. Clearly, 
	$\beta_1,\beta_2\neq \pm \beta$. We have 
	$$
	[u(L_{\beta}), u(L_{-\beta})]\subseteq [u(L_{\beta_1}),u(L_{-\beta_1})]+[u(L_{\beta_2}),u(L_{-\beta_2})].
	$$
	Substituting this  into \eqref{tech1}, we obtain the case that has already been considered. Thus, we have shown that $u(x_{\beta})u(y_{-\beta})u(z_{\gamma})$ can be represented as a sum of regular products of length two. 
	Hence, we may assume that for any product $u(a_{\mu_1})\cdots u(a_{\mu_r})$, where $r\geq 3$, the condition $\mu_i+\mu_{i+1}\in \Delta$ holds 
	for $1\leq i\leq r-1$. Thus, we can commute $u(a_{\mu_i})$ and $u(a_{\mu_{i+1}})$ modulo a regular product 
	of length $r-1$.  In particular, for any distinct $1\leq i,j,k\leq r$ the roots $\mu_i,\mu_j,\mu_k$, satisfy the assumptions of Observation $(1)$. Then, for $3\leq p\leq r$, the condition $\mu_p=-\mu_1-\mu_2$  implies 
	$u(a_{\mu_1})\cdots u(a_{\mu_r})=0$. If $r=3$, then $\alpha=\mu_1+\mu_2+\mu_3=0,$ which leads to a contradiction.
	Thus, we conclude that $$U_{\alpha}= u(L_{\alpha})+\sum_{\mu,\nu \in\Delta,\, \mu+\nu=\alpha} u(L_{\mu}) u(L_{\nu}).$$ The first summand on the right-hand side is contained in the second summand. This completes the proof of the lemma.
\end{proof}
\begin{lem}\label{symmetric}
	An arbitrary element $x\in I_{\alpha}=I\cap U_{\alpha}$, where $\alpha\in\Delta$, is symmetric, i.e., $x^*=x$.
\end{lem}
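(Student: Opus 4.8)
The plan is to show that, on each graded component $U_\alpha$ with $\alpha\in\Delta$, the antisymmetrization $x\mapsto x-x^*$ always lands in the ``linear piece'' $u(L_\alpha)$, the one subgroup on which $I$ is already known to vanish; the symmetry of elements of $I_\alpha$ then drops out immediately.

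First I would record that the involution $*$ respects the $\Gamma$-grading of $U$. Since $*$ is an anti-automorphism sending each free generator to its negative, a monomial of degree $\alpha_1+\dots+\alpha_r$ is sent to $(-1)^r$ times the reversed monomial, which still has degree $\alpha_1+\dots+\alpha_r$. Hence $U_\alpha^*=U_\alpha$, and as $I^*=I$, the component $I_\alpha=I\cap U_\alpha$ is itself $*$-invariant. In particular $*$ acts on $I_\alpha$, and it suffices to show it acts as the identity there.

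The key computation uses Lemma~\ref{product-of-two}: every $x\in U_\alpha$ can be written as a finite sum $x=\sum_k u(a_k)u(b_k)$ with $a_k\in L_{\mu_k}$, $b_k\in L_{\nu_k}$, $\mu_k,\nu_k\in\Delta$ and $\mu_k+\nu_k=\alpha$. For each summand, since $u(a_k)^*=-u(a_k)$ and $u(b_k)^*=-u(b_k)$, the two sign changes cancel, giving $(u(a_k)u(b_k))^*=u(b_k)^*u(a_k)^*=u(b_k)u(a_k)$. Therefore
$$
x-x^*=\sum_k\big(u(a_k)u(b_k)-u(b_k)u(a_k)\big)=\sum_k[u(a_k),u(b_k)]=\sum_k u([a_k,b_k]),
$$
the last equality holding because $u$ is a Lie homomorphism $\widehat{L}_{gr}\to U^{(-)}$. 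Since $[a_k,b_k]\in[L_{\mu_k},L_{\nu_k}]\subseteq L_{\mu_k+\nu_k}=L_\alpha$, I conclude that $x-x^*\in u(L_\alpha)$ for every $x\in U_\alpha$.

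Finally I would take $x\in I_\alpha$. On one hand $x-x^*\in I$, because $x\in I$ and $x^*\in I^*=I$; on the other hand $x-x^*\in u(L_\alpha)$ by the previous step. Hence $x-x^*\in I\cap u(L_\alpha)=(0)$, so $x=x^*$, as required. The argument is short, and the only delicate points are the bookkeeping of the involution against the grading together with the sign cancellation, and the recognition that antisymmetrizing a degree-$\alpha$ product of two root vectors returns it to the linear part $u(L_\alpha)$ --- precisely the subgroup where $I$ is already known to meet trivially.
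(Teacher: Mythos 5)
Your proposal is correct and follows essentially the same route as the paper's own proof: decompose $x\in I_\alpha$ via Lemma~\ref{product-of-two}, use $u(L_\mu)^*=-u(L_\mu)$ to get $x-x^*=\sum u([x_{\mu_1},x_{\mu_2}])\in u(L_\alpha)$, and conclude from $I\cap u(L_\alpha)=(0)$. The extra bookkeeping you include (that $*$ preserves the grading and that $I_\alpha$ is $*$-invariant) is harmless but not needed, since the paper has already recorded $I^*=I$ and $I\cap u(L_\alpha)=(0)$ before the lemma.
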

\begin{proof}
	Let $x\in I_{\alpha}$. By Lemma~\ref{product-of-two},  we have $$x=\sum_{\mu_1,\mu_2 \in\Delta,\,\mu_1+\mu_2=\alpha} u(x_{\mu_1}) u(x_{\mu_2}), \quad \text{where} \quad  x_{\mu_i}\in L_{\mu_i} \quad \text{for} \quad  i=1,2.$$ Hence $$ x^*=\sum u(x_{\mu_2}) u(x_{\mu_1}), \ \  \text{and} \ \  x-x^*=\sum\, u([x_{\mu_1},x_{\mu_2}])\in  u(L_{\alpha}).$$ Since  $x-x^*\in I\cap u(L_{\alpha})=(0),$  it follows that $x^* =x.$ This completes the 
	proof of the lemma.
\end{proof}

\subsection*{4.2} Let us prove  Proposition~\ref{annihilator_extension} under the assumptions of Theorem~\ref{main}.
In fact, we will show that, in this case, the map $\tau: U\to A\oplus A^{op}$ is an isomorphism. Then, we will proceed to prove Theorem~\ref{main}. We have already mentioned, see \eqref{def-e}, that there exist elements $a_{12}^{(\mu)}\in A_{12}$, $b_{21}^{(\mu)}\in A_{21}$ such that 
$$\sum_{\mu} a_{12}^{(\mu)} b_{21}^{(\mu)}=e_1, \qquad  \text{and} \qquad \sum_{\mu} b_{21}^{(\mu)} a_{12}^{(\mu)}\in A_{22}.$$
Similarly, there exist elements $c_{32}^{(\nu)}\in A_{32}$, $d_{23}^{(\nu)}\in A_{23}$ such that 
$$\sum_{\nu} c_{32}^{(\nu)} d_{23}^{(\nu)}=e_3, \qquad  \text{and} \qquad \sum_{\nu} d_{23}^{(\nu)} c_{32}^{(\nu)}\in A_{22}.$$ Consider 
the elements: 
$$
E_1=\sum_{\mu} u(a_{12}^{(\mu)}) u( b_{21}^{(\mu)}) ,\quad E_3=\sum_{\nu} u(c_{32}^{(\nu)}) u( d_{23}^{(\nu)}).
$$
\begin{lem}\label{pierce-component}
	For arbitrary elements $x_{13}\in u(A_{13})$ and $y_{32}\in u(A_{32})$, we have $E_1 x_{13} y_{32}=x_{13} y_{32}$. 
\end{lem}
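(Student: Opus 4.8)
The plan is to show that $E_1$ acts on weight-$(\omega_1-\omega_3)$ elements exactly as the idempotent $e_1$ acts in $A$, namely as a left identity, and then to push this through the extra factor $y_{32}$. First I would write $x_{13}=u(\xi)$ and $y_{32}=u(\eta)$ with $\xi\in A_{13}$ and $\eta\in A_{32}$, which is legitimate since $u(A_{13})$ and $u(A_{32})$ are by definition the images of the additive maps $u$ on the corresponding weight spaces. The anchor for the whole argument is the element $h_{12}=\sum_\mu[a_{12}^{(\mu)},b_{21}^{(\mu)}]=e_1-w$, where $w=\sum_\mu b_{21}^{(\mu)}a_{12}^{(\mu)}\in A_{22}$. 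In the associative ring $A$ one computes $[h_{12},\xi]=[e_1,\xi]-[w,\xi]=\xi$, because $e_1\xi=\xi$, $\xi e_1=0$ and $w\xi=\xi w=0$.

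The first real step is to transport this identity into $U$. Since $\omega_1-\omega_3\in\Delta$, the universal graded central extension $\widehat{L}_{gr}\to L$ restricts to an isomorphism on the $(\omega_1-\omega_3)$-component (its kernel lies in degree $0$), so the relation $[h_{12},\xi]=\xi$ lifts verbatim to $\widehat{L}_{gr}$. Applying the Lie homomorphism $u$ and recording that $u(h_{12})=\sum_\mu[u(a_{12}^{(\mu)}),u(b_{21}^{(\mu)})]=E_1-E_1^{*}$, where $E_1^{*}=\sum_\mu u(b_{21}^{(\mu)})u(a_{12}^{(\mu)})$ follows from the skew-symmetry of the $u(L_\alpha)$ under the involution $*$, yields $[E_1-E_1^{*},u(\xi)]=u(\xi)$.

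Next I would expand this bracket and discard the terms that vanish by the specialization property of $u$. The product $u(a_{12}^{(\mu)})u(\xi)$ vanishes because $(\omega_1-\omega_2)+(\omega_1-\omega_3)=2\omega_1-\omega_2-\omega_3\notin\Delta\cup\{0\}$; this kills both $E_1^{*}u(\xi)$ and $u(\xi)E_1$, leaving the identity $E_1u(\xi)+u(\xi)E_1^{*}=u(\xi)$. Right-multiplying by $u(\eta)$ and using that $u(a_{12}^{(\mu)})u(\eta)=0$, since $(\omega_1-\omega_2)+(\omega_3-\omega_2)=\omega_1-2\omega_2+\omega_3\notin\Delta\cup\{0\}$, makes the term $u(\xi)E_1^{*}u(\eta)$ collapse, and one is left with $E_1u(\xi)u(\eta)=u(\xi)u(\eta)$, i.e. $E_1x_{13}y_{32}=x_{13}y_{32}$.

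The step I expect to need the most care is the transport of $[h_{12},\xi]=\xi$ from $A$ through $\widehat{L}_{gr}$ and then through $u$: one must be sure the relation is genuinely an identity in the Lie ring rather than merely in $A$, that it survives the passage to the universal graded central extension on the nonzero weight component, and that $u$, which is only a Lie homomorphism, nevertheless converts it into the stated associative identity once the vanishing products are removed. The remaining verifications are pure bookkeeping on the $A_2$ root lattice, most cleanly done via the sign characters $f_g$ of Lemma~\ref{tech-lemma}, which certify at a glance that each of the relevant two-factor weight sums leaves $\Delta\cup\{0\}$.
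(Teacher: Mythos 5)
Your proof is correct and takes essentially the same route as the paper: the paper likewise sets $\rho=\sum_{\mu}[u(a_{12}^{(\mu)}),u(b_{21}^{(\mu)})]$ (your $E_1-E_1^{*}$), transports the relation $\big[\sum_{\mu}[a_{12}^{(\mu)},b_{21}^{(\mu)}],a_{13}\big]=a_{13}$ into $U$ to get $[\rho,x_{13}]=x_{13}$, and then invokes the specialization property to kill the cross terms (the paper's $x_{13}\rho\, y_{32}=0$ corresponds exactly to your vanishing of $E_1^{*}u(\xi)$, $u(\xi)E_1$, and $u(\xi)E_1^{*}u(\eta)$ via the same root-lattice checks). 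The only differences are expository: you make explicit the lift of the relation through $\widehat{L}_{gr}$ and the identity $E_1^{*}x_{13}=0$ needed to pass from $\rho$ to $E_1$, both of which the paper leaves implicit.
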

\begin{proof}
	Consider the commutator $\rho=\Sigma_{\mu} [u(a_{12}^{(\mu)}),u(b_{21}^{(\mu)})]$. Let $x_{13}=u(a_{13})$ for some $a_{13}\in A_{13}$. Then we have $$\sum_{\mu}\big[ [a_{12}^{\mu},b_{21}^{\mu}],a_{13} \big]=a_{13}, \quad  \text{which implies} \quad 
	[\rho, x_{13}] =u\big(\sum_{\mu} \big[ [a_{12}^{\mu},b_{21}^{\mu}],a_{13}\big]\big)=u(a_{13})=x_{13}.
	$$
	Thus, we obtain $\rho x_{13} y_{32}=x_{13} y_{32}+x_{13}\rho y_{32}$. Since $u$ is a specialization, it follows that 
	$$x_{13}\rho y_{32}=\sum_{\mu} x_{13}[u(a_{12}^{(\mu)}),u(b_{21}^{(\mu)})]y_{32}=0.$$ This completes the proof of the lemma.
\end{proof}

 Furthermore,
$$
x_{13}=\sum_{\nu} u\big([a_{13},[c_{32}^{(\nu)},d_{23}^{(\nu)}]]\big)=
\sum_{\nu} \big( x_{13} u(c_{32}^{(\nu)}) u(d_{23}^{(\nu)})+u(d_{23}^{(\nu)}) u(c_{32}^{(\nu)})x_{13}\big).
$$
Since we have $E_1 u(d_{23}^{(\nu)})=0$, it follows from Lemma~\ref{pierce-component} that
$$
E_1 x_{13}=\sum_{\nu} x_{13} u(c_{32}^{(\nu)}) u(d_{23}^{(\nu)})=x_{13} E_3.$$ On the
other hand,  
$$
\sum x_{13} u(c_{32}^{\nu}) u(d_{23}^{(\nu)})=x_{13}-\sum_{\nu} u(d_{23}^{\nu})u(c_{32}^{(\nu)})x_{13}=
x_{13}-E^*_3 x_{13}.
$$
We proved that 
\begin{equation}\label{E1E3}
	E_1x_{13}=x_{13} E_3=(1-E_3^*)x_{13}.
\end{equation}
\begin{lem}\label{x130} If $b=(1-E_1)x_{13}\in I$, where $x_{13}\in u(A_{13})$, then $x_{13}=0$.
\end{lem}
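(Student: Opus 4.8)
The plan is to push $b$ through the specialization $\tau: U\to A\oplus A^{op}$ and read off what $b\in I=\operatorname{Ker}\tau$ forces. First I would write $x_{13}=u(a_{13})$ for some $a_{13}\in A_{13}$ (recall $u$ is injective on $L_\alpha$, $\alpha\in\Delta$), so that by the defining property $\tau(u(a))=a\oplus(-a^{op})$ we have $\tau(x_{13})=a_{13}\oplus(-a_{13}^{op})$, and I would note that $(1-E_1)x_{13}$ stands for $x_{13}-E_1x_{13}$ in the (possibly non-unital) ring $U$.

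Next I would compute $\tau(E_1)$. Since $E_1=\sum_\mu u(a_{12}^{(\mu)})u(b_{21}^{(\mu)})$ and $\tau$ is a ring homomorphism, the multiplication rule $a^{op}b^{op}=(ba)^{op}$ in $A^{op}$ gives
$$\tau(E_1)=\sum_\mu a_{12}^{(\mu)}b_{21}^{(\mu)}\oplus\big(b_{21}^{(\mu)}a_{12}^{(\mu)}\big)^{op}=e_1\oplus f^{op},$$
where $f=\sum_\mu b_{21}^{(\mu)}a_{12}^{(\mu)}\in A_{22}$.

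The only real care needed is the Peirce bookkeeping. Multiplying componentwise in $A\oplus A^{op}$,
$$\tau(E_1x_{13})=\tau(E_1)\tau(x_{13})=e_1a_{13}\oplus\big(-(a_{13}f)^{op}\big).$$
Here $e_1a_{13}=a_{13}$ since $a_{13}\in e_1Ae_3$, while $a_{13}f=0$ since $a_{13}\in e_1Ae_3$ and $f\in e_2Ae_2$ are killed by the orthogonality $e_3e_2=0$. Hence $\tau(E_1x_{13})=a_{13}\oplus 0$, and therefore
$$\tau(b)=\tau(x_{13})-\tau(E_1x_{13})=\big(a_{13}\oplus(-a_{13}^{op})\big)-\big(a_{13}\oplus 0\big)=0\oplus(-a_{13}^{op}).$$

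Finally, the hypothesis $b\in I=\operatorname{Ker}\tau$ forces $0\oplus(-a_{13}^{op})=0$, that is $a_{13}^{op}=0$, whence $a_{13}=0$ and $x_{13}=u(a_{13})=0$. I do not expect a genuine obstacle here: the mechanism is that the first ($A$) component of $\tau(b)$ is cancelled by the projection-like action of $E_1$, while the surviving second ($A^{op}$) component still detects $a_{13}$; the whole content is correctly tracking the opposite-ring multiplication together with the Peirce annihilation relations.
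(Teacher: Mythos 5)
Your proof is correct, but it follows a genuinely different route from the paper's. You push everything down through the epimorphism $\tau\colon U\to A\oplus A^{op}$: writing $x_{13}=u(a_{13})$, you compute $\tau(E_1)=e_1\oplus f^{op}$ with $f=\sum_\mu b_{21}^{(\mu)}a_{12}^{(\mu)}\in A_{22}$, use the Peirce orthogonality $a_{13}f=0$ to get $\tau(E_1x_{13})=a_{13}\oplus 0$, and conclude $\tau\bigl((1-E_1)x_{13}\bigr)=0\oplus(-a_{13}^{op})$, so the hypothesis $b\in I=\operatorname{Ker}\tau$ forces $a_{13}=0$; every step checks out, and all the ingredients ($\tau$, its defining property $\tau(u(a))=a\oplus(-a^{op})$ on $u(L_\alpha)$, and $I=\operatorname{Ker}\tau$) are available before the lemma, so there is no circularity. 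The paper instead argues internally in $U$: by Lemma~\ref{symmetric} the element $b\in I_{\omega_1-\omega_3}$ is symmetric, and combining $b=b^*$ with the identity \eqref{E1E3} (namely $E_1x_{13}=x_{13}E_3=(1-E_3^*)x_{13}$ and the skew-symmetry $x_{13}^*=-x_{13}$) yields $x_{13}-E_1x_{13}=-x_{13}E_3=-E_1x_{13}$, hence $x_{13}=0$. Your computation is shorter and more transparent — it even gives the two-way statement that $(1-E_1)x_{13}\in I$ if and only if $x_{13}=0$, since the $A^{op}$-component of $\tau(b)$ retains $a_{13}$ in full — and it dispenses with Lemma~\ref{symmetric} and \eqref{E1E3} for this step. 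What the paper's detour buys is reuse of machinery: the involution $*$, the symmetry of $I_\alpha$, and \eqref{E1E3} are needed again in the subsequent lemmas (e.g., in showing $I_{\omega_1-\omega_3}=E_3^*I_{\omega_1-\omega_3}=(0)$ and throughout Subsection~4.3), whereas your direct method works here precisely because $x_{13}$ lies in $u(A_{13})$, where the $\tau$-image is explicitly known; for general elements of $U_\alpha$ one must still pass through the decompositions of Lemma~\ref{product-of-two}, which is why the paper's later arguments cannot be shortcut in the same way.
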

\begin{proof} Let $b=(1-E_1)x_{13}\in I_{\omega_1-\omega_3}$. Then, by Lemma~\ref{symmetric}, we have $b=b^*$. 
	By \eqref{E1E3}, 
	$E_1 x_{13}=(1-E_3^*) x_{13}$. Therefore, $b=(1-E_1)x_{13}=E_3^* x_{13}$ and $b^*=-x_{13}E_3$.
	Thus, $(1-E_1)x_{13}=-x_{13}E_3$,  or equivalently, $x_{13}-E_1 x_{13}=-x_{13} E_3$.  By \eqref{E1E3}, we have 
	$E_1 x_{13}=x_{13} E_3$, which implies  $x_{13}=0$.  This completes the proof of the lemma.
\end{proof}
\begin{lem}\label{E1-zero}
	For an arbitrary element $y\in I_{\omega_1-\omega_3}$, we have $E_1 y=y$.
\end{lem}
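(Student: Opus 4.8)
The plan is to upgrade the key relation \eqref{E1E3} from the generators $u(A_{13})$ to the entire graded component $U_{\omega_1-\omega_3}$, and then to show that on the kernel the resulting ``$E_3^*$-correction'' is absorbed, so that $E_1$ restricts to the identity. First I would invoke Lemma~\ref{product-of-two} to write an arbitrary $w\in U_{\omega_1-\omega_3}$ as a sum of products $u(L_\mu)u(L_\nu)$ with $\mu,\nu\in\Delta$ and $\mu+\nu=\omega_1-\omega_3$; the only admissible decompositions are $(\omega_1-\omega_2)+(\omega_2-\omega_3)$ and its reversal, so $w$ is a sum of products of type $u(A_{12})u(A_{23})$ and type $u(A_{23})u(A_{12})$. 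The bridging identity $u(p_{12})u(q_{23})=u(p_{12}q_{23})+u(q_{23})u(p_{12})$, which follows from $u$ being a Lie homomorphism together with $A_{23}A_{12}=(0)$, lets me trade every product of the first type for a genuine $u(A_{13})$-element plus a product of the second type.

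Next I would compute the action of $E_1$ on each generator. On $u(A_{13})$ relation \eqref{E1E3} gives $E_1 u(a_{13})=(1-E_3^*)u(a_{13})$, while the specialization property forces $u(A_{21})u(A_{23})=(0)$, so $E_1$ annihilates the leading factor of every product beginning with an element of $u(A_{23})$. Combining these computations with the rewriting above, I expect to reach the clean formula $E_1 w=(1-E_3^*)w$ for \emph{all} $w\in U_{\omega_1-\omega_3}$, extending \eqref{E1E3} beyond the generators. Granting this, the lemma reduces to a single statement about the kernel: for $y\in I_{\omega_1-\omega_3}$ one has $E_1 y=y-E_3^*y$, so it suffices to verify that the correction term contributes nothing, equivalently that $E_1$ restricts to the identity on $I_{\omega_1-\omega_3}$, yielding $E_1 y=y$.

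Here I would bring in the two distinctive features of kernel elements. On one hand $\tau(y)=0$ forces the $A_{13}$- and $A_{13}^{op}$-components of any representation of $y$ to vanish separately; on the other hand every element of $I_{\omega_1-\omega_3}$ is symmetric by Lemma~\ref{symmetric}, which after applying $*$ yields the companion right-handed relation $E_1 y=yE_1^*$. Feeding the $u(A_{13})$-pieces extracted from $y$ into Lemma~\ref{x130} (which says that $(1-E_1)x_{13}\in I$ forces $x_{13}=0$) should rule out any genuinely two-fold-product obstruction and pin the surviving expression to $y$ itself. The main obstacle is exactly this last step: the relation $E_1 w=(1-E_3^*)w$ holds \emph{exactly} only on $u(A_{13})$ and merely modulo $I$ on general two-fold products, so transporting it to all of $U_{\omega_1-\omega_3}$ while keeping the kernel constraints under control is delicate, and the interplay between the left action of $E_1$, the right action of $E_1^*$ coming from symmetry, and the rigidity supplied by Lemma~\ref{x130} must be orchestrated carefully. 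I expect this bookkeeping — eliminating all off-diagonal Peirce interference via the specialization relations — to be the true crux of the proof.
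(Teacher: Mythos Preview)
Your decomposition goes in the direction that makes $E_1$ \emph{annihilate} rather than \emph{fix} the two-fold piece, and that is exactly where the argument stalls. After your rewriting you have $w=x_{13}+z$ with $z\in u(A_{23})u(A_{12})$; the specialization relation $u(A_{21})u(A_{23})=(0)$ indeed gives $E_1z=0$, so $E_1w=E_1x_{13}=(1-E_3^*)x_{13}$. But the formula $E_1w=(1-E_3^*)w$ you aim for would additionally require $(1-E_3^*)z=0$, i.e.\ $E_3^*z=z$, and there is no reason for this: $E_3^*=\sum_\nu u(d_{23}^{(\nu)})u(c_{32}^{(\nu)})$ is built from $\sum_\nu d_{23}^{(\nu)}c_{32}^{(\nu)}\in A_{22}$, which is \emph{not} assumed to equal $e_2$, so no analogue of Lemma~\ref{pierce-component} is available from this side. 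Your own caveat that the relation holds ``merely modulo $I$'' is precisely the obstruction, and the subsequent appeal to symmetry and Lemma~\ref{x130} becomes circular---you end up needing the very statement you are trying to prove.

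The paper rewrites in the \emph{opposite} direction: it absorbs $u(A_{23})u(A_{12})$ into $u(A_{13})+u(A_{12})u(A_{23})$, so that $y=x_{13}+y'$ with $y'\in u(A_{12})u(A_{23})$. The decisive step you are missing is a second reduction: from $A_{12}=[A_{13},A_{32}]$ together with the specialization relation $u(A_{13})u(A_{23})=(0)$ one gets
\[
u(A_{12})u(A_{23})\subseteq u(A_{13})u(A_{32})u(A_{23}).
\]
Now Lemma~\ref{pierce-component} applies \emph{verbatim} to give $E_1y'=y'$ exactly, not merely modulo $I$. Hence $y-E_1y=(1-E_1)x_{13}$, which lies in $I$ since both $y$ and $E_1y$ do; Lemma~\ref{x130} forces $x_{13}=0$, and $E_1y=E_1y'=y'=y$ follows at once. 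The whole proof is four lines once you decompose so as to place a factor from $u(A_{13})$ on the \emph{left}, where $E_1$ is already known to act as the identity.
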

\begin{proof}
	There is one way to represent $\omega_1-\omega_3$ as a sum of two roots, namely
	$\omega_1-\omega_3=(\omega_1-\omega_2)+(\omega_2-\omega_3)$. Hence, by Lemma~\ref{product-of-two},
	$$I_{\omega_1-\omega_3}\subseteq u(A_{12})u(A_{23})+u(A_{23})u(A_{12})\subseteq u(A_{13})+u(A_{12})u(A_{23}).$$
	Let $y\in I_{\omega_1-\omega_3}$, and write $y=x_{13}+y'$, where $x_{13}\in u(A_{13})$ and $y'\in u(A_{12})u(A_{23})$. We have $$ u(A_{12})= u([A_{13},A_{32}])\subseteq u(A_{13}) u(A_{32})+u(A_{32})u(A_{13}).$$ Hence, $u(A_{12})u(A_{23})\subseteq u(A_{13}) u(A_{32}) u(A_{23}).$ By Lemma~\ref{pierce-component}, it follows that $E_1y'=y'$. Thus, $E_1 y=E_1 x_{13}+y'$. Therefore, $y-E_1 y=(1-E_1)x_{13}\in I$. By 
	Lemma~\ref{x130}, we obtain $x_{13}=0$, which proves the assertion of the lemma.
\end{proof}
\begin{lem} We have $I=I_0$ and $UI=IU=(0)$.
\end{lem}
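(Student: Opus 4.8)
The plan is to prove the two assertions in turn: first that the graded ideal $I=\operatorname{Ker}\tau$ is concentrated in degree $0$, and then that this already forces $I$ to annihilate $U$. Since $\tau$ is a $\Gamma$-graded homomorphism, $I$ is a graded ideal, $I=I_0\oplus\bigoplus_{\alpha\in\Delta}I_\alpha$ with $I_\alpha=I\cap U_\alpha$, so the first assertion $I=I_0$ amounts to showing $I_\alpha=(0)$ for every $\alpha\in\Delta$. The entire construction is symmetric under the action of $S_3$ permuting the full idempotents $e_1,e_2,e_3$ and the basis vectors $\omega_1,\omega_2,\omega_3$, and this action is transitive on the six roots of $\Delta$; hence it suffices to prove $I_{\omega_1-\omega_3}=(0)$.

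So I would fix $y\in I_{\omega_1-\omega_3}$ and feed in the two facts already available for such $y$. From Lemma~\ref{E1-zero} (whose proof, via Lemma~\ref{x130}, in fact establishes $x_{13}=0$) one gets $y\in u(A_{12})u(A_{23})$ together with $E_1y=y$, while Lemma~\ref{symmetric} gives $y^*=y$. Applying the involution to $E_1y=y$ yields $y=y^*=(E_1y)^*=y^*E_1^*=yE_1^*$, where $E_1^*=\sum_\mu u(b_{21}^{(\mu)})u(a_{12}^{(\mu)})$ (the two sign changes $u(\cdot)^*=-u(\cdot)$ cancel). Writing $y=\sum_\nu u(p_{12}^{(\nu)})u(q_{23}^{(\nu)})$ with $p_{12}^{(\nu)}\in A_{12}$ and $q_{23}^{(\nu)}\in A_{23}$, every term of $yE_1^*$ contains the factor $u(q_{23}^{(\nu)})u(b_{21}^{(\mu)})$; since $q_{23}^{(\nu)}\in L_{\omega_2-\omega_3}$, $b_{21}^{(\mu)}\in L_{\omega_2-\omega_1}$ and $(\omega_2-\omega_3)+(\omega_2-\omega_1)=2\omega_2-\omega_1-\omega_3\notin\Delta\cup\{0\}$, the specialization property forces $u(q_{23}^{(\nu)})u(b_{21}^{(\mu)})=0$. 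Hence $yE_1^*=0$, so $y=0$ and $I_{\omega_1-\omega_3}=(0)$; by symmetry $I=I_0$.

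For the second assertion I would argue that $I=I_0$ already pushes $I$ into the annihilator. Take $z\in I_0$ and $x\in u(L_\alpha)$ with $\alpha\in\Delta$: as $I$ is a two-sided ideal and the grading is multiplicative, both $zx$ and $xz$ lie in $I\cap U_\alpha=I_\alpha=(0)$. Since $U$ is non-unital and generated by $\bigcup_{\alpha\in\Delta}u(L_\alpha)$, every element of $U$ is a finite sum of products of such generators, and $z(g_1\cdots g_k)=(zg_1)g_2\cdots g_k=0$, and symmetrically $(g_1\cdots g_k)z=0$. Therefore $zU=Uz=(0)$, that is, $UI=IU=(0)$.

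The only genuine obstacle is the first assertion, and the crucial subtlety there is that one really needs the refined information $y\in u(A_{12})u(A_{23})$ rather than merely $y\in U_{\omega_1-\omega_3}$: had $y$ kept a component $x_{13}\in u(A_{13})$, the corresponding factor $u(x_{13})u(b_{21}^{(\mu)})$ would sit in degree $\omega_2-\omega_3\in\Delta$ and would not be killed by the specialization property, so the argument $yE_1^*=0$ would break down. Once $I=I_0$ is secured, the annihilator statement is a routine consequence of the grading together with the generation of $U$ by $\bigcup_{\alpha\in\Delta}u(L_\alpha)$.
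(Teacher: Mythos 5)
Your proof is correct and takes essentially the same route as the paper: both reduce by symmetry to $I_{\omega_1-\omega_3}$, combine the $*$-symmetry of $I_\alpha$ (Lemma~\ref{symmetric}) with the identity $E_1y=y$ of Lemma~\ref{E1-zero}, kill the result through the specialization vanishing of a product in degree $2\omega_2-\omega_1-\omega_3\notin\Delta\cup\{0\}$, and settle $UI=IU=(0)$ by the same generation argument from $\bigcup_{\alpha\in\Delta}u(L_\alpha)$. The only (valid) streamlining is that where the paper additionally establishes $yE_3=y$ and concludes from $E_1E_3^*=0$, you instead apply $*$ to $E_1y=y$ to get $y=yE_1^*$ and annihilate it term by term using the refined membership $y\in u(A_{12})u(A_{23})$, which — as you correctly flag — must be extracted from the proof, not merely the statement, of Lemma~\ref{E1-zero}.
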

\begin{proof}
	Analogously to the proof of Lemma~\ref{E1-zero}, we can show that for an arbitrary element $y\in I_{\omega_1-\omega_3}$, we have $y E_3=y$. Now, $I_{\omega_1-\omega_3}=E_1 I_{\omega_1-\omega_3}=I_{\omega_1-\omega_3} E_3$. Hence, $I_{\omega_1-\omega_3}=I_{\omega_1-\omega_3}^*=E_3^* I_{\omega_1-\omega_3}$. But 
	$$E_1 E_3^*=\sum_{\mu,\nu}u(a_{12}^{\mu})u(b_{21}^{\mu})u(d_{23}^{\nu})u(c_{32}^{\nu})=0.$$ This implies that 
	$I_{\omega_1-\omega_3}=(0)$ and similarly, $I_{\alpha}=(0)$ for an arbitrary root $\alpha\in\Delta$. Thus, we have proved that
	$I=I_0$. Since the ring $U$ is generated by the subset $\cup_{\alpha\in\Delta} U_{\alpha}$, it follows that $UI=IU=I$.
	This completes the proof of the lemma.
\end{proof}

\noindent \textit{Proof of Proposition \ref{annihilator_extension} under the assumptions of Theorem \ref{main}.} To complete the proof that $\tau: U\to A\oplus A^{op} $ is an isomorphism, we need to show that $I=(0)$. Define 
$$\rho_{\omega_1-\omega_3}=
\sum_{\mu}\big[u(a_{12}^{(\mu)}), u(b_{21}^{(\mu)})\big]+\sum_{\nu}\big[u(d_{23}^{(\nu)}), u(c_{32}^{(\nu)})\big].$$
From \eqref{E1E3}, it follows that
$(1-\rho_{\omega_1-\omega_3}) u(L_{\omega_1-\omega_3})=(0)$.
Similarly, for every root $\alpha\in\Delta$, there exists an element 
$\rho_{\alpha}\in\Sigma_{\gamma}[u(L_{\gamma}), u(L_{-\gamma})]$ such that $(1-\rho_{\alpha})u(L_{\alpha})=(0)$.
Let $\wp=\Pi_{\alpha\in\Delta}(1-\rho_{\alpha})$, where the factors appear in the arbitrary order. We claim that for an arbitrary root $\alpha\in\Delta$, $\wp u(L_{\alpha})=(0)$. Indeed, let $\wp=\wp'(1-\rho_{\alpha})\wp''$.
Clearly, $\wp''u(L_{\alpha})\subseteq u(L_{\alpha})\cdots$. Hence, 
$$\wp  u(L_{\alpha})\subseteq \wp'(1-\rho_{\alpha})u(L_{\alpha})\dots =(0).$$ This implies that
$\wp U=(0)$ and $\{x\in U\,|\, Ux=(0)\}=(0)$. Thus, $I=(0)$.
We have now proven  that under the assumption of Theorem~\ref{main},  $\tau: U\to A\oplus A^{op}$ is an isomorphism.

\begin{proof} [Proof of Theorem~\ref{main}] Let $B$ be a ring, and let $\varphi:[A,A]\to [B,B]$ be an isomorphism of
	Lie rings. By Proposition~\ref{speciality}, the homomorphism $\varphi: [A,A]\to B$ is a specialization. We have proved above that $\theta:[A,A]\to A\oplus A^{op},$ $\theta(a)=a \oplus (-a^{op}), $ $a\in[A,A],$ is the  universal specialization of 
	$[A,A]$. Hence, there exists a homomorphism $\chi: A\oplus A^{op}\to B$ such 
	that $\varphi(a)=\chi(a\oplus -a^{op})$, $a\in[A,A]$. Then, the compositions $\psi_1(a)=\chi(a\oplus 0)$ and $\psi_2(a)=\chi(0\oplus a^{op})$ define a homomorphism and an anti-homomorphism from $A$ to $B$, respectively,
	satisfying $\varphi(a)=\psi_1(a)-\psi_2(a)$. Hence, the Lie isomorphism $\varphi$ extends to a standard homomorphism. This completes the proof of 
	Theorem~\ref{main}. \end{proof}

\subsection*{4.3} We will now  prove Proposition~\ref{annihilator_extension} under the assumptions of Theorem~\ref{main2}, and then proceed to prove Theorem~\ref{main2}.

Let  $u:[A,A]\to U$ be the universal specialization of the $\Delta$-graded ring $[A,A].$ From what we have proven 
above, it follows that the subring $U'$ of $U$, generated by $u(A_{ij})$ for $1\leq i\neq j\leq 3$, is isomorphic to $A'\oplus (A')^{op}$, where $A'=(\Sigma_{i=1}^3 e_i)A(\Sigma_{i=1}^3 e_i)$. In particular, the subring $U'$ has an identity 
element $e\in U'_0$, $e=e^*$, $e=u(e_1 +e_2 + e_3)$.
\begin{lem}\label{lemma19}
	For an arbitrary root $\alpha\in\Delta$, we have $U_{\alpha}\subseteq (1-e)Ue+eU(1-e)+eUe$.
\end{lem}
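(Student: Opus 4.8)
The plan is to prove the equivalent statement that the Peirce corner $(1-e)U_\alpha(1-e)$ vanishes for every $\alpha\in\Delta$. Since $e\in U'_0\subseteq U_0$, every $x\in U_\alpha$ satisfies $x=exe+ex(1-e)+(1-e)xe+(1-e)x(1-e)$ with each summand again in $U_\alpha$, and the asserted inclusion is exactly the vanishing of the last summand. By Lemma~\ref{product-of-two} I may write $U_\alpha=\sum_{\mu+\nu=\alpha}u(L_\mu)u(L_\nu)$, so it will suffice to show $(1-e)u(x_\mu)u(y_\nu)(1-e)=0$ for all $\mu,\nu\in\Delta$ with $\mu+\nu=\alpha$. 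Writing $\mu=\omega_p-\omega_q$, $\nu=\omega_q-\omega_r$, the relation $\mu+\nu=\alpha\in\Delta$ forces $p,q,r$ pairwise distinct. The one tool I will use throughout is that $e$ is the identity of $U'$, so that $(1-e)z=z(1-e)=0$ for every $z\in U'$; in particular this holds for $z=u(a_{st})$ whenever $s,t\leq 3$, since then $u(a_{st})\in U'$.

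First I will dispose of the case $q\leq 3$. If also $p\leq 3$, the left factor $u(x_\mu)$ lies in $U'$, so $(1-e)u(x_\mu)=0$ kills the expression; if instead $r\leq 3$, the right factor $u(y_\nu)$ lies in $U'$, so $u(y_\nu)(1-e)=0$ does. As $p,q,r$ are distinct, $p$ and $r$ cannot both be $4$, so one of these applies whenever $q\leq 3$.

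The remaining and only delicate case is $q=4$, i.e. $\mu=\omega_p-\omega_4$, $\nu=\omega_4-\omega_r$ with $p,r\leq 3$, $p\neq r$; now neither factor lies in $U'$, and this is where the work is. Here I will rewrite the right factor. Taking $k$ to be the unique index in $\{1,2,3\}\setminus\{p,r\}$, fullness of $e_k$ yields $A_{4r}=A_{4k}A_{kr}=[A_{4k},A_{kr}]$ (as in Lemma~\ref{example-graded}), so $a_{4r}=\sum_\lambda[q^{(\lambda)}_{4k},p^{(\lambda)}_{kr}]$ with $q^{(\lambda)}_{4k}\in A_{4k}$, $p^{(\lambda)}_{kr}\in A_{kr}$. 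Expanding each bracket gives
$$u(a_{p4})u(a_{4r})(1-e)=\sum_\lambda\Big(u(a_{p4})u(q^{(\lambda)}_{4k})u(p^{(\lambda)}_{kr})(1-e)-u(a_{p4})u(p^{(\lambda)}_{kr})u(q^{(\lambda)}_{4k})(1-e)\Big).$$
In the first term $u(p^{(\lambda)}_{kr})\in U'$, so $u(p^{(\lambda)}_{kr})(1-e)=0$. In the second term the leading product $u(a_{p4})u(p^{(\lambda)}_{kr})$ already vanishes, because its roots $\omega_p-\omega_4$ and $\omega_k-\omega_r$ involve the four distinct indices $p,k,r,4$, so their sum lies outside $\Delta\cup\{0\}$ and $u$ is a specialization. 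Hence $u(a_{p4})u(a_{4r})(1-e)=0$, and a fortiori $(1-e)u(a_{p4})u(a_{4r})(1-e)=0$.

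Together these cases give $(1-e)U_\alpha(1-e)=0$ for all $\alpha\in\Delta$, which is the claim. The main obstacle is exactly the case $q=4$, where both factors sit outside $U'$; the idea that unlocks it is to spend the fullness of the third index $k$ to replace the index-$4$ middle factor by a commutator lying in $U'$, after which speciality annihilates the one surviving summand.
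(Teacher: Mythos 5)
Your overall strategy is sound and genuinely different from the paper's: you reduce the lemma to the corner-vanishing statement $(1-e)U_{\alpha}(1-e)=(0)$ and attack the generators supplied by Lemma~\ref{product-of-two} directly, and your treatment of the products $u(A_{p4})u(A_{4r})$ (expanding $A_{4r}=[A_{4k},A_{kr}]$ via fullness of $e_k$ and killing one term by speciality, the other by $u(A_{kr})\subseteq U'$) is correct. However, there is a concrete gap in your enumeration of cases: you assert that every ordered pair $\mu,\nu\in\Delta$ with $\mu+\nu\in\Delta$ has the head-to-tail form $\mu=\omega_p-\omega_q$, $\nu=\omega_q-\omega_r$. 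The other possibility, $\mu=\omega_p-\omega_q$, $\nu=\omega_r-\omega_p$ (the cancelled index is the head of $\mu$ and the tail of $\nu$), also yields $\mu+\nu=\omega_r-\omega_q\in\Delta$, and Lemma~\ref{product-of-two} sums over \emph{all} ordered pairs, so products such as $u(A_{42})u(A_{14})\subseteq U_{\omega_1-\omega_2}$ genuinely occur. These are not vacuously zero: under $\tau:U\to A\oplus A^{op}$ one has $\tau\bigl(u(a_{pq})u(b_{rp})\bigr)=0\oplus(b_{rp}a_{pq})^{op}$, i.e., they are precisely the generators supported on the $A^{op}$ summand, and your argument as written never touches them. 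In particular, the subcase $p=4$, that is $(1-e)u(A_{4q})u(A_{r4})(1-e)$ with $q,r\leq 3$, is exactly as delicate as the case you did treat and is simply missing.

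The gap is repairable by mirroring your own argument. If $p\leq 3$, then one of the two factors lies in $U'$ and is annihilated by $1-e$ on the appropriate side (e.g., if $q=4$ then $r\leq 3$, so $(1-e)$ kills $u(A_{rp})\subseteq U'$ on the right). If $p=4$, choose $k\in\{1,2,3\}\setminus\{q,r\}$ and expand $A_{r4}=[A_{rk},A_{k4}]$: in the term $u(a_{4q})u(p_{rk})u(q_{k4})(1-e)$ the prefix $u(a_{4q})u(p_{rk})$ vanishes by speciality, since $(\omega_4-\omega_q)+(\omega_r-\omega_k)\notin\Delta\cup\{0\}$, while the other term dies because $u(p_{rk})(1-e)=0$. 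With this supplement your proof is complete, and it is worth noting how it differs from the paper's: the paper does not prove that the problematic products vanish in the $(1-e)U(1-e)$ corner at all; instead it uses the element $h_{13}$ and the derivation identity $[u(h_{13}),x]=x$ to write any $x\in u(A_{14})u(A_{42})$ as $u(h_{13})x-xu(h_{13})\in eU+Ue$, and for $\alpha=\omega_1-\omega_4$ observes that $u(A_{12}),u(A_{13})\subseteq eUe$. Your route, once patched, proves the stronger corner-vanishing statement, at the cost of the extra fullness/speciality computation that the bracket trick with $h_{13}$ avoids.
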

\begin{proof}
	Without loss of generality, we   consider only two cases: $\alpha=\omega_1-\omega_2$ and $\alpha=\omega_1-\omega_4$. By Lemma~\ref{product-of-two}, $U_{\omega_1-\omega_2}\subseteq U'+u(A_{14})u(A_{42})$. For an arbitrary  
	element $x\in u(A_{14}) u(A_{42})$, we have $[u(h_{13}), x]=x$, which implies the assumption of the lemma for $U_{\omega_1-\omega_2}$. Again, by Lemma~\ref{product-of-two}, $$U_{\omega_1-\omega_4}\subseteq \sum_{i=2}^3 u(A_{1i})u(A_{i4})\subseteq eUe+eU(1-e)+(1-e)Ue,$$ since $u(A_{12})$, $u(A_{13})\subseteq eUe$. This completes the proof of the lemma.
\end{proof}
\begin{lem}
	For an arbitrary root $\alpha\in\Delta$, we have $I_{\alpha}=(0)$.
\end{lem}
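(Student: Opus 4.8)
The plan is to prove $I_\alpha = (0)$ for every $\alpha\in\Delta$ by reducing the $A_3$ situation to the already-established $A_2$ case, exploiting the idempotent $e=u(e_1+e_2+e_3)$ and the structure of $U'\cong A'\oplus (A')^{op}$ that was identified above. By symmetry under the action permuting the four indices, it suffices to handle two representative roots, say $\alpha=\omega_1-\omega_2$ (a root ``internal'' to the first three indices) and $\alpha=\omega_1-\omega_4$ (a root ``touching'' the fourth index). The key tool is Lemma~\ref{lemma19}, which confines $U_\alpha$ to the off-diagonal and corner Peirce pieces relative to $e$, combined with Lemma~\ref{symmetric}, which forces every element of $I_\alpha$ to be symmetric under the involution $*$.

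First I would treat $\alpha=\omega_1-\omega_2$. By Lemma~\ref{product-of-two}, $U_{\omega_1-\omega_2}\subseteq U'+u(A_{14})u(A_{42})$, and since $U'\cong A'\oplus(A')^{op}$ contains the identity $e$, the $U'$-part already belongs to $eUe$. Take $x\in I_{\omega_1-\omega_2}$ and decompose it accordingly as $x=x'+x''$ with $x'\in U'_{\omega_1-\omega_2}\subseteq eUe$ and $x''\in u(A_{14})u(A_{42})$. Using the identity $[u(h_{13}),x'']=x''$ established in the proof of Lemma~\ref{lemma19} together with the symmetry $x^*=x$ from Lemma~\ref{symmetric}, I would separate the two summands: applying $e$ on both sides (multiplying by $e$ and by $1-e$) isolates $x''=(1-e)xe$ or the analogous corner piece, while $x'=exe$ lands in $U'$. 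Since $U'$ is isomorphic to $A'\oplus(A')^{op}$ via $\tau$ restricted to $U'$ — which we proved is an isomorphism in the $A_2$ case — the component $x'$ maps injectively, so $x'\in I$ forces $x'=0$. The remaining piece $x''$ is then shown to vanish by the same annihilator argument used in Lemma~\ref{x130} and Lemma~\ref{E1-zero}, now carried out with the idempotent-type element $E_1$ (or its analogue in the $A_3$ setting) acting on $u(A_{14})u(A_{42})$.

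For $\alpha=\omega_1-\omega_4$, Lemma~\ref{lemma19} gives $U_{\omega_1-\omega_4}\subseteq \sum_{i=2}^3 u(A_{1i})u(A_{i4})$, with $u(A_{12}),u(A_{13})\subseteq eUe$, so every element of $U_{\omega_1-\omega_4}$ already lies in $eU(1-e)$. Thus any $x\in I_{\omega_1-\omega_4}$ satisfies $x=exe\cdot\!(1-e)=ex(1-e)$, and by Lemma~\ref{symmetric} also $x=x^*=(1-e)xe$; but $ex(1-e)=(1-e)xe$ is only possible if $x=0$, since the two sides live in complementary Peirce components $eU(1-e)$ and $(1-e)Ue$. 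This immediately kills $I_{\omega_1-\omega_4}$, and the roots $\omega_i-\omega_4$, $\omega_4-\omega_i$ follow by the same orthogonality of Peirce corners.

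The main obstacle I anticipate is the root $\alpha=\omega_1-\omega_2$, precisely because $U_\alpha$ is \emph{not} contained in a single Peirce component: it straddles $eUe$ and $(1-e)Ue$, so the clean corner-orthogonality trick that dispatches $\omega_1-\omega_4$ does not apply directly. Here the real work is to show that the ``internal'' part $x'\in U'$ and the ``fourth-index'' part $x''\in u(A_{14})u(A_{42})$ cannot conspire to produce a nonzero symmetric element of the kernel. I expect this to require combining the injectivity of $\tau|_{U'}$ (from the completed $A_2$ case) with an annihilator computation showing $E_1 x''=x''$ while $E_1$ annihilates the relevant factors — mirroring the chain Lemma~\ref{pierce-component} $\to$ Lemma~\ref{x130} $\to$ Lemma~\ref{E1-zero} but now relative to the four-idempotent decomposition. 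Once both representative roots are handled, the conclusion $I_\alpha=(0)$ for all $\alpha\in\Delta$ follows by the index symmetry of the root system $A_3$.
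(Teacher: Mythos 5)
Your opening move --- killing the corner pieces $(1-e)xe$ and $ex(1-e)$ by combining Lemma~\ref{symmetric} with the orthogonality of the Peirce corners --- is exactly the first step of the paper's proof, and your reduction to two representative roots is harmless. But both of your cases then break down at the component $eUe$, which you never genuinely confront. For $\alpha=\omega_1-\omega_4$: from $u(A_{12}),u(A_{13})\subseteq eUe$ and Lemma~\ref{lemma19} you may conclude only $U_{\omega_1-\omega_4}\subseteq eU$, not $U_{\omega_1-\omega_4}\subseteq eU(1-e)$; the latter would require $u(A_{i4})e=(0)$, which is not known and in fact begs the question. Concretely, $u(a_{12})u(a_{24})e$ lies in $eUe\cap U_{\omega_1-\omega_4}$, and applying $\tau$ shows it lies in $I$ (its image is $a_{12}a_{24}(e_1+e_2+e_3)\oplus 0=0$ since $a_{12}a_{24}\in A_{14}$); such elements are precisely what the lemma must rule out, and the corner-orthogonality trick cannot see them. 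For $\alpha=\omega_1-\omega_2$: the decomposition $x=x'+x''$ with $x'\in U'$, $x''\in u(A_{14})u(A_{42})$ does not split inside $I$. Indeed $\tau(x'')=a_{14}a_{42}\oplus 0\in A_{12}\oplus 0\subseteq\tau(U')$, so $\tau(x')=-\tau(x'')$ can hold with both summands nonzero, and multiplying by $e$ does not separate them: $x'\in eUe$ while $ex''e$ need not vanish (again check with $\tau$). Hence the injectivity of $\tau|_{U'}$ cannot be applied to $x'$ alone, and your proposed repair via an $E_1$-type annihilator computation is acknowledged by you as unexecuted; it is not the paper's route.

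What is missing is the heart of the paper's argument: eliminating the diagonal pieces $e_ixe_i$, $1\leq i\leq 3$, of $x=exe$, where the involution gives no contradiction because a symmetric element may perfectly well sit in $e_iUe_i$. After disposing of $(1-e)xe$, $ex(1-e)$ and the off-diagonal $e_ixe_j$ ($i\neq j$) by the $*$-argument you also use, the paper writes $e_i=\sum_\mu a_\mu e_j b_\mu$ and $e_j=\sum_\nu c_\nu e_k d_\nu$ with factors in $u(L_{\pm(\omega_i-\omega_j)})$, resp.\ $u(L_{\pm(\omega_j-\omega_k)})$, so that $e_ixe_i$ becomes a sum of products whose partial products $e_ix(e_ia_\mu e_j)$ and $e_ix(e_ia_\mu e_j)(e_jc_\nu e_k)$ lie in off-diagonal Peirce corners of $I_\beta$ with $\beta=\alpha+\omega_i-\omega_j$, resp.\ $\beta=\alpha+\omega_i-\omega_k$; at least one of these degrees is nonzero, and such elements vanish either because $U_\beta=(0)$ for $\beta\notin\Delta\cup\{0\}$ or by the corner argument already established for arbitrary roots. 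Without this degree-shifting step (or a fully worked substitute), your proof does not close for either representative root.
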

\begin{proof}
	Suppose $0\neq x\in I_{\alpha}$. Then, by Lemma~\ref{lemma19}, $x=(1-e)xe+ex(1-e)+exe$. If $(1-e)xe\neq 0$,
	then $(1-e)xe\in I_{\alpha}$ and, therefore, $(1-e)xe=((1-e)xe)^*\in eU(1-e)$, a contradiction. Similarly, $ex(1-e)=0$, thus
	$x=exe$. We now identify an idempotent $e_i$, $1\leq i\leq 3$, with its image $e_i\oplus -e_i^{op}$ in $A\oplus A^{op}$
	and its image in $U'$. If $e_i xe_j\neq 0$, $i\neq j$, then  $e_i xe_j\neq (e_i xe_j)^*\in e_j U e_i$, a contradiction.
	Now, suppose $e_i x e_i\neq 0$. Let $1\leq i,j,k\leq 3$ be distinct. Since $e_i\in Ue_jU$, we have $e_i=\Sigma_{\mu} a_{\mu}e_j b_{\mu}$; $a_{\mu}, b_{\mu}\in u(L_{\omega_i-\omega_j})\cup u(L_{\omega_j-\omega_i})$. Similarly, 
	$e_j=\Sigma_{\nu} c_{\nu}e_k d_{\nu}$; $c_{\nu}, d_{\nu}\in u(L_{\omega_j-\omega_k})\cup u(L_{\omega_k-\omega_j})$.
	Thus, 
	$$e_i xe_i=\Sigma_{\mu,\nu} e_i x(e_i a_{\mu} e_j)(e_j c_{\nu} e_k)(e_k d_{\nu} e_j)(e_j b_{\mu} e_i).$$
	In each summand, $$e_i x(e_i a_{\mu} e_j)\in L_{\alpha +w_i - w_j}, \ e_i x(e_i a_{\mu} e_j)(e_j c_{\nu} e_k)\in L_{\alpha +w_i - w_j +w_j -w_k}=L_{\alpha +w_i - w_k}.$$  Hence, at least one of the elements $e_i x(e_i a_{\mu} e_j)$ or $e_i x(e_i a_{\mu} e_j)(e_j c_{\nu} e_k)$ belongs to $I_{\beta}$ for some $\beta\not= 0,$ and, therefore, must be equal to zero. This completes the proof of the lemma.
\end{proof}
Since the ring $U$ is generated by $\cup_{\alpha\in\Delta} U_{\alpha}$, if follows that $I\subseteq U_0$, $I\subseteq \operatorname{Ann}(U)$. Hence, $U\to A\oplus A^{op}$ is an annihilator extension. This completes the proof of Proposition~\ref{annihilator_extension}.
\begin{proof}[Proof of Theorem~\ref{main2}] Let $\varphi:[A,A]\to[B,B]$ be a Lie isomorphism. The Lie ring $[A,A]$ is $A_3$-graded and perfect; see Corollary \ref{Corollary_4+}. By Proposition~\ref{speciality},
	the homomorphism $\varphi:[A,A]\to B$ is a specialization. Let $u:[A,A]\to U$ be the universal specialization of the Lie ring 
	$[A,A]$. There exists a unique homomorphism $\psi: U\to B$ such that the diagram
	$$
	\xymatrix{[A,A]\ar[rr]^{u}\ar[rrd]_{\varphi}& &
		U\ar[d]^{\psi}
		\\ &&B}
	$$
	is commutative. Consider the specialization $\theta:[A,A]\to A\oplus A^{op}$. By the universality property, there exists a unique epimorphism 
	$\tau: U\to A\oplus A^{op}$ making the diagram 
	$$
	\xymatrix{[A,A]\ar[rr]^{u}\ar[rrd]_{\theta}& &
		U\ar[d]^{\tau}
		\\ &&A\oplus A^{op}}
	$$ 
	commutative. By Proposition~\ref{annihilator_extension}, $\tau$ is an annihilator extension. Let $v:\widehat{A\oplus A^{op}}\to A\oplus A^{op}$ be the universal annihilator extension of the ring $A\oplus A^{op}$.
	Then there exists a unique epimorphism $\chi_1: \widehat{A\oplus A^{op}}\to U$ making the diagram
	$$
	\xymatrix{\widehat{A\oplus A^{op}}\ar[rr]^{\chi_1}\ar[rrd]_{v}& &
		U\ar[d]^{\tau}
		\\ &&A\oplus A^{op}}
	$$ 
	commutative. Let $\zeta:\widehat{[A,A]}\to [A,A]$ be the universal central extension of the Lie ring $[A,A]$. We lift the homomorphism
	$\varphi$ to the homomorphism $\widehat{\varphi}: \widehat{[A,A]}\to B$, i.e., $\widehat{\varphi}=\varphi \circ \zeta.$  As  mentioned in the Introduction (see \eqref{gamma}), there
	exists a natural homomorphism
	$\gamma: \widehat{ [A\oplus A^{op},A\oplus A^{op}] } \to \widehat{A\oplus A^{op}}$ making the following diagram commutative:
	$$
	\xymatrix{\widehat{[A\oplus A^{op},A\oplus A^{op}]}\ar[rr]^{\qquad\gamma}\ar[rrd]& &
		\widehat{A\oplus A^{op}}\ar[d]
		\\ &&A\oplus A^{op}}
	$$ 
	 If $L_1\to L_2$ is an embedding of perfect Lie rings, then there exists a homomorphism 
	$\widehat{L_1}\to \widehat{L_2}$ making the  following diagram commutative:
	$$
	\xymatrix{\widehat{L_1}\ar[rr]\ar[d]& & \widehat{L_2}\ar[d]
		\\ L_1\ar[rr]&&L_2}
	$$ 
	 We fix the homomorphism $\xi:\widehat{[A,A]}\to\widehat{[A\oplus A^{op}, A\oplus A^{op}]}$, and define $\chi=\chi_1 \circ \psi$. Now, the 
	diagram
	$$
	\xymatrix{\widehat{[A,A]}\ar[r]^{\xi \qquad \quad}\ar[rd]_{\widehat{\varphi}} &\widehat{[A\oplus A^{op}, A\oplus A^{op}]}\ar[r]^{\qquad \gamma} &
		\widehat{A\oplus A^{op}}\ar[ld]_{\chi}\ar[r]^{\quad \chi_1}& U\ar[lld]^{\psi}
		\\ &B & &}
	$$ completes the proof of the existence of $\chi.$
    
Now, we prove the uniqueness of the homomorphism $\chi$. Suppose there exists   another homomorphism $\chi' :\widehat{A\oplus A^{op}}\to B$ that also makes the  following diagram commutative: 
$$
	\xymatrix{\widehat{[A,A]}\ar[rr]\ar[rrd]_{\widehat{\varphi}}& &
		\widehat{A\oplus A^{op}}\ar[d]^{\chi'}
		\\ &&B}
	$$ 
Let $\widehat{A_{ij}},$ for $i\not= j,$ be the preimage of $A_{ij}$ in $\widehat{[A,A]}.$ Let $\widehat{A_{ij}}'$ be the image of $\widehat{A_{ij}}$ in   $\widehat{A\oplus A^{op}}.$ By the commutativity of the diagrams, the homomorphisms $\chi$ and $\chi'$ coincide on  $\widehat{A_{ij}}'.$  Now, it remains to observe that the subspaces $\widehat{A_{ij}}',$ for $i\not= j,$ generate  $\widehat{A\oplus A^{op}}.$ This completes the proof of   Theorem~\ref{main2}.
\end{proof}
\begin{proof}[Proof of the Corollary~\ref{corollary-main2}]
	Suppose that $\varphi: [A,A]\to [B,B]$ is a Lie isomorphism and that $\operatorname{Ann}(\langle[B,B]\rangle)=(0)$.
	By Theorem~\ref{main2}, there exists a homomorphism $\chi:\widehat{A\oplus A^{op}}\to B$ making the following diagram commutative:
	$$
	\xymatrix{\widehat{[A,A]}\ar[rr]\ar[rrd]_{\widehat{\varphi}}& &
		\widehat{A\oplus A^{op}}\ar[d]^{\chi}
		\\ &&B}
	$$ 
	Clearly, $\chi(\operatorname{Ann}(\widehat{A\oplus A^{op}}))\subseteq \operatorname{Ann}(\langle[B,B]\rangle)=(0)$. Hence, $\chi$ is, in fact, a homomorphism $A\oplus A^{op}\to B$. Thus, $\varphi$ extends to a  standard homomorphism. This completes the proof of the corollary.
\end{proof}
\begin{Rem}\label{rem-ann} Suppose that a ring $A$ satisfies the assumptions of Theorem~\ref{main} or Theorem~\ref{main2},
	and in the latter case, that $\operatorname{Ann}(A)=(0)$. Then any automorphism of the Lie ring $[A,A]$
	extends to standard one. Indeed, we only need to show that $\operatorname{Ann}(A)=(0)$ implies $\operatorname{Ann}(\langle[A,A]\rangle)=(0)$. If $x\in A$ and $x[A,A]=(0)$, then $x A_{ij}=(0)$ for any $i\neq j$. Now, it remains to recall that the subset
	$\cup_{i\neq j} A_{ij}$ generates the ring $A$.
\end{Rem}
\begin{proof}[Proof of Theorem~\ref{main3}]
	Let $A$ be a ring satisfying the assumptions of Theorem~\ref{main} or Theorem~\ref{main2}, and in the 
	latter case, assume $\operatorname{Ann}(A)=(0)$. Let $d:[A,A]\to[A,A]$ be a derivation. Consider the ring $A'=A+A\varepsilon$,
	$\varepsilon^2=0$. This ring satisfies all the assumptions imposed on $A$. We have 
	$[A',A']=[A,A]+[A,A]\varepsilon$.
	For an element $a+b\varepsilon\in[A',A']$, where $a,b\in[A,A]$, define
	$f(a+b\varepsilon)=a+(d(a)+b)\varepsilon$. It is easy to see that $f:[A',A']\to [A',A']$ is a Lie automorphism. In view of the Remark~\ref{rem-ann}, the automorphism $f$ extends to a standard homomorphism $\psi_1-\psi_2$, where $\psi_1: A'\to A'$ is a homomorphism, and 
	$\psi_2: A'\to A'$ is an anti-homomorphism, with the property  $\psi_1(A')\psi_2(A')=\psi_2(A')\psi_1(A')=(0)$. 
	\begin{lem} Any element $a\in A$ can be represented as a sum of products 
		\begin{equation}\label{decomposition}
			a=\sum_{i} a_{i,1}\cdots a_{i,n_i}, 
		\end{equation}
		where $a_{ij}\in[A,A]$, all integers $n_i$ are even, and $$\sum_i a_{i,n_i}\dots a_{i,1}=0.$$
	\end{lem}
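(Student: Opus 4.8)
The plan is to show that the set $T$ of all elements $a\in A$ admitting a decomposition of the form \eqref{decomposition} — with all factors in $[A,A]$, all lengths $n_i$ even, and the reversed sum $\sum_i a_{i,n_i}\cdots a_{i,1}$ equal to zero — is a subring of $A$ that contains every off-diagonal Peirce space $A_{ij}$, $i\neq j$. Since $\cup_{i\neq j}A_{ij}$ generates $A$ (Remark~\ref{rem-ann}), this forces $T=A$, which is exactly the assertion.

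Conceptually this is the statement that $a\oplus 0$ lies in the subring of $A\oplus A^{op}$ spanned by even-length products $\theta(b_1)\cdots\theta(b_{2k})$ of elements $\theta(b)=b\oplus(-b^{op})$, $b\in[A,A]$. Indeed, a direct computation gives $\theta(b_1)\cdots\theta(b_n)=(b_1\cdots b_n)\oplus(-1)^n(b_n\cdots b_1)^{op}$, so for even $n$ the $A^{op}$-component records precisely the reversed product, and demanding that it vanish is the ``reversed sum equals zero'' condition. This viewpoint makes the subring claim transparent and ties the lemma to the generation lemma of Section~4.1.

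First I would verify that $T$ is closed under the ring operations. Additivity is immediate: concatenating two decompositions adds the reversed sums. The key step is multiplicativity: given $a=\sum_i a_{i,1}\cdots a_{i,n_i}\in T$ and $b=\sum_j b_{j,1}\cdots b_{j,m_j}\in T$, the products $a_{i,1}\cdots a_{i,n_i}b_{j,1}\cdots b_{j,m_j}$ have even length $n_i+m_j$ and express $ab$, while the reversed sum factors as
\[
\sum_{i,j}b_{j,m_j}\cdots b_{j,1}\,a_{i,n_i}\cdots a_{i,1}
=\Big(\sum_j b_{j,m_j}\cdots b_{j,1}\Big)\Big(\sum_i a_{i,n_i}\cdots a_{i,1}\Big)=0.
\]
Thus the vanishing of the reversed sum is preserved under multiplication — this is the heart of the argument. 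For the base case I would produce the off-diagonal spaces: for $i\neq k$ choose an index $j\notin\{i,k\}$ (available since there are at least three idempotents), and use $A_{ik}=A_{ij}A_{jk}$ to write $a_{ik}=\sum_\mu x^{(\mu)}_{ij}y^{(\mu)}_{jk}$ with $x^{(\mu)}_{ij}\in A_{ij}\subseteq[A,A]$ and $y^{(\mu)}_{jk}\in A_{jk}\subseteq[A,A]$. These are length-two products, and the reversed sum $\sum_\mu y^{(\mu)}_{jk}x^{(\mu)}_{ij}$ lands in $A_{jk}A_{ij}=e_jAe_ke_iAe_j=(0)$ because $e_ke_i=0$. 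Hence $A_{ij}\subseteq T$ for all $i\neq j$.

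Combining these, $T$ is a subring containing all off-diagonal Peirce spaces; since the diagonal spaces satisfy $A_{ii}=A_{ij}A_{ji}$ (by fullness of $e_j$) they too lie in $T$, and as $\cup_{i\neq j}A_{ij}$ generates $A$ we conclude $T=A$. The main obstacle to watch is exactly the reversed-sum bookkeeping: a naive decomposition of a diagonal element via $A_{ii}=A_{ij}A_{ji}$ has reversed products landing in $A_{jj}$ and need not vanish, so one cannot build diagonal elements directly from a single product. The subring-closure argument is what circumvents this, producing diagonal elements as products of two already-admissible off-diagonal elements and letting the multiplicative preservation of the reversed-sum-zero property do the work.
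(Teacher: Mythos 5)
Your proposal is correct and follows essentially the same route as the paper: the paper also introduces the set (there called $A^{even}$) of elements admitting such decompositions, notes it is a subring, and places each off-diagonal $A_{ij}$ in it via $A_{ij}=A_{ik}A_{kj}$ with the reversed products $c_{\mu}b_{\mu}$ vanishing by orthogonality of the idempotents, concluding by generation of $A$ from $\cup_{i\neq j}A_{ij}$. The only difference is presentational: you spell out the multiplicativity of the reversed-sum-zero condition (and the $A\oplus A^{op}$ interpretation), which the paper leaves as ``easy to see.''
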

	\begin{proof}
		It is easy to see that the set $A^{even}$ of elements that can be represented in this way is a subring of $A$. Let $1\leq i,j,k\leq 4$ be distinct integers. Then, since $A_{ij}=A_{ik} A_{kj}$, an arbitrary element $a\in A_{ij}$ can be represented as a sum $a=\Sigma_{\mu} b_{\mu} c_{\mu}$, 
		$b_{\mu}\in A_{ik}$, $c_{\mu}\in A_{kj}$. Clearly, for each $\mu,$ we have $c_{\mu} b_{\mu}=0$. Thus, we have shown that $A_{ij}\subseteq A^{even}$. Since the set
		$\cup_{i\neq j} A_{ij}$ generates the ring $A$, the assertion of the lemma follows. \end{proof}
	Now, let us extend the Lie derivation $d:[A,A]\to[A,A]$ to a 
	derivation of the ring $A$. For an arbitrary element $a\in A,$ consider a presentation  of the form \eqref{decomposition}. Define
	\begin{equation}\label{tilded}
		\tilde{d}(a)=\sum_i(d(a_{i,1}) a_{i,2}\cdots a_{i, n_i}+a_{i,1}d(a_{i,2})a_{i,3}\cdots a_{i, n_i}+\cdots+a_{i,1}\cdots a_{i,n_i-1} d(a_{i,n_i})).
	\end{equation}
	Since a presentation of the form \eqref{decomposition} is not unique, we need to show that $$\sum_i a_{i,1}\cdots a_{i,n_i}=\sum_{i} a_{i,n_i}\cdots a_{i,1}=0,$$ when
	$a_{ij}\in[A,A]$ and all integers $n_i$ are even, implies that the expression \eqref{tilded} is equal to zero. This ensures that \eqref{tilded}  is well-defined. It is sufficient 
	to show that $\Sigma_i f(a_{i,1})\cdots f(a_{i, n_i})=0$. We have:
	$$\sum_i f(a_{i,1})\cdots f(a_{i,n_i})=\sum_i (\psi_1(a_{i,1})-\psi_2(a_{i,1}))\cdots(\psi_1(a_{i,n_i})-\psi_2(a_{i, n_i}))=$$ $$=\sum_i \psi_1(a_{i,1})\cdots\psi_1(a_{i,n_i})+\sum_i \psi_2(a_{i,1})\cdots\psi_2(a_{i,n_i})=$$ $$=\psi_1\big(\sum_i a_{i,1}\cdots a_{i,n_i}\big)+\psi_2\big(\sum_i a_{i,n_i}\cdots a_{i,1}\big)=0.
	$$
	Thus, the mapping $\tilde{d}$ is well-defined. From the definition, it immediately follows that $\tilde{d}$
	is a derivation of the ring $A$. Now, choose an element $a\in A_{ij}$, where $i\neq j$. Let $k\neq i,j$. Since $A_{ij}=[A_{ik}, A_{kj}]$, we can write 
	$$a=\sum_{\mu}[b_{\mu},c_{\mu}]=\sum_{\mu} b_{\mu} c_{\mu}-\sum_{\mu} c_{\mu} b_{\mu}, \quad \text{where} \quad b_{\mu}\in A_{ik}, \quad c_{\mu}\in A_{kj}.$$
	Thus, this is a presentation of the type \eqref{decomposition}. Hence, applying $\tilde{d},$ we obtain 
	$$
	\tilde{d}(a)=\sum_{\mu} d(b_{\mu}) c_{\mu}+\sum_{\mu} b_{\mu} d(c_{\mu})-\sum_{\mu} d(c_{\mu}) b_{\mu}-\sum_{\mu} c_{\mu} d(b_{mu})=
	d(a).
	$$
	Therefore, the restriction of the derivation $\tilde{d}$ to $[A,A]$ coincide with $d$. This completes the proof of Theorem~\ref{main3}.
\end{proof}

\begin{Rem}\label{rem-last} Theorems~\ref{main}, ~\ref{main2}, and ~\ref{main3} remain valid for algebras over arbitrary fields. To prove this, it suffices to replace  every occurrence of the word ``ring'' by the word ``algebra'' in the proofs of these theorems.
\end{Rem}

\section{An Example of a Non-Standard Lie Automorphism}\label{section5}

In this section, we construct an associative unital ring $A$ with full orthogonal idempotents $e_1,e_2\in A$, satisfying $e_1+e_2=1$, that extends to a non-standard automorphism of the Lie ring $[A,A]$. More precisely, we construct a derivation $d:[A,A]\to [A,A]$ that does not extend to a derivation 
of the ring $A$. 

As before, we consider the ring $A' =A+A \varepsilon, $ $\varepsilon^2 =0.$ Then, the mapping $\text{Id}+\varepsilon d$, where $\text{Id}$ is the identity mapping, define an  automorphism of the Lie ring $[A,A]$. In the proof of Theorem~\ref{main3}, it was shown that if $d$ does not extend to a derivation of the ring $A,$ then   $\text{Id}+\varepsilon d$ does not extend to a  standard homomorphism. 

Let $\mathbb{F}$ be a ground field with $\operatorname{char} \mathbb{F}\neq 2$. Let $B$ be an associative $\mathbb{F}$-algebra. A linear mapping $d: B\to B$ is called a {\it Jordan derivation} if it satisfies the identity 
$$d(ab+ba)=d(a)b+bd(a)+a d(b)+d(b)a \quad \text{for arbitrary elements} \quad  a,b\in B.$$ 
\begin{ex} Let $B$ be a unital associative $\mathbb{F}$-algebra given  by generators $e_i$, where $i\geq 1$, and $z$, subject to the relations $e_i e_j+e_j e_i=0$, $ze_i=e_i z$,
	$z^2=0$, $ze_i e_j e_k=0$ for arbitrary $i,j,k$.  It is easy to see that the elements $1$; $e_{i_1}\cdots e_{i_k}$, $1\leq i_1<\cdots< i_k$; $z$;
	$ze_i$, $i \geq 1$; $ze_i e_j$, $1 \leq i<j$, form a basis of the algebra $B$. Define the linear transformation $d: B\to B$ as follows: $d(1)=d(z)=0$, $d(e_i)=ze_i$,
	$d(e_{i_1}\cdots e_{i_k})=d(ze_i)=d(ze_i e_j)=0$, for $k\geq 2$. A straightforward verification shows that $d$ is a Jordan derivation, though it is not a derivation of the algebra $B.$
	
	Now, let $A=M_2(B)$ be the algebra of $2\times 2$ matrices over $B$. Then 
	$$
	[A,A]=\left\{x=\left(\begin{array}{cc} \Sigma_i x_i y_i& b\\ c& -\Sigma_i y_i x_i\end{array}\right) \,\Big|\, x_i,y_i,b,c\in B \right\}.
	$$
	Define the mapping
	$$
	\bar{d}(x)=\left(\begin{array}{cc} \Sigma_i d(x_i) y_i+\Sigma_i x_i d(y_i)& d(b)\\ d(c)& -\Sigma_i d(y_i) x_i-\Sigma_i y_i d(x_i)\end{array}\right).
	$$
	To show that the mapping $\bar{d}$ is well-defined, we need to verify that if $$\sum_i x_i y_i=\sum_i y_i x_i=0, \quad x_i, y_i\in B, \quad \text{then} \quad \sum_i d(x_i) y_i+\sum_i x_i d(y_i)=0.$$ Since $d(zB)=zd(B)=(0)$, without loss of generality, we assume that $x_i$ and $y_i$ are
	products of Grassmann generators $e_1, e_2,\dots$.  Let $l(x_i)$ and $l(y_i)$ denote the lengths of these products. Without loss of generality,
	 assume that $s=l(x_i)+l(y_i)$ does not depend on $i$. If $s\geq 3$, then $d(x_i) y_i=x_i d(y_i)=0$ for every $i$. Now, consider the case $s=2$. If $x_i=1$
	or $y_i=1$, then again $d(x_i)y_i=x_i d(y_i)=0$. It remains to consider the case when $x_i$ and $y_i$ are Grassmann variables. In this case,
	$$d(x_i)y_i+x_i d(y_i)=2zx_i y_i, \quad \sum_i d(x_i) y_i+\sum_i x_i d(y_i)=2z\sum_i x_i y_i=0.$$  Thus, we have shown that the mapping $\bar{d}$
	is well-defined. It is straightforward to verify that $\bar{d}$ is a derivation of the Lie algebra $[A,A]$. Let us now show that $\bar{d}$ does not extend 
	to a derivation of the algebra $A$. Consider the elements
	$$
	X=\left(\begin{array}{cc} e_1 & 0\\ 0& -e_1 \end{array}\right), \ Y=\left(\begin{array}{cc} 0 & e_2\\ 0& 0 \end{array}\right) .$$ These lie in  $ [A,A],$ and their product is $$ XY=\left(\begin{array}{cc} 0 & e_1e_2\\ 0& 0 \end{array}\right) \ \in \ [A,A].
	$$
	Now, applying $\bar{d}$, we obtain
     $$ \bar{d}(X)Y+X\bar{d}(Y)= \left(\begin{array}{cc} 0 & ze_1e_2\\ 0& 0 \end{array}\right) + \left(\begin{array}{cc} 0 & ze_1e_2\\ 0& 0 \end{array}\right)=\left(\begin{array}{cc} 0 & 2ze_1e_2\\ 0& 0 \end{array}\right)\neq 0.$$ However, $$   \bar{d} \left(\begin{array}{cc} 0 & e_1e_2\\ 0& 0 \end{array}\right)=0.$$  Thus, $\bar{d}$ does not extend to a derivation of the algebra $A.$ 
	Observe that the algebra $A=M_2(B)$ contains two orthogonal full idempotents: $$ \left(\begin{array}{cc} 1 & 0\\ 0& 0 \end{array}\right) \quad \text{and} \quad   \left(\begin{array}{cc} 0 &0\\ 0& 1 \end{array}\right).$$
\end{ex}
\begin{Rem} For a simpler example of a non-standard Lie derivation and a non-standard Lie automorphism in the matrix algebra $M_2 (\mathbb{F}),$ where $\operatorname{char} \mathbb{F}=2;$ see \cite{Bresar1}, Example 6.11.

\end{Rem}
\begin{Rem} Since Theorem~\ref{main3} follows from  Theorem~\ref{main}, the above example demonstrates  that Theorem~\ref{main} does not hold for rings containing only two orthogonal full idempotents. 
\end{Rem}

\section{Acknowledgments} 
The authors thank Matej Bre\v{s}ar for helpful discussions. 

The first author' work was partially supported by MES of Ukraine: Grant for the perspective development of the scientific direction “Mathematical sciences and natural sciences” at TSNUK. The second and third authors' work was partially supported by the NSFC Grant 12350710787.
The second author acknowledges financial support from Guangdong Basic and Applied
Basic Research Foundation grant 2024A1515013079.

\end{document}